\newcommand{\weg}[1]{}
\newcommand{\C}{{\mathbb{C}}}
\newcommand{\R}{{\mathbb{R}}}
\newcommand{\Z}{{\mathbb{Z}}}
\newcommand{\N}{{\mathbb{N}}}
\newcommand{\K}{{\mathfrak{K}}}
\newcommand{\cfrak}{{\mathfrak{c}}}
\newcommand{\cbar}{{\overline{\mathfrak{c}}}}
\newcommand{\kfrak}{{\mathfrak{uc}}}
\newcommand{\kbar}{{\overline{\mathfrak{uc}}}}
\newcommand{\FC}{{\mathcal{O}(V,\mathcal{F})}}
\newcommand{\fol}{{(V,\mathcal{F})}}
\DeclareMathOperator{\id}{id}
\DeclareMathOperator{\im}{im}
\DeclareMathOperator{\supp}{supp}
\theoremstyle{theorem}
\newtheorem{thm}{Theorem}[section]
\newtheorem{lem}[thm]{Lemma}
\newtheorem{prop}[thm]{Proposition}
\newtheorem{cor}[thm]{Corollary}
\newtheorem{axiom}[thm]{Axiom}
\newtheorem*{mainthm1}{Theorem \ref{mainthm}}
\newtheorem*{mainthm2}{Theorem \ref{mainthmcoeff}}
\theoremstyle{definition}
\newtheorem{example}[thm]{Example}
\newtheorem{mdef}[thm]{Definition}
\newtheorem*{acknow}{Acknowledgements}
\theoremstyle{remark}
\newtheorem*{msc}{Mathematics Subject Classification (2010)}
\newtheorem*{keywords}{Keywords}
\title{Coarse co-assembly as a ring homomorphism}
\author{Christopher Wulff}
\date{} 
\begin{document}

\maketitle

\begin{abstract}
The $K$-theory of the stable Higson corona of a coarse space carries a canonical ring structure. This ring is the domain of an unreduced version of the coarse co-assembly map of Emerson and Meyer. 
We show that the target also carries a ring structure and co-assembly is a ring homomorphism, provided that the given coarse space is contractible in a coarse sense.
\end{abstract}

\begin{msc}
19K35, 46L80.
\end{msc}
\begin{keywords}
Coarse geometry, co-assembly, ring structures.
\end{keywords}

\section*{Introduction}
Let $X$ be a countably generated coarse space, e.\,g.\ a proper metric space.
Emerson and Meyer constructed in \cite{EmeMey} a coarse co-assembly map 
$$\mu^*:\tilde K_{1-*}(\cfrak(X))\to KX^*(X)$$
which is dual to the usual coarse assembly map (cf.\ \cite{HigRoe,RoeITCGTM})
$$\mu: KX_*(X)\to K_*(C^*(X))$$
in the sense that there are natural pairings 
$$KX^*(X)\times KX_*(X)\to \Z,\qquad \tilde K_{1-*}(\cfrak(X))\times K_*(C^*X)\to \Z$$
such that 
$$\langle x,\mu(y)\rangle=\langle \mu^*(x),y\rangle\qquad \forall x\in\tilde K_{1-*}(\cfrak(X)),y\in KX_*(X).$$
The domain of the coarse co-assembly map is the reduced $K$-theory of the stable Higson corona $\cfrak(X)$, which is the $C^*$-algebra of all continuous functions of vanishing variation on $X$ into the $C^*$-algebra of compact operators $\K$ modulo the ideal $C_0(X,\K)$.

At first one would expect $K^*(C^*X)$ -- the $K$-homology of the Roe algebra $C^*X$ -- as the domain of $\mu^*$. However, it is not even clear how to define this  group, because $C^*X$ lacks separability. In contrast $\tilde K_{1-*}(\cfrak(X))$ behaves in many ways as expected. For instance we have functoriality under coarse maps and $\mu^*$ is an isomorphism for scalable spaces which are uniformly contractible and have bounded geometry.

This paper is concerned with an unreduced version of the coarse co-assembly map,
$$\mu^*:K_{*}(\cfrak(X))\to KX^{1-*}(X\setminus\{pt\}),$$
which we derive from the co-assembly map of Emerson and Meyer.
The right hand side is by definition the $K$-theory of the Rips complex $\mathcal{P}$ of $X$ with one point $pt\in\mathcal{P}$ removed.

There is a canonical graded ring structure on the left hand side given by multiplication of functions in $\cfrak(X)$ by means of a $*$-homomorphism $\K\otimes\K\cong \K$. It is a natural question to ask whether there is also a secondary ring structure
$$KX^{i}(X\setminus\{pt\})\otimes  KX^{j}(X\setminus\{pt\})\to  KX^{i+j-1}(X\setminus\{pt\})$$
 on the right hand side such that $\mu^*$ becomes a ring homomorphism.
The usual primary ring structure in $K$-theory,
 $$KX^{i}(X\setminus\{pt\})\otimes  KX^{j}(X\setminus\{pt\})\to  KX^{i+j}(X\setminus\{pt\}),$$
 is obviously not suitable because of degree reasons.

The most enlightening example is the open cone $X=\mathcal{O}Y$ over a nice compact base space $Y$. In this case, the topological side can be identified with $K^{-*}(Y)$ and co-assembly becomes a ring isomorphism
$$K_*(\cfrak(X))\cong K^{-*}(Y).$$

It turns out that an  appropriate prerequisite  for the existence of a secondary ring structure in the general case  is the notion of coarse contractibility which we introduce in Definition  \ref{coarsecontraction}. Open cones over compact spaces are coarsely contractible in this sense. So are  foliated cones as defined in \cite{RoeFoliations}, $\operatorname{CAT}(0)$ spaces and hyperbolic metric spaces, in particular Gromov's hyperbolic groups.

Given a coarse contraction, we obtain a suitable contraction $H:\mathcal{P}\times[0,1]\to\mathcal{P}$ onto the point $pt$ which we use to construct the proper continuous map
\begin{align*}
\Gamma:\,(0,1)\times (\mathcal{P}\setminus\{pt\})&\to (\mathcal{P}\setminus\{pt\})\times (\mathcal{P}\setminus\{pt\})
\\(t,x)&\mapsto\begin{cases}(H(x,1-2t),x)&t\leq1/2\\(x,H(x,2t-1))&t\geq1/2.\end{cases}
\end{align*}
Given this data, the secondary product on $KX^{*}(X\setminus\{pt\}):=K^*(\mathcal{P}\setminus\{pt\})$ is defined as $(-1)^i$ times the composition
\begin{align*}
K^i(\mathcal{P}\setminus\{pt\})\otimes K^j(\mathcal{P}\setminus\{pt\})&\xrightarrow[\hspace{2.5ex}]{}
K^{i+j}(\,(\mathcal{P}\setminus\{pt\})\times(\mathcal{P}\setminus\{pt\})\,)
\\&\xrightarrow[\hspace{2.5ex}]{\Gamma^*} K^{i+j}(\,(0,1)\times (\mathcal{P}\setminus\{pt\})\,)
\\&\xrightarrow[\hspace{2.5ex}]{\cong} K^{i+j-1}(\mathcal{P}\setminus\{pt\}).
\end{align*}
Note that this secondary product arises like most secondary products by comparing two different reasons for the vanishing of the primary product.
%$$KX^{i}(X\setminus\{pt\})\otimes  KX^{j}(X\setminus\{pt\})\to  KX^{i+j}(X\setminus\{pt\}).$$ 
Here the two reasons are the homotopies $\Gamma|_{(0,1/2]\times (\mathcal{P}\setminus\{pt\})}$ and  $\Gamma|_{[1/2,1)\times (\mathcal{P}\setminus\{pt\})}$ from the diagonal embedding to ``infinity''.

Our main result is the following.
\begin{mainthm1}
Let $X$ be a coarsely contractible countably generated coarse space. Then the unreduced coarse co-assembly map 
$$\mu^*:K_{*}(\cfrak(X))\to KX^{1-*}(X\setminus\{pt\})$$
is a ring homomorphism.
\end{mainthm1}

There is also a more general coarse co-assembly map with coefficients in a $C^*$-algebra $D$. 
The stable Higson corona $\cfrak(X;D)$ with coefficients in $D$ is defined just like $\cfrak(X)$ with the only difference that we consider functions $X\to D\otimes\K$. Furthermore, if we define
$$KX^{*}(X\setminus\{pt\};D):=K_{-*}(C_0(\mathcal{P}\setminus\{pt\})\otimes D)$$
then there are products
\begin{align*}
K_i(\cfrak(X;D))\otimes K_j(\cfrak(X;E))&\to K_{i+j}(\cfrak(X;D\otimes E))
\\KX^{i}(X\setminus\{pt\};D)\otimes KX^{j}(X\setminus\{pt\};E)&\to KX^{i+j-1}(X\setminus\{pt\};D\otimes E)
\end{align*}
which are interior products with respect to the space but exterior products with respect to the coefficient algebra. Co-assembly also respects these products.
\begin{mainthm2}
Let $X$ be a coarsely contractible countably generated coarse space. Then the unreduced coarse co-assembly map with coefficients in $D$,
$$\mu^*:K_{*}(\cfrak(X;D))\to KX^{1-*}(X\setminus\{pt\};D)\,,$$
is multiplicative.
\end{mainthm2}
In fact, using the stabilized version of the Higson corona is not necessary. Everything works just as well for the usual Higson corona or the unstabilized Higson corona with coefficients. 
However, the work of Emerson and Meyer indicates that it is the stabilized versions which one needs to consider in applications.

Finally, it should be noted that some conjectures of Roe in \cite{RoeFoliations} may be reformulated and proved by replacing the groups $K^*_{FJ}(V/\mathcal{F}):=K^{*+1}(C^*(\FC^\to))$ defined therein by the rings $K^*(\cfrak(\FC))$, where the coarse space $\FC$ is a foliated cone constructed from the foliation $\fol$. This will be the topic of a forthcoming paper.

This paper is organized as follows. Section \ref{sec:CoarseGeometry} recalls basic notions of coarse geometry and introduces Higson type corona $C^*$-algebras.
In the subsequent three sections \ref{sec:sigmaspaces}--\ref{sec:KTheory} we give an account of the basic facts about $\sigma$-coarse and $\sigma$-locally compact spaces as well as $\sigma$-$C^*$-algebras and their $K$-theory. This is necessary as the Rips complex is not a locally compact space. Properties of $K$-theory of $\sigma$-$C^*$-algebras concerning the products which cannot be found in the literature are proven in Appendices \ref{sec:Frechet}, \ref{sec:kkTheory}.

In a nutshell, sections \ref{sec:sigmaspaces}--\ref{sec:KTheory} say that these notions  behave very much like their non-$\sigma$-counterparts. Thus, the impatient reader should get away with skimming over these sections. 

In Section \ref{sec:coassembly} we review the definition of the Rips complex and the concept of coarse co-assembly before deriving unreduced versions from the reduced ones. The $K$-theory product of the stable Higson corona is introduced in Section \ref{sec:coronaringstructure} where we also establish co-assembly as ring isomorphism in the case of open cones.
The secondary ring structure on the target is defined in Section \ref{sec:secondaryproduct}, where we also prove basic properties of this product. Multiplicativity  of the co-assembly map is proven in Section \ref{sec:multiplicativity}. Finally, we introduce the notion of coarse contractibility as a prerequisite for $\sigma$-contractibility of the Rips complex in Section \ref{sec:coarsecontractability}.

One last comment: in this paper we shall use the symbol $\otimes$ for the \emph{maximal} tensor product of $\sigma$-$C^*$-algebras.

\begin{acknow}
This paper represents part of the research project for a doctoral dissertation at the University of Augsburg.
The author would like to thank his thesis advisor Bernhard Hanke for his steady encouragement and advice.
Furthermore, the author is grateful to Alexander Engel for his comments on a first draft of this paper and useful discussions.
Part of this work was carried out during a research stay at the Max Planck Institute for Mathematics in Bonn, whose hospitality is gratefully acknowledged.
The doctoral project was supported by a grant of the Studienstiftung des Deutschen Volkes.
\end{acknow}

\tableofcontents

%%%%%%%%%%%%%%%%%%%%%%%%%%%%%%%%%%%%%%%%%%%%%%%%%%%%%%%%

\section{Coarse geometry and coronas}\label{sec:CoarseGeometry}
The purpose of this first section is to recall some basics of coarse geometry and Higson corona $C^*$-algebras.
Less surprisingly, as far as coarse geometry is concerned, we shall stick quite closely to the very concise presentation of this topic in \cite[Section 2]{EmeMey}. 
More comprehensive references to coarse geometry are \cite{RoeCoarseGeometry} and \cite[Chapter 6]{HigRoe}.

\begin{mdef}[{\cite[Definition 1]{EmeMey}}]
A \emph{coarse structure} on a set $X$ is a collection $\mathcal{E}$ of subsets of $X\times X$, called controlled sets or entourages, which contains the diagonal and is closed under the formation of subsets, inverses, products and finite unions and contains all finite subsets, i.\,e.\ 
\begin{enumerate}\itemsep0pt\parskip0pt 
\item $\Delta_X:=\{(x,x)\,|\,x\in X\}\in \mathcal{E}$;
\item if $E\in\mathcal{E}$ and $E'\subset E$, then $E'\in\mathcal{E}$;
\item if $E\in\mathcal{E}$, then $E^{-1}:=\{(y,x)\,|\,(x,y)\in E\}\in\mathcal{E}$;
\item if $E_1,E_2\in\mathcal{E}$, then 
$$E_1\circ E_2:=\{(x,z)\,|\,\exists y\in X:(x,y)\in E_1\text{ and } (y,z)\in E_2\}\in \mathcal{E};$$
\item if $E_1,E_2\in\mathcal{E}$, then $E_1\cup E_2\in \mathcal{E}$;
\item if $E\subset X\times X$ is finite, then $E\in\mathcal{E}$.
\end{enumerate}
A subset $B\subset X$ is called \emph{bounded}, if $B\times B\in \mathcal{E}$.
A \emph{coarse space} %\footnote{In the terminology of  \cite{RoeCoarseGeometry} this would be called a locally compact topological space equipped with a coarsely connected proper coarse structure.}
 is a locally compact space $X$ equipped with a coarse structure $\mathcal{E}$ such that in addition
\begin{enumerate}\itemsep0pt\parskip0pt \setcounter{enumi}{6}
\item  some neighborhood of the diagonal $\Delta\subset X\times X$ is an entourage;
\item every bounded subset of $X$ is relatively compact.
\end{enumerate}

If $X$ is a coarse space and $A\subset X$, then the \emph{subspace coarse structure} is the set of all subsets of $A\times A$ which are entourages of $X$. If $A$ is closed in $X$, then $A$ is a coarse space, too.

If $X,Y$ are coarse spaces with coarse structures $\mathcal{E}_X,\mathcal{E}_Y$, then the \emph{product coarse structure} on $X\times Y$ consists of all subsets of $(X\times Y)\times (X\times Y)$ which are contained in 
$$E_X\times E_Y:=\{(x_1,y_1,x_2,y_2)\,|\,(x_1,x_2)\in E_X,\,(y_1,y_2)\in E_2\}$$
for some $E_X\in\mathcal{E}_X$, $E_Y\in \mathcal{E}_Y$. Again, $X\times Y$ is a coarse space.

A coarse space is called \emph{countably generated} if there is an increasing sequence $(E_n)_{n\in\N}$ of entourages such that any entourage is contained in $E_n$ for some $n\in\N$.
\end{mdef}
\begin{example}
Let $(X,d)$ be a metric space in which every bounded set is relatively compact. The \emph{metric coarse structure} on $X$ is the smallest coarse structure such that the sets 
$$E_R:=\{(x,y)\in X\times X\,|\,d(x,y)\leq R\},\quad R\in \N$$
are entourages. Equipped with this coarse structure, $X$ becomes a countably generated coarse space.
\end{example}
\begin{mdef}
A \emph{coarse map} $\phi:X\to Y$ between coarse spaces is a Borel map such that $\phi\times\phi$ maps entourages to entourages and $\phi$ is proper in the sense that preimages of bounded sets are bounded. Two coarse maps $\phi,\psi:X\to Y$ are called \emph{close} if $(\phi\times\psi)(\Delta_X)\subset Y\times Y$ is an entourage.

The \emph{coarse category of coarse spaces} is the category of coarse spaces and closeness classes of coarse maps between them. A coarse map is called a \emph{coarse equivalence} if it is an isomorphism in this category.
\end{mdef}

We are now in a position to introduce Higson type corona $C^*$-algebras which are the main coarse geometric players in this paper.
\begin{mdef} \label{coarsefunctionalgebras}
Let $X$ be a coarse space, $pt\in X$ a distinguished point, $(Y,d)$ a metric space and $D$ any $C^*$-algebra.
\begin{enumerate}
\item A Borel map $f:X\to Y$
is said to have \emph{vanishing variation}, if for any entourage $E\subset X\times X$ the function
$$\operatorname{Var}_Ef:\,X\to [0,\infty),\quad x\mapsto\sup\{d(f(x),f(y))\,|\,(x,y)\in E\}$$
vanishes at infinity. \cite[Definition 7]{EmeMey}
\item For any coarse space $X$ and any $C^*$-algebra $D$, we let $\kbar(X;D)$ be the $C^*$-algebra of bounded, continuous functions of vanishing variation $X\to D$. It is called the \emph{unstable Higson compactification of $X$ with coefficients $D$}.
\item The \emph{unstable Higson corona of $X$ with coefficients $D$} is the quotient $C^*$-algebra $\kfrak(X;D):=\kbar(X;D)/C_0(X;D)$.
\item The \emph{pointed unstable Higson compactification of $X$ with coefficients $D$} is
$$\kbar_0(X;D):=\{f\in\kbar(X;D)\,|\,f(pt)=0\}.$$
We have $\kfrak(X;D)=\kbar_0(X;D)/C_0(X\setminus\{pt\};D)$.
\item The stable counterparts of these function algebras are obtained by replacing $D$ with $D\otimes\K$:
\begin{align*}
\cbar(X;D)&:=\kbar(X;D\otimes\K),
\\\cbar_0(X;D)&:=\kbar_0(X;D\otimes\K),
\\\cfrak(X;D)&:=\kfrak(X;D\otimes\K).
\end{align*}
In particular, $\cfrak(X;D)$ is the \emph{stable Higson corona of $X$ with coefficients $D$}.  \cite[Definition 8]{EmeMey}
\item If $D=\C$, we usually omit $D$ from notation.
\end{enumerate}
\end{mdef}
\begin{prop}
The assignments $X\mapsto \kfrak(X,D),\, X\mapsto \cfrak(X,D),\, X\mapsto \cfrak(X),$ are contravariant functors from the coarse category of coarse spaces to the category of $C^*$-algebras.

The assignments $X\mapsto \kbar(X,D),\, X\mapsto \cbar(X,D),\, X\mapsto \cbar(X),$ are contravariantly functorial with respect to continuous coarse maps.

The assignments $X\mapsto \kbar_0(X,D),\, X\mapsto \cbar_0(X,D),\, X\mapsto \cbar_0(X),$ are contravariantly functorial with respect to pointed continuous coarse maps.

Furthermore, there is the obvious covariant functoriality in the coefficient algebra.
\end{prop}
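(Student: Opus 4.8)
The plan is to verify functoriality by constructing the induced $*$-homomorphisms explicitly at the level of function algebras and then checking that closeness classes of coarse maps act trivially. Given a coarse map $\phi:X\to Y$, the natural candidate for the induced map is pullback $\phi^*$ sending a function $g:Y\to D$ to $g\circ\phi:X\to D$. First I would check that pullback of a vanishing-variation function again has vanishing variation. The key computation is that for an entourage $E\subset X\times X$ we have $\operatorname{Var}_E(g\circ\phi)(x)=\sup\{d(g(\phi(x)),g(\phi(y)))\,|\,(x,y)\in E\}\leq\operatorname{Var}_{(\phi\times\phi)(E)}g(\phi(x))$, and since $\phi$ is coarse, $(\phi\times\phi)(E)$ is an entourage of $Y$, so $\operatorname{Var}_{(\phi\times\phi)(E)}g$ vanishes at infinity on $Y$; properness of $\phi$ (preimages of bounded, hence of cocompact-complement, sets) then forces the composite to vanish at infinity on $X$. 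This is the main technical point, and it is exactly where properness and the coarse-map condition are both used.

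Next I would organize the various algebras by what extra structure the pullback respects. For $\cbar(X;D)$ and the compactification-type algebras one needs $g\circ\phi$ to be \emph{continuous}, which requires $\phi$ itself to be continuous; hence these are only functorial with respect to continuous coarse maps, as stated. For the pointed versions $\cbar_0$ one additionally needs $g(\phi(pt_X))=g(pt_Y)=0$, i.e.\ $\phi$ must be a pointed map, which accounts for the restriction to pointed continuous coarse maps. In each case I would confirm that pullback is a $*$-homomorphism (it is multiplicative, $*$-preserving and norm-nonincreasing, being composition of functions into a $C^*$-algebra) and that $\id^*=\id$ and $(\psi\circ\phi)^*=\phi^*\circ\psi^*$, giving contravariant functoriality on the nose before passing to closeness classes.

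For the corona algebras $\cfrak(X;D)$ I would observe that pullback sends $C_0(Y;D\otimes\K)$ into $C_0(X;D\otimes\K)$ by properness of $\phi$, so $\phi^*$ descends to the quotient; Borel measurability (rather than continuity) suffices here because the quotient by $C_0$ only sees behaviour at infinity, matching the fact that coronas are functorial for all coarse maps, not just continuous ones. The remaining point, and the reason one works in the coarse category, is that the induced map on corona $K$-theory depends only on the closeness class: if $\phi,\psi:X\to Y$ are close, then $(\phi\times\psi)(\Delta_X)$ is an entourage of $Y$, so for $g$ of vanishing variation the difference $g\circ\phi-g\circ\psi$ satisfies $|g(\phi(x))-g(\psi(x))|\leq\operatorname{Var}_{(\phi\times\psi)(\Delta_X)}g(\phi(x))\to0$ at infinity, hence lies in $C_0(X;D\otimes\K)$ and vanishes in the corona. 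Thus $\phi^*=\psi^*$ as maps on $\cfrak$, so the assignment factors through the coarse category. Covariant functoriality in $D$ is immediate, since a $*$-homomorphism $D\to D'$ induces $D\otimes\K\to D'\otimes\K$ and hence a map of function algebras compatible with all the constructions above. I expect the vanishing-variation estimate for pullbacks to be the only genuine obstacle; everything else is a bookkeeping verification of the $*$-homomorphism and functoriality axioms.
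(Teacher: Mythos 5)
Your estimates are correct and are the standard ones: the bound $\operatorname{Var}_E(g\circ\phi)\leq(\operatorname{Var}_{(\phi\times\phi)(E)}g)\circ\phi$ shows pullbacks preserve vanishing variation, and the bound $\|g(\phi(x))-g(\psi(x))\|\leq\operatorname{Var}_{(\phi\times\psi)(\Delta_X)}g(\phi(x))$ shows close maps agree modulo $C_0$, so the corona functor factors through closeness classes. The discussion of which algebras require continuity or pointedness of $\phi$ is also right. Note that the paper itself does not write any of this out; it simply refers to \citeEM, Proposition 13, ``for the nontrivial parts''.

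However, there is a genuine gap at exactly the point the paper outsources. A coarse map $\phi:X\to Y$ is only required to be \emph{Borel}, so for $g\in\kbar(Y;D)$ the pullback $g\circ\phi$ is a bounded Borel function of vanishing variation on $X$ which in general is \emph{not continuous}, hence is not an element of $\kbar(X;D)$, and a fortiori does not directly represent a class in $\kfrak(X;D)=\kbar(X;D)/C_0(X;D)$. Your remark that ``Borel measurability suffices because the quotient only sees behaviour at infinity'' asserts the conclusion rather than proving it: what is actually needed is the comparison result that the inclusion of the continuous Higson algebra into the algebra of bounded Borel functions of vanishing variation induces an isomorphism of the quotient coronas (equivalently, every bounded Borel function of vanishing variation agrees with a continuous one up to a Borel function vanishing at infinity). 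This is proved in \citeEM\ via an averaging/partition-of-unity argument and is precisely the nontrivial content of their Proposition 13; without it your construction of $\phi^*$ on $\kfrak(Y;D)$ and $\cfrak(Y;D)$ is not well defined. The same issue affects your claim that $\phi^*$ maps $C_0(Y;D\otimes\K)$ into $C_0(X;D\otimes\K)$: the image consists of Borel, not continuous, functions, so the ideal must likewise be enlarged to its Borel analogue before one can compare quotients. Once that comparison lemma is supplied, the rest of your argument goes through.
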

\begin{proof}
See \cite[Proposition 13]{EmeMey} for the nontrivial parts of this proposition.
\end{proof}

%%%%%%%%%%%%%%%%%%%%%%%%%%%%%%%%%%%%%%%%%%%%%%%%%%%%%%%%

\section{{$\sigma$-locally compact spaces and $\sigma$-coarse spaces}}\label{sec:sigmaspaces}
\begin{mdef}[{\cite[Section 2]{EmeMey}}]
A $\sigma$-locally compact space is an increasing sequence $(X_n)_{n\in \N}$ of subsets of a set $\mathcal{X}$ such that 
\begin{itemize}\itemsep0pt\parskip0pt 
\item $\mathcal{X}=\bigcup_{n\in \N}X_n$,
\item each $X_n$ is a locally compact Hausdorff space,
\item $X_n$ carries the subspace topology of $X_m$ for all $n\leq m$,
\item $X_n$ is closed in $X_m$ for all $n\leq m$.
\end{itemize}
\end{mdef}
By the usual abuse of notation, we will often call $\mathcal{X}$ instead of the sequence $(X_n)_{n\in \N}$ a $\sigma$-locally compact space. However, the sequence is an essential part of the definition.

Locally compact spaces are $\sigma$-locally compact spaces with all $X_n$ equal.

The set $\mathcal{X}$ can and will always be endowed with the final topology:
A subset $\mathcal{A}\subset\mathcal{X}$ is open/closed if all the intersections $A_n=\mathcal{A}\cap X_n$ are open/closed.
If $\mathcal{A}\subset\mathcal{X}$ is open or closed, then $\mathcal{X}\setminus\mathcal{A}=\bigcup_{n\in\N}X_n\setminus A_n$ is again a $\sigma$-locally compact space.

Note, that the following notion of morphism and cartesian product gives $\sigma$-locally compact spaces the structure of  a monoidal category.
\begin{mdef}\label{sigmaspacesmonoidalcategory}
Let $\mathcal{X}=\bigcup_{n\in\N}X_n$ and $\mathcal{Y}=\bigcup_{n\in\N}Y_n$ be  $\sigma$-locally compact spaces.
\begin{itemize}
\item A morphism $f: \mathcal{X}\to \mathcal{Y}$ is a map $f:\mathcal{U}\to\mathcal{Y}$ from an open subset $\mathcal{U}\subset\mathcal{X}$ such that for each $m\in\N$ there is $n\in\N$ such that $f(U_m)\subset Y_n$ and the restriction $f|_{U_m}:U_m\to Y_n$ is a proper continuous map.\footnote{These morphisms correspond to pointed continuous maps between one point compactifications.}

We will also call such morphisms simply \emph{proper continuous maps}.
\item The cartesian product is defined by the sequence $(X_m\times Y_m)_{m\in\N}$. Passing forgetfully to the category of sets we see that
$$\mathcal{X}\times\mathcal{Y}=\bigcup_{m\in\N}X_m\times Y_m$$
and we apply the usual abuse of notation of denoting the product simply by $\mathcal{X}\times\mathcal{Y}$ instead of $(X_m\times Y_m)_{m\in\N}$.
\end{itemize}
\end{mdef}
It is worth noticing that continuity of the maps $f|_{U_m}:U_m\to Y_n$ is equivalent to continuity of $f:\mathcal{U}\to\mathcal{Y}$ in the final topologies.

A homotopy between two proper continuous maps $f,g:\mathcal{X}\to\mathcal{Y}$ is of course a proper continuous map $\mathcal{X}\times[0,1]\to\mathcal{Y}$ restricting to $f,g$ at $0,1$, respectively.

Given a $\sigma$-locally compact space $\mathcal{X}=\bigcup_{n\in \N}X_n$, a $C^*$-algebra $D$ and a closed subset $\mathcal{A}\subset\mathcal{X}$, we can define the following function algebras:
\begin{align*}
C_0(\mathcal{X};D)&=\{f:\mathcal{X}\to D:\,f|_{X_n}\in C_0(X_n;D)\,\forall n\in \N\}
\\C_b(\mathcal{X},\mathcal{A};D)&=\{f:\mathcal{X}\to D:\,f|_{X_n}\in C_b(X_n;D)\,\forall n\in \N,\,f|_{\mathcal{A}}=0\}
\end{align*}
Note that they are $\sigma$-$C^*$-algebras  in the sense of Definition \ref{defsigmaCstar} below, with $C^*$-seminorms given by $p_n(f)=\|f|_{X_n}\|_\infty$. Our definition of morphisms in the category of $\sigma$-locally compact spaces is such that a proper continuous map 
$f:\mathcal{X}\to\mathcal{Y}$ induces a $*$-homomorphism $f^*:\,C_0(\mathcal{Y};D)\to C_0(\mathcal{X};D)$.
If additionally $f$ is defined on all of $\mathcal{X}$ and $\mathcal{A}\subset\mathcal{X},\mathcal{B}\subset\mathcal{Y}$ are closed subsets such that $f(\mathcal{A})\subset\mathcal{B}$, then $f$ induces a $*$-homomorphism $f^*:\,C_b(\mathcal{Y},\mathcal{B};D)\to C_b(\mathcal{X},\mathcal{A};D)$.

More properties of these function algebras are discussed in the next section.

We proceed with $\sigma$-coarse spaces.
\begin{mdef}%[{\cite[Section 2]{EmeMey}}]
A $\sigma$-coarse space is a $\sigma$-locally compact space, in which each $X_n$ is a coarse space and $X_n$ has the subspace coarse structure of $X_m$ for all $n\leq m$ \cite[Section 2]{EmeMey}.
A  \emph{coarse continuous map} is a map $f:\mathcal{X}\to\mathcal{Y}$ such that  for each $m\in\N$ there is $n\in\N$ such that $f(X_m)\subset Y_n$ and the restriction $f|_{X_m}:X_m\to Y_n$ is a coarse continuous map.
\end{mdef}
Given a $\sigma$-coarse space $\mathcal{X}=\bigcup_{n\in \N}X_n$ and a $C^*$-algebra $D$, we define the $\sigma$-$C^*$-algebra
$$\kbar(\mathcal{X};D):=\{f:\mathcal{X}\to D:\,f|_{X_n}\in \kbar(X_n;D)\,\forall n\in \N\}.$$
In the same manner we can generalize the other function algebras of Definition \ref{coarsefunctionalgebras}.
In particular, if $pt\in X_0$ is a point then
\begin{align*}
\kbar_0(\mathcal{X};D):&=\{f:\mathcal{X}\to D:\,f|_{X_n}\in \kbar_0(X_n;D)\,\forall n\in \N\}
\\&=\{f\in\kbar(\mathcal{X};D)\,:\,f(pt)=0\},
\\\kfrak(\mathcal{X};D):&=\kbar(\mathcal{X};D)/C_0(\mathcal{X};D)=\kbar_0(\mathcal{X};D)/C_0(\mathcal{X}\setminus\{pt\};D).
\end{align*}
A coarse continuous map $f:\mathcal{X}\to\mathcal{Y}$ mapping the basepoint in $X_0$ to the basepoint in $Y_0$ induces a $*$-homomorphism $f^*:\,\kbar_0(\mathcal{Y};D)\to \kbar_0(\mathcal{X};D)$.

%%%%%%%%%%%%%%%%%%%%%%%%%%%%%%%%%%%%%%%%%%%%%%%%%%%%%%%%

\section{{$\sigma$-$C^*$-algebras}}\label{sec:sigmaalgebras}
A good exposition of $\sigma$-$C^*$-algebras can be found in \cite{PhiInv}. In this section we summarize all the properties that we shall need. 
\begin{mdef}[{c.f. \cite{PhiInv}}]\label{defsigmaCstar}
A $\sigma$-$C^*$-algebra is a complex topological $*$-algebra such that its topology
\begin{itemize}\itemsep0pt\parskip0pt 
\item is Hausdorff,
\item is generated by a countable family of  $C^*$-seminorms $(p_\alpha)_{\alpha\in D}$, i.e.\ the $p_\alpha$ are submultiplicative and satisfy the $C^*$-identity
$$p_\alpha(a^*a)=p_\alpha(a)^2\quad\forall a\in A,$$
\item is complete with respect to the family of $C^*$-seminorms.
\end{itemize}
\end{mdef}
We can (and will) always assume without loss of generality that the index set is $D=\N$ and the $C^*$-seminorms are an increasing sequence: $p_n\leq p_m\;\forall n\leq m$

The quotients $A_n=A/\ker(p_n)$  are $C^*$-algebras and the continuous $*$-homo\-mor\-phisms $A\to A_n$ and $A_m\to A_n$ ($n\leq m$) are surjective.
Furthermore, $A\cong\varprojlim_n  A_n$ in the category of topological $*$-algebras.\footnote{The algebra underlying this topological algebra is the inverse limit of the algebras underlying the $A_n$.} In fact, we have the following equivalent characterization:
\begin{prop}[{\cite[Section 5]{PhiInv}}]
A topological $*$-algebra is a $\sigma$-$C^*$-algebra if and only if it is an inverse limit of an inverse system of  $C^*$-algebras indexed over the natural numbers.
\end{prop}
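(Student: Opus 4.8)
The statement to prove is the following proposition, characterizing $\sigma$-$C^*$-algebras as inverse limits of $C^*$-algebras indexed over $\mathbb{N}$:

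\begin{proof}[Proof sketch]
The plan is to prove both implications of the characterization. The backward direction is the easy half: given an inverse system $(A_n)_{n\in\N}$ of $C^*$-algebras with connecting morphisms $\pi_{mn}\colon A_m\to A_n$ for $n\leq m$, I would take $A=\varprojlim_n A_n$, concretely realized as the subalgebra of the product $\prod_n A_n$ consisting of coherent sequences $(a_n)_n$ satisfying $\pi_{mn}(a_m)=a_n$, equipped with the initial topology from the projections $\rho_n\colon A\to A_n$. Setting $p_n(a):=\|\rho_n(a)\|_{A_n}$ gives an increasing sequence of $C^*$-seminorms, since each $\|\cdot\|_{A_n}$ is a $C^*$-norm and the connecting maps are norm-decreasing $*$-homomorphisms. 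Hausdorffness follows because $p_n(a)=0$ for all $n$ forces every component to vanish, and completeness follows because the inverse limit of complete spaces along continuous maps is complete: a Cauchy net in $A$ projects to Cauchy nets in each $A_n$, whose limits are automatically coherent and thus assemble to a limit in $A$.

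The forward direction is where the real work lies, and it is essentially the content already announced in the paragraph preceding the proposition. Starting from a $\sigma$-$C^*$-algebra $A$ with an increasing sequence of $C^*$-seminorms $(p_n)_n$, I would form the quotients $A_n:=A/\ker(p_n)$. The first key step is to verify that each $A_n$ is genuinely a $C^*$-algebra: $p_n$ descends to a $C^*$-\emph{norm} $\bar p_n$ on $A_n$ by construction of the kernel, and one must check that $(A_n,\bar p_n)$ is complete, i.e.\ that the quotient is already closed rather than requiring a further completion. Because $p_m\geq p_n$ for $m\geq n$, the kernels are nested, $\ker(p_m)\subset\ker(p_n)$, which induces well-defined surjective connecting $*$-homomorphisms $\pi_{mn}\colon A_m\to A_n$; these are norm-decreasing and form an inverse system indexed over $\N$.

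The crux is then to produce the isomorphism $A\cong\varprojlim_n A_n$ as topological $*$-algebras. There is a canonical $*$-homomorphism $\Phi\colon A\to\varprojlim_n A_n$ sending $a$ to its compatible family of images $(a+\ker(p_n))_n$; it is clearly continuous and a $*$-homomorphism, and injectivity is immediate from Hausdorffness, since $\Phi(a)=0$ means $p_n(a)=0$ for all $n$. The main obstacle, and the step I would spend the most care on, is \textbf{surjectivity together with completeness}. Given a coherent sequence $(a_n)_n$ in the inverse limit, I must lift it to a single element of $A$; the natural approach is to choose representatives $\tilde a_n\in A$ of each $a_n$ and exploit coherence, $\pi_{mn}(a_m)=a_n$, to arrange that the differences $\tilde a_m-\tilde a_n$ become small in each fixed seminorm $p_k$ as $m,n\to\infty$, making $(\tilde a_n)_n$ Cauchy in $A$. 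This is exactly where completeness of $A$ with respect to the whole family $(p_n)_n$ is indispensable: it furnishes the limit $a=\lim_n\tilde a_n$, and a short check shows $\Phi(a)=(a_n)_n$. That $\Phi$ is moreover a homeomorphism onto its image follows because the topology on $A$ is by hypothesis generated by the $p_n=\bar p_n\circ\Phi$, which are precisely the pullbacks of the defining seminorms on the inverse limit. For the detailed verifications I would refer, as the paper does, to \cite{PhiInv}, since these are standard facts in the theory of $\sigma$-$C^*$-algebras.
\end{proof}
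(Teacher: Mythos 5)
Your proposal is correct and follows essentially the same route the paper relies on: the paper itself gives no proof but cites \cite{PhiInv}, and the surrounding text sketches exactly your forward direction (forming $A_n=A/\ker(p_n)$ and identifying $A\cong\varprojlim_n A_n$). You rightly isolate the one genuinely nontrivial point, namely that $A/\ker(p_n)$ is already complete (a special feature of $C^*$-seminorms proved in \cite{PhiInv}), and the rest of your sketch — including the observation that coherent lifts are automatically Cauchy — is sound.
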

As we have seen, the $*$-homomorphisms in this inverse system may be chosen to be surjections.

The $\sigma$-$C^*$-algebras defined in the previous section are the inverse limits of the inverse systems of $C^*$-algebras $C_0(X_n;D),\,C_b(X_n,A_n;D),\,\kbar_0(X_n;D)$. In fact, we could have also chosen a $\sigma$-$C^*$-algebra $D$ as coefficient algebra by defining the function algebras as the inverse limits of $C_0(X_n;D_n)$, $C_b(X_n,A_n;D_n)$, $\kbar_0(X_n;D_n)$. As before, they can also be defined as algebras of functions $\mathcal{X}\to D$.

We now collect some important features of $\sigma$-$C^*$-algebras.
\begin{prop}[{\cite[Theorem 5.2]{PhiInv}}]\label{autocont}
Just as with $C^*$-algebras, $*$-homo\-mor\-phisms between $\sigma$-$C^*$-algebras are automatically continuous.
\end{prop}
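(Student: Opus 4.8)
The plan is to reduce to the case of a genuine $C^*$-algebra target and then exploit positivity together with completeness. Write $A\cong\varprojlim_n A_n$ with increasing $C^*$-seminorms $(p_n)$ and let $\phi\colon A\to B$ be a $*$-homomorphism into another $\sigma$-$C^*$-algebra $B\cong\varprojlim_m B_m$ with seminorms $(q_m)$. Continuity of $\phi$ amounts to continuity of each $q_m\circ\phi$, and since $q_m=\|\cdot\|_{B_m}\circ\pi_m$ for the quotient $*$-homomorphism $\pi_m\colon B\to B_m$ onto a $C^*$-algebra, it suffices to treat the composite $*$-homomorphism $\pi_m\circ\phi\colon A\to B_m$. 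Thus I may assume from the outset that the target is a $C^*$-algebra $B$, and the goal becomes: there exist $n$ and $C>0$ with $\|\phi(a)\|\le C\,p_n(a)$ for all $a\in A$.

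The key structural point is that a $*$-homomorphism is automatically positive and order preserving: since $\phi(d^*d)=\phi(d)^*\phi(d)\ge 0$, positive elements (those positive in every $A_n$, equivalently of the form $d^*d$) are sent to positive elements, so $a\le b$ implies $\phi(a)\le\phi(b)$. I will combine this with the standard $C^*$-inequality that $0\le x\le y$ forces $\|x\|\le\|y\|$ in $B$.

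Now argue by contradiction. If $\phi$ is bounded with respect to no single $p_n$, then for each $n$ I can find $a_n\in A$ with $p_n(a_n)\le 1$ and $\|\phi(a_n)\|$ as large as I please; after rescaling so that $p_n(a_n)\le 2^{-n}$ while $\|\phi(a_n)\|\ge n$ and passing to $b_n:=a_n^*a_n\ge 0$, the $C^*$-identity $p_n(a_n^*a_n)=p_n(a_n)^2$ yields positive elements with $p_n(b_n)\le 4^{-n}$ and $\|\phi(b_n)\|\ge n^2$. Because $p_k\le p_n$ for $k\le n$, the partial sums of $\sum_n b_n$ are Cauchy in every seminorm, so by completeness the series converges to some $c\in A$; moreover $c-b_m=\sum_{n\ne m}b_n$ is a limit of positive partial sums and the positive cone is closed, whence $c\ge b_m$ for every $m$. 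Applying the order-preserving map $\phi$ and the $C^*$-inequality gives $\|\phi(c)\|\ge\|\phi(b_m)\|\ge m^2$ for all $m$, forcing $\|\phi(c)\|=\infty$, which is absurd for an element of $B$. Hence some $p_n$ dominates $\|\phi(\cdot)\|$ and $\phi$ is continuous.

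The crucial subtlety, and the step I would guard most carefully, is that nowhere do I apply $\phi$ to a convergent series term by term, which would illegitimately presuppose the continuity I am trying to establish; the contradiction rests only on the honest inequalities $c\ge b_m$ in $A$ and the order monotonicity of $\phi$ on fixed elements. I must also verify the supporting $\sigma$-$C^*$ facts invoked above, namely that the positive cone is closed, that positive elements admit square roots via functional calculus, and that the seminorms are submultiplicative and satisfy the $C^*$-identity; these all follow from the description $A\cong\varprojlim_n A_n$ together with the corresponding $C^*$-algebra facts applied levelwise. It is worth noting that the $C^*$-structure is genuinely essential here: for general commutative Fréchet algebras the analogous automatic-continuity statement is not available, so no purely topological closed-graph argument can replace the positivity input.
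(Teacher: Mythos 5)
Your argument is correct, and it is essentially the proof from the reference the paper cites for this statement ([Phi88, Theorem 5.2]): reduce to a $C^*$-algebra target, and if no seminorm $p_n$ dominates $\|\phi(\cdot)\|$, sum rapidly decaying positive elements $b_n=a_n^*a_n$ to produce $c\ge b_m$ with $\|\phi(c)\|\ge\|\phi(b_m)\|\to\infty$, using that $*$-homomorphisms preserve positivity. The supporting facts you flag (closed positive cone, square roots via levelwise functional calculus) do hold for $\sigma$-$C^*$-algebras, so no gap remains.
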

Just as in \cite{PhiInv}, we will always call such continuous $*$-homomorphism simply \emph{homomorphisms}.

\emph{Ideals} in $\sigma$-$C^*$-algebras will always be closed two-sided selfadjoint ideals.
\begin{prop}[{\cite[Corollary 5.4]{PhiInv}}]
Let $A$ be a $\sigma$-$C^*$-algebra and let $I$ be an ideal in $A$. Then $A/I$ is a $\sigma$-$C^*$-algebra, and every homomorphism $\varphi:A\to B$ of $\sigma$-$C^*$-algebras such that $\varphi|_I=0$ factors through $A/I$.
\end{prop}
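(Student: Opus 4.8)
The plan is to realize $A/I$ through the quotient seminorms induced by the defining family $(p_n)$ of $A$, and to reduce the only nontrivial axiom to the classical theorem that the quotient of a $C^*$-algebra by a closed ideal is again a $C^*$-algebra. Recall from the discussion above that $A\cong\varprojlim_n A_n$ with $A_n=A/\ker(p_n)$, surjections $\pi_n\colon A\to A_n$, and $p_n(a)=\|\pi_n(a)\|_{A_n}$. First I would set
$$\bar p_n(a+I):=\inf_{b\in I}p_n(a+b),$$
which is evidently a submultiplicative $*$-seminorm on $A/I$. To obtain the $C^*$-identity I would introduce $I_n:=\overline{\pi_n(I)}$, a closed selfadjoint two-sided ideal in the $C^*$-algebra $A_n$, so that $A_n/I_n$ is a $C^*$-algebra. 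A short computation using continuity of the norm (the infimum over $\pi_n(I)$ agrees with the infimum over its closure $I_n$) yields
$$\bar p_n(a+I)=\inf_{c\in\pi_n(I)}\|\pi_n(a)+c\|_{A_n}=\|\pi_n(a)+I_n\|_{A_n/I_n}.$$
Thus $\bar p_n$ is the pullback of the genuine $C^*$-norm of $A_n/I_n$ along the $*$-homomorphism $A/I\to A_n/I_n$, $a+I\mapsto\pi_n(a)+I_n$, and therefore inherits submultiplicativity and the $C^*$-identity.

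Next I would deal with the topology and completeness. Since $I$ is closed, $A/I$ is Hausdorff, and the quotient topology of the Fréchet space $A$ by the closed subspace $I$ is precisely the topology generated by the quotient seminorms $(\bar p_n)_{n\in\N}$ (a standard fact about quotients of locally convex spaces). For completeness I would invoke that $A$, carrying a countable complete family of seminorms, is a Fréchet space, and that the quotient of a Fréchet space by a closed subspace is again Fréchet, hence complete. Combined with the previous paragraph, this shows that $A/I$ together with $(\bar p_n)$ is a $\sigma$-$C^*$-algebra. (Equivalently one could verify $A/I\cong\varprojlim_n A_n/I_n$ as topological $*$-algebras, but the seminorm description is more economical and avoids checking surjectivity of the connecting maps.)

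The universal property is then formal. If $\varphi\colon A\to B$ is a homomorphism with $\varphi|_I=0$, then $\ker\varphi\supseteq I$, so $\varphi$ factors set-theoretically as $\varphi=\bar\varphi\circ q$, where $q\colon A\to A/I$ is the quotient map and $\bar\varphi\colon A/I\to B$ is a $*$-homomorphism (because $q$ is a surjective $*$-homomorphism and $\varphi$ is one). Its continuity follows either from the fact that $q$ is a topological quotient map or, more directly, from automatic continuity of $*$-homomorphisms of $\sigma$-$C^*$-algebras, Proposition \ref{autocont}.

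I expect the single non-formal ingredient to be the $C^*$-identity for the quotient seminorms $\bar p_n$, which the above strategy reduces to the classical $C^*$-algebra quotient theorem via the identification with $A_n/I_n$; the accompanying functional-analytic facts about the quotient topology and the completeness of Fréchet quotients are standard and should be cited rather than reproved.
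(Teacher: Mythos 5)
Your proof is correct. The paper does not actually prove this proposition --- it only cites \cite[Corollary 5.4]{PhiInv} --- and your argument is essentially the standard one underlying that reference: identifying the quotient seminorms $\bar p_n$ with the genuine $C^*$-norms of $A_n/I_n$ (equivalently, establishing $A/I\cong\varprojlim_n A_n/I_n$) reduces the $C^*$-identity to the classical quotient theorem for $C^*$-algebras, while the Hausdorff and completeness assertions follow from standard facts about Fréchet quotients and the factorization property from automatic continuity.
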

If $\mathcal{X}$ is a $\sigma$-locally compact space, $\mathcal{A}$ a closed subset and $D$ a coefficient $C^*$-algebra, then $C_0(\mathcal{X}\setminus\mathcal{A};D)$ is an ideal in $C_b(\mathcal{X},\mathcal{A};D)$. If $\mathcal{X}$ is a $\sigma$-coarse space with basepoint $pt$ and $D$ a coefficient algebra, then $C_0(\mathcal{X}\setminus\{pt\};D)$ is an ideal in $\kbar_0(\mathcal{X};D)$, so $\kfrak(\mathcal{X};D)=\kbar_0(\mathcal{X};D)/C_0(\mathcal{X}\setminus\{pt\};D)$ is a $\sigma$-$C^*$-algebra.
\begin{lem}[{cf.\ \cite[Lemma 18]{EmeMey}}]\label{coarselyequivalentfiltration}
If all inclusions $X_n\subset X_m$ ($m\leq n$) are coarse equivalences, then $\kfrak(\mathcal{X};D)$ is a $C^*$-algebra isomorphic to $\kfrak(X_n;D)$ for all $n$.
\end{lem}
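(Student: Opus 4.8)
The plan is to present $\kfrak(\mathcal{X};D)$ as the inverse limit of the tower of $C^*$-algebras $\big(\kfrak(X_n;D)\big)_{n\in\N}$ whose connecting maps are the restriction homomorphisms, and then to observe that under the coarse-equivalence hypothesis these connecting maps are all isomorphisms, so that the inverse limit collapses onto a single $C^*$-algebra. First I would recall that, by construction,
$\kfrak(\mathcal{X};D)=\kbar_0(\mathcal{X};D)/C_0(\mathcal{X}\setminus\{pt\};D)$, where $\kbar_0(\mathcal{X};D)=\varprojlim_n\kbar_0(X_n;D)$ and $C_0(\mathcal{X}\setminus\{pt\};D)=\varprojlim_n C_0(X_n\setminus\{pt\};D)$, the connecting maps being restriction along the closed inclusions $X_n\hookrightarrow X_m$ ($n\le m$). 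The $n$-th $C^*$-seminorm $q_n$ on the quotient is computed by $q_n([f])=\inf\{\,\|f|_{X_n}-h\|_\infty : h\in C_0(X_n\setminus\{pt\};D)\,\}$; here one uses that every element of $C_0(X_n\setminus\{pt\};D)$ is the restriction of a compatible $C_0$-family on all of $\mathcal{X}$, which follows from a Tietze-type extension along the closed inclusions. This identifies the Hausdorff quotient $\kfrak(\mathcal{X};D)/\ker q_n$ isometrically with $\kfrak(X_n;D)$ via $[f]\mapsto[f|_{X_n}]$ and the structure maps of the tower with the restriction homomorphisms $\kfrak(X_m;D)\to\kfrak(X_n;D)$, whence $\kfrak(\mathcal{X};D)\cong\varprojlim_n\kfrak(X_n;D)$.

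Next, for $n\le m$ the restriction homomorphism $\kfrak(X_m;D)\to\kfrak(X_n;D)$ is precisely the map induced by the inclusion $X_n\hookrightarrow X_m$ under the contravariant functor $X\mapsto\kfrak(X;D)$ recalled in Section~\ref{sec:CoarseGeometry}. By hypothesis this inclusion is a coarse equivalence, i.e.\ an isomorphism in the coarse category, so the induced homomorphism is an isomorphism of $C^*$-algebras; thus every connecting map in the tower is an isomorphism. Finally, an inverse limit over $\N$ all of whose connecting maps are isomorphisms is canonically isomorphic to each of its terms, the inverse of the projection $\varprojlim_n\kfrak(X_n;D)\to\kfrak(X_n;D)$ being built from the inverses of the connecting isomorphisms. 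In particular all the seminorms $q_n$ then coincide with a single $C^*$-norm, so $\kfrak(\mathcal{X};D)$ is complete in one norm and is an honest $C^*$-algebra, isomorphic to $\kfrak(X_n;D)$ for every $n$.

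I expect the main obstacle to lie in the first step, namely verifying that passing to the Higson corona commutes with the inverse limit. Concretely one must check both that the quotient seminorm $q_n$ is genuinely the norm of $\kfrak(X_n;D)$ and that the $n$-th quotient map is surjective onto $\kfrak(X_n;D)$. The seminorm identity needs the extendability of $C_0$-functions off the closed subspaces $X_n$, while surjectivity requires lifting a class in $\kfrak(X_n;D)$ to a compatible family $(f_m)_m$; this lifting is where the coarse-equivalence hypothesis re-enters, since one propagates the class up the tower using the isomorphisms $\kfrak(X_{m+1};D)\cong\kfrak(X_m;D)$ and repairs the resulting $C_0$-discrepancies inductively, a Mittag--Leffler (vanishing $\varprojlim^1$) argument. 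Once this inverse-limit identification is in place, the two remaining steps are purely formal.
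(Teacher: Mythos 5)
Your argument is correct and is essentially the one this statement rests on: the paper gives no proof of its own, deferring to \cite[Lemma 18]{EmeMey}, and the argument there is exactly this identification of $\kfrak(\mathcal{X};D)$ with the inverse limit of the $\kfrak(X_n;D)$ along restriction maps, which are isomorphisms by functoriality of $\kfrak(\,\cdot\,;D)$ under coarse equivalences. You also correctly isolate the only non-formal step --- surjectivity of $\kbar_0(\mathcal{X};D)\to\kfrak(X_n;D)$, handled by the inductive lifting with Tietze-type $C_0$-corrections --- which is genuinely needed because the restriction maps $\kbar_0(X_m;D)\to\kbar_0(X_n;D)$ are not themselves surjective, so the quotient-of-inverse-limits machinery of Proposition \ref{exactseqproperties} does not apply directly.
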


A sequence 
\begin{equation}\label{exactseq}
0\to I\xrightarrow{\alpha}A\xrightarrow{\beta}B\to 0
\end{equation}
of $\sigma$-$C^*$-algebras and homomorphisms is called \emph{exact} if it is algebraically exact, $\alpha$ is a homeomorphism onto its image, and $\beta$ defines a homeomorphism of $A/\ker(\beta)\to B$.
\begin{prop}\label{exactseqproperties}
\begin{enumerate}
\item For the sequence of $\sigma$-$C^*$-algebras and homomorphisms \eqref{exactseq} 
 to be exact, it is sufficient that it be algebraically exact. \cite[Corollary 5.5]{PhiInv}
\item The sequence of $\sigma$-$C^*$-algebras and homomorphisms \eqref{exactseq} is exact if and only if it is an inverse limit (with surjective maps) of exact sequences of $C^*$-algebras. \cite[Proposition 5.3(2)]{PhiInv}\label{exactseqpropertylim}
\end{enumerate}
\end{prop}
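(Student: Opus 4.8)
The plan is to reduce both parts to three facts established above: automatic continuity of homomorphisms (Proposition \ref{autocont}), the quotient result \cite[Corollary 5.4]{PhiInv}, and the inverse-limit description $A\cong\varprojlim_n A/\ker p_n$ for an increasing family of defining seminorms $(p_n)$, with surjectivity of connecting maps controlling the relevant $\varprojlim^1$-term.

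First I would prove part (1). Assuming \eqref{exactseq} is algebraically exact, automatic continuity makes $\alpha,\beta$ continuous, so $\ker\beta$ is a closed, two-sided, selfadjoint ideal. As a closed $*$-subalgebra it carries the restricted countable family of $C^*$-seminorms and, being closed in the complete algebra $A$, is itself a $\sigma$-$C^*$-algebra. Algebraic exactness gives $\alpha(I)=\ker\beta$ with $\alpha$ injective, so $\alpha\colon I\to\ker\beta$ is a continuous bijective homomorphism whose inverse is again a homomorphism between $\sigma$-$C^*$-algebras, hence continuous by Proposition \ref{autocont}; thus $\alpha$ is a homeomorphism onto its image. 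Dually, $A/\ker\beta$ is a $\sigma$-$C^*$-algebra by \cite[Corollary 5.4]{PhiInv}, and $\beta$ factors through a continuous bijective homomorphism $\bar\beta\colon A/\ker\beta\to B$ whose inverse is also automatically continuous, so $\bar\beta$ is a homeomorphism. These are exactly the two topological conditions in the definition of exactness.

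For the ``if'' direction of part (2), present the data as an inverse system of short exact sequences $0\to I_n\to A_n\to B_n\to 0$ of $C^*$-algebras with surjective connecting maps. The $\varprojlim$--$\varprojlim^1$ exact sequence reads
$$0\to\varprojlim_n I_n\to\varprojlim_n A_n\to\varprojlim_n B_n\to\varprojlim_n^1 I_n\to\cdots,$$
and surjectivity of the connecting maps of $(I_n)_n$ forces the Mittag--Leffler condition, so $\varprojlim_n^1 I_n=0$ and the limit sequence is algebraically exact. Its terms are $\sigma$-$C^*$-algebras and its maps homomorphisms \cite[Section 5]{PhiInv}, so part (1) upgrades algebraic exactness to exactness. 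For the ``only if'' direction, start from an exact sequence \eqref{exactseq}, fix increasing defining seminorms $(p_n)$ on $A$ and set $A_n:=A/\ker p_n$. Since $\alpha$ is a homeomorphism onto the closed ideal $\alpha(I)$, the restricted seminorms $p_n|_{\alpha(I)}$ define the topology of $I$; let $I_n\subset A_n$ be the image of $\alpha(I)$, a closed ideal of the $C^*$-algebra $A_n$, and put $B_n:=A_n/I_n$. This yields short exact sequences of $C^*$-algebras with surjective connecting maps, and taking inverse limits recovers $A$, recovers $I$ via $\alpha(I)=\varprojlim_n I_n$, and recovers $B$ via the identification $B\cong A/\alpha(I)\cong\varprojlim_n A_n/I_n$ of quotient seminorms.

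The closed-$*$-subalgebra and inverse-limit identities are formal, so the genuine content lies in two places. In the ``if'' direction the vanishing of $\varprojlim^1$ is indispensable and is precisely where surjectivity of the connecting maps enters, since the inverse limit of surjective $C^*$-homomorphisms need not be surjective without it; I expect this to be the main obstacle. In the ``only if'' direction the work is the bookkeeping that the image $I_n$ of the ideal is closed in $A_n$ and that the two-step quotient $A_n/I_n$ matches the quotient-seminorm completion of $A/\alpha(I)$, both of which follow from standard $C^*$-algebraic facts about images and quotients of ideals.
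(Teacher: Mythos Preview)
The paper does not supply its own proof of this proposition: both parts are stated with citations to \cite{PhiInv} and no argument is given in the text. Your proposal is therefore not competing with an in-paper proof but rather reconstructing Phillips' reasoning.

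Your argument is correct. Part (1) is exactly the intended mechanism: automatic continuity applied to the bijective $*$-homomorphisms $\alpha\colon I\to\ker\beta$ and $\bar\beta\colon A/\ker\beta\to B$ (and to their set-theoretic inverses) yields the two topological conditions in the definition of exactness. For part (2), the Mittag--Leffler/$\varprojlim^1$ argument is the standard way to obtain algebraic exactness of the limit, and invoking part (1) to upgrade to topological exactness is efficient. Your ``only if'' construction is also the natural one; the two points you flag at the end---closedness of $I_n$ in the $C^*$-algebra $A_n$ (image of a $*$-homomorphism into a $C^*$-algebra) and the identification of the quotient-seminorm completions with $A_n/I_n$---are indeed the only places requiring care and both follow from elementary $C^*$-algebra facts as you indicate.

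One minor remark: in the ``if'' direction you only need surjectivity of the connecting maps of the $(I_n)$-tower for the $\varprojlim^1$-vanishing, and this already follows from surjectivity in the $(A_n)$- and $(B_n)$-towers together with exactness of the rows via a diagram chase; but since the hypothesis in the statement says all connecting maps are surjective, this is moot.
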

If $\mathcal{X}$ is a $\sigma$-locally compact space and $\mathcal{A}$ a closed subset, then
$$0\to C_0(\mathcal{X}\setminus\mathcal{A})\to C_0(\mathcal{X})\to C_0(\mathcal{A})\to 0$$
is an exact sequence of $\sigma$-$C^*$-algebras. This assignment is natural under proper continuous maps between pairs of spaces $(\mathcal{X},\mathcal{A})\to(\mathcal{Y},\mathcal{B})$, i.e.\ proper continuous maps $\mathcal{X}\to\mathcal{Y}$ mapping the closed subset $\mathcal{A}\subset\mathcal{X}$ to the closed subset $\mathcal{B}\subset\mathcal{Y}$.

Just as for $C^*$-algebras, we can define the unitalization $\tilde A$ of a  $\sigma$-$C^*$-algebra $A$ by extending multiplication and $C^*$-seminorms to $\tilde A=\C\oplus A$. 
Equivalently, if $A$ is written as inverse limit $\varprojlim_n A_n$ of $C^*$-algebras $A_n$, then  $\tilde A=\varprojlim_n \tilde A_n$.
We obtain the exact sequence $$0\to A\to\tilde A\to \C\to 0.$$

The maximal tensor product $\otimes$ of $C^*$-algebras can be extended to $\sigma$-$C^*$-algebras: Let $A,B$ be two $\sigma$-$C^*$-algebras and $p,q$ continuous $C^*$-seminorms on $A,B$, respectively. We denote by $p\otimes q$ the greatest $C^*$-cross-seminorm determined by $p$ and $q$, i.\,e.\ $p\otimes q$ is the greatest $C^*$-seminorm such that $(p\otimes q)(a\otimes b)=p(a)q(b)$ on elementary tensors $a\otimes b$.

\begin{mdef}[{\cite[Definition 3.1]{PhiInv}}]
The maximal tensor product $A\otimes B$ of two $\sigma$-$C^*$-algebras $A,B$ is the completion of their algebraic tensor product $A\odot B$ with respect to the set of all $C^*$-cross-seminorms.

It is again a $\sigma$-$C^*$-algebra, with topology generated by the countable family  of $C^*$-seminorms $(p_n\otimes q_n)_{n\in\N}$.
\end{mdef}
 Of course, there is a universal property similar to the one for the maximal tensor product of $C^*$-algebras:
\begin{prop}[{\cite[Proposition 3.3]{PhiInv}}]\label{tensormaxunivprop}
Let $A,B,C$ be $\sigma$-$C^*$-algebras and let $\varphi:A\to C$ and let $\psi:B\to C$ be two 
homomorphisms whose ranges commute. Then there is a unique 
homomorphism $\eta:A\otimes B\to C$ such that $\eta(a\otimes b)=\varphi(a)\psi(b)$ for all $a\in A,b\in B$.
\end{prop}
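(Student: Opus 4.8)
The plan is to imitate the proof of the universal property for ordinary $C^*$-algebras: construct $\eta$ first on the algebraic tensor product $A\odot B$, check that it is a $*$-homomorphism there, establish the seminorm bound that makes it continuous for the cross-seminorms generating the topology of $A\otimes B$, and then extend it by density and completeness to all of $A\otimes B$. Uniqueness will be automatic from density.

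First I would define $\eta$ on $A\odot B$. The assignment $(a,b)\mapsto\varphi(a)\psi(b)$ is bilinear, hence induces a linear map $\eta_0:A\odot B\to C$ with $\eta_0(a\otimes b)=\varphi(a)\psi(b)$. That $\eta_0$ is a $*$-homomorphism is a routine verification on elementary tensors, and both the multiplicativity and the $*$-compatibility come down precisely to the hypothesis that the ranges of $\varphi$ and $\psi$ commute: for instance $\eta_0\bigl((a_1\otimes b_1)(a_2\otimes b_2)\bigr)=\varphi(a_1)\varphi(a_2)\psi(b_1)\psi(b_2)$ equals $\eta_0(a_1\otimes b_1)\,\eta_0(a_2\otimes b_2)=\varphi(a_1)\psi(b_1)\varphi(a_2)\psi(b_2)$ exactly because $\psi(b_1)$ and $\varphi(a_2)$ commute, and similarly for $\ast$.

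Next, and this is the heart of the argument, I would prove that $\eta_0$ is continuous. Write $A=\varprojlim_n A_n$ and $B=\varprojlim_n B_n$ for the increasing $C^*$-seminorms $p_n,q_n$, and write the topology of $C$ as generated by an increasing sequence of $C^*$-seminorms $(r_k)$ with quotient $C^*$-algebras $C_k=C/\ker r_k$ and projections $\pi_k:C\to C_k$. Fix $k$. Since $\varphi,\psi$ are continuous (automatically so by Proposition~\ref{autocont}), for $N$ large enough the maps $\pi_k\circ\varphi$ and $\pi_k\circ\psi$ factor through $A_N$ and $B_N$, giving $*$-homomorphisms $\bar\varphi_k:A_N\to C_k$ and $\bar\psi_k:B_N\to C_k$ of $C^*$-algebras, still with commuting ranges. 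The universal property of the maximal tensor product for ordinary $C^*$-algebras then provides a $*$-homomorphism $A_N\otimes B_N\to C_k$, which, being a homomorphism of $C^*$-algebras, is contractive. Identifying the seminorm induced on $A\odot B$ by $p_N\otimes q_N$ with the maximal $C^*$-norm on $A_N\otimes B_N$ — this is exactly the statement that $p_N\otimes q_N$ is the \emph{greatest} cross-seminorm with $(p_N\otimes q_N)(a\otimes b)=p_N(a)q_N(b)$ — contractivity reads $r_k(\eta_0(\xi))\le(p_N\otimes q_N)(\xi)$ for all $\xi\in A\odot B$. Since the $p_n\otimes q_n$ generate the topology of $A\otimes B$, this is precisely the continuity required.

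Finally, as $A\odot B$ is dense in $A\otimes B$ and $C$ is complete, $\eta_0$ extends uniquely to a continuous map $\eta:A\otimes B\to C$; the $*$-homomorphism identities persist under the extension by continuity, and the same density forces uniqueness of any $\eta$ with the prescribed values on elementary tensors. The main obstacle is the estimate in the middle paragraph: everything hinges on correctly matching the greatest cross-seminorm $p_N\otimes q_N$ with the maximal $C^*$-tensor-product norm on the quotients $A_N\otimes B_N$, so that contractivity of $C^*$-algebra homomorphisms can be imported to bound $\eta_0$. One may equivalently run the whole argument through the identification $C=\varprojlim_k C_k$, assembling $\eta$ from the compatible family of $C^*$-algebra maps $A\otimes B\to C_k$; the required estimates are the same.
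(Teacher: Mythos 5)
Your argument is correct, and it is essentially the standard proof of this fact: the paper itself gives no proof but defers to Phillips, whose argument likewise reduces to the $C^*$-algebra universal property by factoring $\pi_k\circ\varphi$ and $\pi_k\circ\psi$ through the quotient $C^*$-algebras $A_N$, $B_N$ and identifying $p_N\otimes q_N$ with the pullback of the maximal $C^*$-norm on $A_N\otimes B_N$ (which holds because any $C^*$-cross-seminorm for $p_N,q_N$ kills $\ker p_N\odot B+A\odot\ker q_N$ and hence descends). The one step you flag as the hinge is indeed the only place requiring care, and your justification of it is sound.
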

Directly from the definition of the maximal tensor product we deduce that
if $f:A_1\to A_2$, $g:B_1\to B_2$ are continuous $*$-homomorphisms, then 
$f\otimes g:A_1\odot B_1\to A_2\odot B_2$
extends to a continuous $*$-homomorphism $A_1\otimes B_1\to A_2\otimes B_2$.

\begin{prop}[{\cite[Proposition 3.2]{PhiInv}}]\label{productinvlim}
If $A=\varprojlim_n A_n$ and $B=\varprojlim_n B_n$, then $A\otimes B=\varprojlim_{n} A_n\otimes B_n$.
\end{prop}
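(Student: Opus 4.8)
The plan is to reduce the statement to an identification at the level of the defining $C^*$-seminorms. By definition $A\otimes B$ is a $\sigma$-$C^*$-algebra whose topology is generated by the increasing family $(p_n\otimes q_n)_{n\in\N}$, so by the structure theory recalled above it is canonically the inverse limit $\varprojlim_n (A\otimes B)/\ker(p_n\otimes q_n)$ of the $C^*$-algebras obtained as quotients by these seminorm-kernels. The whole proposition therefore follows once I show that, for each $n$, the natural map identifies $(A\otimes B)/\ker(p_n\otimes q_n)$ isometrically with the $C^*$-algebraic maximal tensor product $A_n\otimes B_n$, compatibly with the connecting homomorphisms. A comparison homomorphism $A\otimes B\to\varprojlim_n(A_n\otimes B_n)$ is available for free: the quotient maps $\pi^A_n\colon A\to A_n$ and $\pi^B_n\colon B\to B_n$ induce, by the functoriality of $\otimes$ noted above, homomorphisms $\pi^A_n\otimes\pi^B_n\colon A\otimes B\to A_n\otimes B_n$ which are compatible with the connecting maps, hence assemble through the universal property of the inverse limit. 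It then remains to see that this map is an isomorphism, and this is exactly the level-wise claim.

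The key step is to show that the seminorm $p_n\otimes q_n$ on the algebraic tensor product $A\odot B$ coincides with the pullback $\nu_n:=\|\cdot\|_{\max}\circ(\pi^A_n\odot\pi^B_n)$ of the maximal $C^*$-norm on $A_n\odot B_n$ along the canonical surjection $\pi_n:=\pi^A_n\odot\pi^B_n\colon A\odot B\to A_n\odot B_n$. One inequality is immediate: $\nu_n$ is a $C^*$-seminorm (a $C^*$-norm composed with a $*$-homomorphism) and on elementary tensors $\nu_n(a\otimes b)=\|\overline a\|_{A_n}\|\overline b\|_{B_n}=p_n(a)q_n(b)$, so $\nu_n$ is a $C^*$-cross-seminorm determined by $p_n,q_n$ and hence $\nu_n\leq p_n\otimes q_n$ by maximality. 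For the reverse inequality I argue that every $C^*$-cross-seminorm $\sigma$ determined by $p_n,q_n$ — in particular $\sigma=p_n\otimes q_n$ — vanishes on $\ker\pi_n$. This is where the real content lies, and I expect it to be the main obstacle: using the $C^*$-identity and the cross property on the elementary tensor $x^*x\otimes b^*b$ one gets $\sigma(x\otimes b)^2=\sigma(x^*x\otimes b^*b)=p_n(x^*x)q_n(b^*b)=0$ whenever $x\in\ker p_n$, and symmetrically $\sigma(a\otimes y)=0$ for $y\in\ker q_n$; since $\ker\sigma$ is a linear subspace and $\ker\pi_n=\ker p_n\odot B+A\odot\ker q_n$ is spanned by such elements, $\sigma$ vanishes on $\ker\pi_n$.

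Granting the descent, $p_n\otimes q_n$ factors through $\pi_n$ as a $C^*$-cross-seminorm $\overline{p_n\otimes q_n}$ on $A_n\odot B_n$, which satisfies $\overline{p_n\otimes q_n}\leq\|\cdot\|_{\max}$ by maximality of the latter, i.e.\ $p_n\otimes q_n\leq\nu_n$; together with the first inequality this gives $p_n\otimes q_n=\nu_n$. In particular $\ker(p_n\otimes q_n)=\ker\pi_n$ (as $\|\cdot\|_{\max}$ is a norm), so $(A\odot B)/\ker(p_n\otimes q_n)=A_n\odot B_n$ carries precisely the maximal norm; completing, the $n$-th quotient $(A\otimes B)/\ker(p_n\otimes q_n)$ is the $C^*$-algebraic maximal tensor product $A_n\otimes B_n$. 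Finally I check that under these identifications the connecting homomorphisms of the two inverse systems agree: both $(A\otimes B)/\ker(p_{m}\otimes q_{m})\to(A\otimes B)/\ker(p_n\otimes q_n)$ and $\pi^A_{nm}\otimes\pi^B_{nm}\colon A_m\otimes B_m\to A_n\otimes B_n$ are induced from the single surjection $A\odot B\to A_n\odot B_n$, so they coincide on the dense subalgebra $A_m\odot B_m$ and hence everywhere. The comparison map is thus an inverse limit of isomorphisms of $C^*$-algebras, proving $A\otimes B=\varprojlim_n(A_n\otimes B_n)$.
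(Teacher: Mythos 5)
Your proof is correct and takes essentially the standard route: the paper itself offers no argument for this proposition, deferring entirely to \cite[Proposition 3.2]{PhiInv}, and your identification of $p_n\otimes q_n$ with the pullback of the maximal $C^*$-norm on $A_n\odot B_n$ (via the observation that any $C^*$-cross-seminorm determined by $p_n,q_n$ kills $\ker p_n\odot B+A\odot\ker q_n$ and hence descends, where it is dominated by $\|\cdot\|_{\max}$) is precisely the content of Phillips's proof.
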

This proposition allows the straightforward generalization of some known facts of $C^*$-algebras:
\begin{cor}
For any $\sigma$-locally compact space $\mathcal{X}=\bigcup_{n\in\N}X_n$ and coefficient $\sigma$-$C^*$-algebra $D$ the equation
$$C_0(\mathcal{X};D)=C_0(\mathcal{X})\otimes D$$
holds.
\end{cor}
\begin{proof}
$C_0(\mathcal{X};D)=\varprojlim_{n\in\N}C_0(X_n;D_n)=\varprojlim_{n\in\N}C_0(X_n)\otimes D_n=C_0(\mathcal{X})\otimes D$.
\end{proof}
\begin{cor}
Let $\mathcal{X}=\bigcup_{n\in\N}X_n$ and $\mathcal{Y}=\bigcup_{n\in\N}Y_n$ be $\sigma$-locally compact spaces. Then 
$$C_0(\mathcal{X}\times \mathcal{Y})=C_0(\mathcal{X})\otimes C_0(\mathcal{Y}).$$
\end{cor}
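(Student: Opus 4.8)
The plan is to reduce the identity to the defining inverse systems and apply the corresponding classical fact for $C^*$-algebras termwise, exactly as in the proof of the preceding corollary. Both sides are $\sigma$-$C^*$-algebras presented as inverse limits over $\N$, so it suffices to identify the two inverse systems.

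First I would unwind the left-hand side. By Definition \ref{sigmaspacesmonoidalcategory} the cartesian product $\mathcal{X}\times\mathcal{Y}$ is the $\sigma$-locally compact space given by the sequence $(X_n\times Y_n)_{n\in\N}$, so by the definition of the function algebra on a $\sigma$-locally compact space we have $C_0(\mathcal{X}\times\mathcal{Y})=\varprojlim_n C_0(X_n\times Y_n)$, the connecting maps being restriction to the closed subspaces $X_n\times Y_n\subset X_m\times Y_m$. Next, for each fixed $n$ I would invoke the classical isomorphism $C_0(X_n\times Y_n)\cong C_0(X_n)\otimes C_0(Y_n)$; here one uses that commutative $C^*$-algebras are nuclear, so the maximal tensor product $\otimes$ employed throughout this paper agrees with the minimal one appearing in the usual statement. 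Finally, since $C_0(\mathcal{X})=\varprojlim_n C_0(X_n)$ and $C_0(\mathcal{Y})=\varprojlim_n C_0(Y_n)$, Proposition \ref{productinvlim} gives $C_0(\mathcal{X})\otimes C_0(\mathcal{Y})=\varprojlim_n C_0(X_n)\otimes C_0(Y_n)$. Assembling the chain
$$C_0(\mathcal{X}\times\mathcal{Y})=\varprojlim_n C_0(X_n\times Y_n)=\varprojlim_n C_0(X_n)\otimes C_0(Y_n)=C_0(\mathcal{X})\otimes C_0(\mathcal{Y})$$
then yields the result.

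The only point requiring care -- the main obstacle, such as it is -- is the compatibility of the termwise isomorphisms with the structure maps of the two inverse systems, so that they genuinely define an isomorphism of inverse systems and hence of inverse limits. This is the naturality of the classical identification: the isomorphism sends an elementary tensor $f\otimes g$ to the function $(x,y)\mapsto f(x)g(y)$, and restricting this function to a closed subspace $X_n\times Y_n$ manifestly equals the product of the restrictions of $f$ and $g$. Thus restriction on the left corresponds to the tensor product of the restriction maps on the right, the squares in the inverse systems commute, and the isomorphism passes to the inverse limit. I would remark that this is the same template as the preceding corollary, with the coefficient algebra $D$ there replaced by the second factor $C_0(\mathcal{Y})$.
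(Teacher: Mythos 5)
Your proof is correct and follows exactly the paper's own one-line argument: write both sides as inverse limits over $n$, apply the classical identification $C_0(X_n\times Y_n)\cong C_0(X_n)\otimes C_0(Y_n)$ termwise, and invoke Proposition \ref{productinvlim}. The extra care you take regarding nuclearity and compatibility with the restriction maps is a reasonable elaboration of details the paper leaves implicit.
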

\begin{proof}
$C_0(\mathcal{X}\times \mathcal{Y})=\varprojlim_nC_0(X_n\times Y_n)=\varprojlim_nC_0(X_n)\otimes C_0(Y_n)=C_0(\mathcal{X})\otimes C_0(\mathcal{Y}).$
\end{proof}

The property of maximal tensor products of $\sigma$-$C^*$-algebra, which is most important to us, is exactness:
\begin{thm}\label{tensorexact}
The maximal tensor product of $\sigma$-$C^*$-algebras is exact.
\end{thm}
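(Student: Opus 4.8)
The plan is to reduce the statement entirely to the corresponding fact for ordinary $C^*$-algebras, namely that the maximal tensor product of $C^*$-algebras is exact: for every short exact sequence of $C^*$-algebras $0\to I\to A\to B\to 0$ and every $C^*$-algebra $C$, the sequence $0\to I\otimes C\to A\otimes C\to B\otimes C\to 0$ is again short exact. I would take this classical theorem as the one external input; all the $\sigma$-specific content is then bookkeeping carried by the inverse-limit descriptions already established above.

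So, fix a $\sigma$-$C^*$-algebra $C=\varprojlim_n C_n$ and an exact sequence of $\sigma$-$C^*$-algebras $0\to I\xrightarrow{\alpha}A\xrightarrow{\beta}B\to 0$. First I would invoke Proposition \ref{exactseqproperties}(\ref{exactseqpropertylim}) to present this sequence as an inverse limit over $n\in\N$ of exact sequences of $C^*$-algebras
$$0\to I_n\to A_n\to B_n\to 0$$
whose structure maps $I_m\to I_n$, $A_m\to A_n$, $B_m\to B_n$ (for $n\le m$) are surjective; after passing to a common cofinal reindexing I may assume the same index set $\N$ is used for the filtration of $C$.

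Second, I would tensor each level with $C_n$. By the classical exactness of the maximal tensor product of $C^*$-algebras, each
$$0\to I_n\otimes C_n\to A_n\otimes C_n\to B_n\otimes C_n\to 0$$
is exact. The structure maps of this new inverse system are the maps $A_m\otimes C_m\to A_n\otimes C_n$ (and likewise for $I$ and $B$) induced by the respective surjections; since the maximal tensor product of two surjective $*$-homomorphisms has dense image, and the image of a $*$-homomorphism of $C^*$-algebras is closed, these maps are again surjective. Finally, I would pass to the inverse limit: by Proposition \ref{productinvlim} the limits of the three columns are exactly $I\otimes C$, $A\otimes C$ and $B\otimes C$, so the limiting sequence reads $0\to I\otimes C\to A\otimes C\to B\otimes C\to 0$. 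Being an inverse limit with surjective structure maps of exact sequences of $C^*$-algebras, it is exact by the converse direction of Proposition \ref{exactseqproperties}(\ref{exactseqpropertylim}).

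I expect no deep conceptual obstacle here: the genuinely hard content is hidden in the $C^*$-algebra input, the exactness of the maximal tensor product, which is a nontrivial but standard theorem. Everything $\sigma$-specific is routine and handled by Propositions \ref{exactseqproperties} and \ref{productinvlim}. The one small point one must verify by hand is the surjectivity of the structure maps of the tensored inverse system, since that surjectivity is precisely the hypothesis needed to apply the converse of Proposition \ref{exactseqproperties}(\ref{exactseqpropertylim}) at the last step.
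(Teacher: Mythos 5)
Your proposal is correct and follows essentially the same route as the paper: present the exact sequence as an inverse limit of exact sequences of $C^*$-algebras with surjective structure maps via Proposition \ref{exactseqproperties}, tensor levelwise using exactness of the maximal tensor product of $C^*$-algebras, and pass back to the limit via Propositions \ref{exactseqproperties} and \ref{productinvlim}. Your explicit verification that the tensored structure maps remain surjective is a small point the paper leaves implicit, but it is not a different argument.
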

\begin{proof}
Given a short exact sequence
$$0\to I\to A\to B\to 0$$
of $\sigma$-$C^*$-algebras, Proposition \ref{exactseqproperties} 
 allows us to write 
$$I=\varprojlim I_n,\quad A=\varprojlim A_n,\quad B=\varprojlim B_n$$
where the directed systems fit into a commutative diagram
$$\xymatrix{
&\vdots\ar[d]&\vdots\ar[d]&\vdots\ar[d]&
\\0\ar[r]&I_{n+1}\ar[d]\ar[r]&A_{n+1}\ar[d]\ar[r]&B_{n+1}\ar[d]\ar[r]&0
\\0\ar[r]&I_{n}\ar[d]\ar[r]&A_{n}\ar[d]\ar[r]&B_{n}\ar[d]\ar[r]&0
\\&\vdots&\vdots&\vdots&
}$$
with exact rows and all vertical maps surjective.

The maximal tensor product of $C^*$-algebras is exact, so if $D=\varprojlim D_n$ is another $\sigma$-$C^*$-algebra, then
$$\xymatrix{
&\vdots\ar[d]&\vdots\ar[d]&\vdots\ar[d]&
\\0\ar[r]&I_{n+1}\otimes D_{n+1}\ar[d]\ar[r]&A_{n+1}\otimes D_{n+1}\ar[d]\ar[r]&B_{n+1}\otimes D_{n+1}\ar[d]\ar[r]&0
\\0\ar[r]&I_{n}\otimes D_{n}\ar[d]\ar[r]&A_{n}\otimes D_{n}\ar[d]\ar[r]&B_{n}\otimes D_{n}\ar[d]\ar[r]&0
\\&\vdots&\vdots&\vdots&
}$$
has exact rows and all vertical maps are surjective. Again by Proposition \ref{exactseqproperties} 
 the sequence
$$0\to \varprojlim I_n\otimes D_n\to \varprojlim A_n\otimes D_n\to \varprojlim B_n\otimes D_n\to 0$$
is exact and the claim follows from Proposition \ref{productinvlim}.
\end{proof}
\begin{cor}
If $I_{1,2}\subset A_{1,2}$ are ideals, then $I_1\otimes I_2\to A_1\otimes A_2$ is a homeomorphism onto its image. Thus, $I_1\otimes I_2$ is an ideal in $A_1\otimes A_2$.
\end{cor}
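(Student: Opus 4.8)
The plan is to realize $I_1\otimes I_2$, sent into $A_1\otimes A_2$ by the canonical homomorphism, simultaneously in two ways: as a composite of two successive ideal inclusions, which will give the topological assertion, and as an intersection of two ideals, which will give that its image is an ideal. The only real input is the exactness of the maximal tensor product, Theorem \ref{tensorexact}, together with two elementary facts recorded in the excerpt: the inclusion of an ideal into a $\sigma$-$C^*$-algebra is a homeomorphism onto its image (apply Proposition \ref{exactseqproperties}(1) to $0\to I\to A\to A/I\to 0$), and a finite intersection of ideals is again an ideal.

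For the statement that $I_1\otimes I_2\to A_1\otimes A_2$ is a homeomorphism onto its image I would argue in two steps. Tensoring the exact sequence $0\to I_2\to A_2\to A_2/I_2\to 0$ with $A_1$ and invoking Theorem \ref{tensorexact} shows that $A_1\otimes I_2$ is an ideal in $A_1\otimes A_2$, so the inclusion $A_1\otimes I_2\to A_1\otimes A_2$ is a homeomorphism onto its image. Tensoring $0\to I_1\to A_1\to A_1/I_1\to 0$ with $I_2$ shows likewise that $I_1\otimes I_2$ is an ideal in $A_1\otimes I_2$, with the inclusion $I_1\otimes I_2\to A_1\otimes I_2$ a homeomorphism onto its image. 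By functoriality of $\otimes$ these two inclusions compose to the canonical map, and a composite of homeomorphisms onto images is again one; this settles the topological part and, in particular, identifies the intrinsic topology of $I_1\otimes I_2$ with the subspace topology it inherits from $A_1\otimes A_2$.

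It remains to see that the image is an ideal, and for this I would identify it with an intersection. Let $q_1\colon A_1\to A_1/I_1$ and $q_2\colon A_2\to A_2/I_2$ be the quotient maps; by Theorem \ref{tensorexact} the homomorphisms $q_1\otimes\id$ and $\id\otimes q_2$ have kernels $I_1\otimes A_2$ and $A_1\otimes I_2$, both ideals in $A_1\otimes A_2$. I claim the image of $I_1\otimes I_2$ equals $(I_1\otimes A_2)\cap(A_1\otimes I_2)$. The inclusion from left to right is immediate, since $I_1\otimes I_2$ sits inside each factor. For the reverse inclusion, restrict $\id\otimes q_2$ to the ideal $I_1\otimes A_2$; by functoriality this restriction is $\id_{I_1}\otimes q_2\colon I_1\otimes A_2\to I_1\otimes(A_2/I_2)$ followed by the inclusion $I_1\otimes(A_2/I_2)\to A_1\otimes(A_2/I_2)$, the latter being injective by one more application of Theorem \ref{tensorexact}. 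A further use of exactness identifies the kernel of $\id_{I_1}\otimes q_2$ with $I_1\otimes I_2$, so any element of $I_1\otimes A_2$ annihilated by $\id\otimes q_2$ already lies in $I_1\otimes I_2$. Since an intersection of two ideals is an ideal, the image is an ideal, and the corollary follows.

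The main obstacle is precisely the reverse inclusion in the last step: a priori $(I_1\otimes A_2)\cap(A_1\otimes I_2)$ could be strictly larger than $I_1\otimes I_2$, and excluding this is exactly the point at which genuine exactness of the tensor product is indispensable, rather than mere compatibility on elementary tensors. One must also verify that the three identifications of $I_1\otimes I_2$, $A_1\otimes I_2$, and $I_1\otimes A_2$ with subspaces of $A_1\otimes A_2$ are mutually compatible; this is guaranteed by the functoriality of $\otimes$ with respect to the various inclusion homomorphisms, which makes all the relevant squares of canonical maps commute.
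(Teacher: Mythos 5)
Your proof is correct, and it is essentially the argument the paper intends: the corollary is stated without proof as an immediate consequence of Theorem \ref{tensorexact}, and your factorization $I_1\otimes I_2\to A_1\otimes I_2\to A_1\otimes A_2$ together with the identification of the image with $(I_1\otimes A_2)\cap(A_1\otimes I_2)$ is exactly the elaboration required (indeed, the paper itself uses that intersection identity implicitly in the proof of Corollary \ref{tensorquotients}). No gaps.
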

\begin{cor}\label{tensorquotients}
There are canonical isomorphisms of $\sigma$-$C^*$-algebras 
\begin{align*}
(A_1\otimes A_2)/(I_1\otimes A_2)&\cong A_1/I_1\otimes A_2,
\\(A_1\otimes A_2)/(A_1\otimes I_2)&\cong A_1\otimes A_2/I_2,
\\(A_1\otimes A_2)/(A_1\otimes I_2+I_1\otimes A_2)&\cong A_1/I_1\otimes A_2/I_2.
\end{align*}
\end{cor}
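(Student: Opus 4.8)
The plan is to obtain all three isomorphisms from the exactness of the maximal tensor product established in Theorem \ref{tensorexact}, treating the first one as the fundamental case and deducing the others by symmetry and by iterating. For the first isomorphism I would apply Theorem \ref{tensorexact} to the short exact sequence $0\to I_1\to A_1\to A_1/I_1\to 0$, tensored on the right by $A_2$, obtaining the exact sequence $0\to I_1\otimes A_2\to A_1\otimes A_2\to (A_1/I_1)\otimes A_2\to 0$ of $\sigma$-$C^*$-algebras. By the definition of exactness the left-hand map is a homeomorphism onto its image, so $I_1\otimes A_2$ is a closed ideal, and the right-hand surjection descends to a homeomorphic isomorphism $(A_1\otimes A_2)/(I_1\otimes A_2)\cong (A_1/I_1)\otimes A_2$.

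The second isomorphism is the first one with the roles of the two tensor factors exchanged. Since the maximal tensor product is commutative --- its construction is symmetric, so Proposition \ref{tensormaxunivprop} (or Proposition \ref{productinvlim} together with commutativity for $C^*$-algebras) yields a natural isomorphism $A_1\otimes A_2\cong A_2\otimes A_1$ --- I would simply apply Theorem \ref{tensorexact} to $0\to I_2\to A_2\to A_2/I_2\to 0$ tensored on the left by $A_1$.

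For the third isomorphism I would compose the two surjections
\[
A_1\otimes A_2\xrightarrow{\id\otimes q_2}A_1\otimes(A_2/I_2)\xrightarrow{q_1\otimes\id}(A_1/I_1)\otimes(A_2/I_2),
\]
where $q_i\colon A_i\to A_i/I_i$ are the quotient maps. Each factor is a surjection by Theorem \ref{tensorexact}, the first with kernel $A_1\otimes I_2$ (the second isomorphism) and the second with kernel $I_1\otimes(A_2/I_2)$ (the first isomorphism applied with $A_2/I_2$ in place of $A_2$). Hence the composite is a surjection onto $(A_1/I_1)\otimes(A_2/I_2)$, and everything reduces to computing its kernel $(\id\otimes q_2)^{-1}\big(I_1\otimes(A_2/I_2)\big)$.

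The main obstacle is precisely this kernel computation, and it is where I expect the surjectivity clauses of Theorem \ref{tensorexact} to do the real work. Tensoring $0\to I_2\to A_2\to A_2/I_2\to 0$ on the left by $I_1$ shows that $\id\otimes q_2$ carries $I_1\otimes A_2$ \emph{onto} $I_1\otimes(A_2/I_2)$, rather than merely onto a dense copy. Since $\id\otimes q_2$ is a surjective homomorphism with kernel $A_1\otimes I_2$, the elementary identity $f^{-1}(f(M))=M+\ker f$ for additive groups then gives $(\id\otimes q_2)^{-1}\big(I_1\otimes(A_2/I_2)\big)=I_1\otimes A_2+A_1\otimes I_2$; being the preimage of a closed ideal under a continuous homomorphism, this algebraic sum is automatically closed, so no closure operation is needed in the statement. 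Exactness of the composite surjection then yields $(A_1\otimes A_2)/(A_1\otimes I_2+I_1\otimes A_2)\cong (A_1/I_1)\otimes(A_2/I_2)$. Finally, tracking elementary tensors $a_1\otimes a_2$ through each map confirms that the three isomorphisms are the canonical ones; this last verification is routine.
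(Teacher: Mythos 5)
Your argument is correct, and for the first two isomorphisms it coincides with the paper's: both follow immediately from Theorem \ref{tensorexact} applied to $0\to I_i\to A_i\to A_i/I_i\to 0$ tensored with the other factor, together with the definition of exactness for $\sigma$-$C^*$-algebras. For the third isomorphism, however, you take a genuinely different route. The paper proceeds purely algebraically, by a chain of Noether isomorphism theorems
\[
\frac{A_1\otimes A_2}{A_1\otimes I_2+I_1\otimes A_2}\cong \frac{(A_1\otimes A_2)/(I_1\otimes A_2)}{(A_1\otimes I_2)/(I_1\otimes A_2\cap A_1\otimes I_2)}\cong\frac{A_1/I_1\otimes A_2}{A_1/I_1\otimes I_2}\cong A_1/I_1\otimes A_2/I_2,
\]
which requires the identification $I_1\otimes A_2\cap A_1\otimes I_2=I_1\otimes I_2$ (asserted without comment) and then invokes automatic continuity (Proposition \ref{autocont}) to upgrade the algebraic isomorphism to one of $\sigma$-$C^*$-algebras. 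You instead compose the two quotient surjections $A_1\otimes A_2\to A_1\otimes(A_2/I_2)\to (A_1/I_1)\otimes(A_2/I_2)$ and compute the kernel of the composite as $(\id\otimes q_2)^{-1}\bigl(I_1\otimes(A_2/I_2)\bigr)=I_1\otimes A_2+A_1\otimes I_2$, using that $\id\otimes q_2$ maps $I_1\otimes A_2$ onto $I_1\otimes(A_2/I_2)$ (Theorem \ref{tensorexact} tensored with $I_1$, together with the preceding corollary identifying these abstract tensor products with the corresponding ideals). Your approach buys three things: it avoids the intersection identity entirely, it shows as a byproduct that the sum $A_1\otimes I_2+I_1\otimes A_2$ is already closed (being the preimage of a closed ideal under a continuous homomorphism), and continuity of the resulting isomorphism is built in rather than appealed to afterwards. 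The paper's version is shorter to write but leans on those two unproved facts; yours is slightly longer but self-contained at each step.
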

\begin{proof}
The first two are direct consequences of exactness of the tensor product and the definition of short exact sequences of $\sigma$-$C^*$-algebras. For the third, note that algebraically we have
\begin{align*}
\frac{A_1\otimes A_2}{A_1\otimes I_2+I_1\otimes A_2}&\cong \frac{(A_1\otimes A_2)/(I_1\otimes A_2)}{(A_1\otimes I_2+I_1\otimes A_2)/(I_1\otimes A_2)}
\\&\cong \frac{(A_1\otimes A_2)/(I_1\otimes A_2)}{(A_1\otimes I_2)/(I_1\otimes A_2\cap A_1\otimes I_2)}
\\&= \frac{(A_1\otimes A_2)/(I_1\otimes A_2)}{(A_1\otimes I_2)/(I_1\otimes I_2)}\cong \frac{A_1/I_1\otimes A_2}{A_1/I_1\otimes I_2}
\\&\cong A_1/I_1\otimes A_2/I_2.
\end{align*}
Continuity of this isomorphism and its inverse are automatic by Proposition \ref{autocont}.
\end{proof}
This corollary will come in handy for constructing homomorphisms 
$$
A_1/I_1\otimes A_2\to B,\quad A_1\otimes A_2/I_2\to B,\quad A_1/I_1\otimes A_2/I_2\to B
$$
into another $\sigma$-$C^*$-algebra $B$.

The notion of homotopy is of course the canonical one:
\begin{mdef}
A homotopy between two homomorphisms $f,g:\,A\to B$ is a homomorphism $A\to C[0,1]\otimes B$ such that evaluation at $0,1$ yields $f$ and $g$.
\end{mdef}
A homotopy $H:\,[0,1]\times \mathcal{X}\to\mathcal{Y}$ between two proper continuous maps $f,g:\,\mathcal{X}\to\mathcal{Y}$ gives rise to a homotopy
$H^*:\,C_0(\mathcal{Y})\to C[0,1]\otimes C_0(\mathcal{X})$ between $f^*,g^*$.

%%%%%%%%%%%%%%%%%%%%%%%%%%%%%%%%%%%%%%%%%%%%%%%%%%%%%%%%

\section{{$K$-theory of $\sigma$-$C^*$-algebras}}\label{sec:KTheory}
The construction of the coarse co-assembly map in \cite{EmeMey} requires $K$-theory for $\sigma$-$C^*$-algebras. This theory was developed in  \cite{PhiRep} (see also \cite{PhiFre}). To make ourselves independent of a concrete picture of $K$-theory, we state the properties we need in the following five axioms. Their $C^*$-algebraic counterparts are well known.

\begin{axiom}\label{Ktheoryaxiomfunctor}
$K$-theory of $\sigma$-$C^*$-algebras  is a covariant homotopy functor from the category of $\sigma$-$C^*$-algebras into the category of $\Z/2$-graded abelian groups.
\end{axiom}

\begin{axiom}\label{Ktheoryaxiomsequence}
Naturally associated to each exact sequence $0\to I\to A\to B\to 0$ of $\sigma$-$C^*$-algebras is a six term exact sequence
$$\xymatrix{
K_0(I)\ar[r]&K_0(A)\ar[r]&K_0(B)\ar[d]
\\K_1(B)\ar[u]&K_1(A)\ar[l]&K_1(I).\ar[l]
}$$
\end{axiom}

\begin{axiom}\label{Ktheoryaxiomreflection}
Let $\tau:(0,1)\to(0,1),\,t\mapsto 1-t$. For any $\sigma$-$C^*$-algebra $A$, the induced homomorphisms
$$K_*(C_0(0,1)\otimes A)\xrightarrow{(\tau^*\otimes\id_A)_*}K_*(C_0(0,1)\otimes A)$$
is multiplication by $-1$.
\end{axiom}

\begin{axiom}\label{Ktheoryaxiomproduct}
There is an associative and graded commutative exterior product
$$K_i(A)\otimes K_j(B)\to K_{i+j}(A\otimes B)$$
which is natural in both variables.
\end{axiom}

\begin{axiom}\label{Ktheoryaxiomcompatibility}
The exterior product is compatible with boundary maps in the following sense:\label{Ktheorypropertyboundary}
The diagram 
$$\xymatrix{
K_i(B)\otimes K_j(D)\ar[r]\ar[d]& K_{i-1}(I)\otimes K_j(D)\ar[d]
\\K_{i+j}(B\otimes D)\ar[r]&K_{i+j-1}(I\otimes D)
}$$
commutes and the diagram
$$\xymatrix{
K_i(D)\otimes K_j(B)\ar[r]\ar[d]& K_{i}(D)\otimes K_{j-1}(I)\ar[d]
\\K_{i+j}(D\otimes B)\ar[r]&K_{i+j-1}(D\otimes I)
}$$
commutes up to a sign $(-1)^i$ whenever the upper horizontal arrows are connecting homomorphisms associated to a short exact sequence  $0\to I\to A\to B\to 0$ of $\sigma$-$C^*$-algebras, the lower horizontal arrows are the connecting homomorphism associated to the short exact sequence obtained by tensoring the first with another $\sigma$-$C^*$-algebra $D$ and the vertical maps are exterior multiplication.
\end{axiom}

Axioms \ref{Ktheoryaxiomfunctor}--\ref{Ktheoryaxiomreflection} were proved in \cite{PhiRep,PhiFre}. However, those versions of $K$-theory for $\sigma$-$C^*$-algebras are not very well adapted to the construction of products.
We use the products of Cuntz' $kk$-theory \cite{CunBiv}, which is much more general then $K$-theory for $\sigma$-$C^*$-algebras, to construct an exterior product satisfying Axioms \ref{Ktheoryaxiomproduct}, \ref{Ktheoryaxiomcompatibility} in Appendix \ref{sec:kkTheory}. Doing this forces us to work with Fréchet algebras which we shall review in Appendix \ref{sec:Frechet}.

The way we prove these axioms in the appendices is somewhat unsatisfactory as we have to mix up results from different theories. 
It would be nice to have a picture of $K$-theory for $\sigma$-$C^*$-algebras which is well adapted to both products and boundary maps and allows a  direct approach to proving their compatibility as in Axiom  \ref{Ktheoryaxiomcompatibility}.

%%%%%%%%%%%%%%%%%%%%%%%%%%%%%%%%%%%%%%%%%%%%%%%%%%%%%%%%

\section{Unreduced coarse co-assembly}\label{sec:coassembly}
This section gives an overview over the concept of coarse co-assembly. There is a variety of maps which deserve to be called coarse co-assembly maps:
\begin{enumerate}
\item\label{coassemblytype1} Let $X$ be a countably generated, unbounded coarse space and $D$ a $C^*$-algebra. The coarse co-assembly map of Emerson and Meyer \cite{EmeMey} (see Definition \ref{coassemblydefEmeMey} below) is a map
$$\mu^*:\tilde K_{1-*}(\cfrak(X;D))\to KX^*(X;D)\,.$$
Its domain is the reduced $K$-theory of the stable Higson corona,
$$\tilde K_{*}(\cfrak({X},D)):=K_{*}(\cfrak({X},D))/\,\im[K_{*}(D\otimes \K)\xrightarrow[{\rm const.\ fu's}]{\rm incl.\ as} K_{*}(\cfrak({X},D))]\,.$$
Its target $KX^*(X;D):=K_{-*}(C_0(\mathcal{P}_Z)\otimes D)$ is the $K$-theory of the Rips complex $\mathcal{P}_Z$ with coefficients in $D$. We recall the Rips complex construction below. 
In case $D=\C$, the coarse co-assembly map is dual to the coarse assembly map
$$\mu: KX_*(X)\to K_*(C^*(X))$$
($C^*(X)$ is the Roe algebra of $X$) 
in the sense that there are natural pairings 
$$KX^*(X)\times KX_*(X)\to \Z,\qquad \tilde K_{1-*}(\cfrak(X))\times K_*(C^*X)\to \Z$$
such that 
$$\langle x,\mu(y)\rangle=\langle \mu^*(x),y\rangle\qquad \forall x\in\tilde K_{1-*}(\cfrak(X)),y\in KX_*(X).$$
\item\label{coassemblytype2}  If $X$ is a uniformly contractible metric space of bounded geometry then $KX^*(X,D)\cong K_{-*}(C_0(X;D))=:K^*(X;D)$ and the coarse co-assembly map of \ref{coassemblytype1} corresponds to a map
$$\tilde K_{1-*}(\cfrak(X;D))\to K^{*}(X;D).$$
In fact, this map exists for any $\sigma$-coarse space $\mathcal{X}$ instead of $X$. We call it the uncoarsened version of the co-assembly map.
\item\label{coassemblytype3}  There are versions of \ref{coassemblytype1} and \ref{coassemblytype2} where the left hand side displays the unreduced $K$-theory of the stable Higson corona. We compensate this on the right hand side by removing a point of the Rips complex resp.\ the $\sigma$-coarse space. Thus, the unreduced co-assembly maps are
\begin{align*}
K_{1-*}(\cfrak(X;D))&\to KX^*(X\setminus\{pt\};D),
\\K_{1-*}(\cfrak({X};D))&\to K^*({X}\setminus\{pt\};D)
\end{align*}
(see Definition \ref{coassemblydefUnredCoar} for the first one). 
They can be obtained from the reduced versions of \ref{coassemblytype1} and \ref{coassemblytype2} by gluing a ray $\R^+=[0,\infty)$ to the space which acts as a distinct base point at infinity.
\item\label{coassemblytype4}  
The most general co-assembly map defined in Definition \ref{coassemblydefUnredUncUnst} below uses the  unstable Higson corona $\kfrak(X,D)$ instead of the stable Higson corona.
For our purposes it is more convenient to do the calculations with the unstable version, as we do not have to keep track of the compact operators everywhere. Eventually we will always return to the stable algebras by replacing $D$ by $D\otimes \K$.
\end{enumerate}
Details on the co-assembly maps  of \ref{coassemblytype1} and \ref{coassemblytype2} can be found in \cite{EmeMey}.
However, the ring structures considered here work only in the unreduced cases. 
The reason for this is that the ring structure which we shall construct on $K_*(\cfrak(X))$  has a unit, namely the unit of $\Z\cong K_*(\K)\subset K_*(\cfrak(X))$, and exactly this unit is identified with $0$ in $\tilde K_*(\cfrak(X))$.

We will now take a closer look at the objects mentioned above starting with the Rips complex.
We briefly recall its construction as presented in \cite[Section 4]{EmeMey}.

Let $Z$ be a countably generated discrete coarse space. Fix an increasing sequence $(E_n)$ of entourages generating the coarse structure. 
Let $\mathcal{P}_Z$ be the set of probability measures on $Z$ with finite support. This is a simplicial complex whose vertices are the Dirac measures on $Z$. It is given the corresponding topology. The locally finite subcomplexes
$$P_{Z,n}:=\{\mu\in\mathcal{P}_Z\,|\,\supp\mu\times\supp\mu\subset E_n\}$$
are locally compact spaces and constitute a $\sigma$-locally compact space $\mathcal{P}_Z=\bigcup_{n\in\N}P_{Z,n}$. 
The $P_{Z,n}$ may be equipped with the coarse structure generated by the entourages
$$\{(\mu,\nu)\,|\,\supp\mu\times\supp\nu\subset E_m\},\quad m\in\N,$$
giving $\mathcal{P}_Z$ the structure of a $\sigma$-coarse space with all inclusions $Z\to P_{Z,n}$ being coarse equivalences.

\begin{mdef}[{\cite[Definition 22]{EmeMey}}]\label{defKX}
Let $X$ be a countably generated coarse space and let $D$ be a $C^*$-algebra. Let $Z\subset X$ be a countably generated, discrete coarse subspace that is coarsely equivalent to $X$. The \emph{coarse $K$-theory of $X$ with coefficients $D$} is defined as
\begin{equation}\label{originalcca}
KX^*(X;D):=K_{-*}(C_0(\mathcal{P}_Z;D)).
\end{equation}
\end{mdef}
In this definition, $Z$ is equipped with the subspace coarse structure. It exists by \cite[Lemma 4]{EmeMey}.

Furthermore, in the setting of the definition, $\cfrak(\mathcal{P}_Z;D)$ is a $C^*$-algebra isomorphic to $\cfrak(X;D)$ by Lemma \ref{coarselyequivalentfiltration}.
\begin{mdef}[{\cite[Definition 25]{EmeMey}}]\label{coassemblydefEmeMey}
Let $X$ be a countably generated, unbounded coarse space and let $D$ be a $C^*$-algebra. Choose $Z\subset X$ as in the previous definition. The \emph{coarse co-assembly map}
$$\tilde K_{1-*}(\cfrak(X;D))\to KX^*(X;D)$$
is the connecting homomorphism of the short exact sequence
$$0\to C_0(\mathcal{P}_Z;D\otimes\K)\to\cbar(\mathcal{P}_Z;D)\to\cfrak(\mathcal{P}_Z;D)\cong\cfrak(X;D)\to 0.$$
\end{mdef}
We need an unreduced version of coarse co-assembly.   
It  is obtained from the reduced version by implementing an idea of \cite{RoeFoliations}: Simply glue a ray $\R^+=[0,\infty)$ to $X$ which acts as a distinct basepoint at infinity.

Given a countably generated coarse space $X$ with a distinguished point $pt$ we define 
$$X^\to:=X\cup_{pt\sim 0}\R^+.$$
If the coarse structure on $X$ is generated by the sequence of entourages $E_n\subset X\times X$, we can equip $X^\to$ with the coarse structure generated by the sequence of entourages
\begin{align*}
E_n^\to:=E_n&\cup\{(s,t)\in\R^+\times\R^+\,|\,|s-t|\leq n\}
\\&\cup\{(x,t)\in X\times\R^+\,|\,(x,pt)\in E_n \wedge t\leq n\}
\\&\cup\{(t,x)\in \R^+\times X\,|\,(x,pt)\in E_n \wedge t\leq n\}\quad\subset X^\to\times X^\to.
\end{align*}
This coarse structure is obviously independent of the choice of the sequence $(E_n)$ and endows $X^\to$ with the structure of a coarse space.
\begin{lem}
Let $X,Z,D$ be as before.
The coarse co-assembly map of $X^\to$ can be canonically identified with the connecting homomorphism
$$K_{1-*}(\cfrak(X;D))\to K_{-*}(C_0(\mathcal{P}_Z\setminus\{pt\};D))$$
associated to the short exact sequence of $\sigma$-$C^*$-algebras
$$0\to C_0(\mathcal{P}_Z\setminus\{pt\};D\otimes\K)\to\cbar_0(\mathcal{P}_Z;D)\to\cfrak(\mathcal{P}_Z;D)\cong\cfrak(X;D)\to 0.$$
\end{lem}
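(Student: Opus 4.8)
The plan is to apply the reduced co-assembly construction of Definition \ref{coassemblydefEmeMey} to $X^\to$ and then to match the resulting short exact sequence with the one in the statement by means of a single collapsing map. I choose the discrete subspace $Z^\to=Z\cup\N\subset X^\to$, where $\N=\{0,1,2,\dots\}\subset\R^+$ and $0$ is identified with $pt\in Z$; then $\mathcal{P}_Z$ sits inside $\mathcal{P}_{Z^\to}$ as a closed subcomplex. By Definition \ref{coassemblydefEmeMey} the co-assembly map of $X^\to$ is the connecting homomorphism of $0\to C_0(\mathcal{P}_{Z^\to};D\otimes\K)\to\cbar(\mathcal{P}_{Z^\to};D)\to\cfrak(\mathcal{P}_{Z^\to};D)\to 0$ restricted to reduced $K$-theory, so two identifications are required: the reduced domain with $K_{1-*}(\cfrak(X;D))$, and the target $K_{-*}(C_0(\mathcal{P}_{Z^\to};D))$ with $K_{-*}(C_0(\mathcal{P}_Z\setminus\{pt\};D))$.

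For the domain I would use that the adjoined ray has a single end, whose stable Higson corona is $D\otimes\K$. As $X$ and $\R^+$ are glued along the bounded set $\{pt\}$, evaluation at this end splits the corona as $\cfrak(\mathcal{P}_{Z^\to};D)\cong\cfrak(X;D)\oplus(D\otimes\K)$ (passing from $\mathcal{P}_{Z^\to}$ to $X^\to$ by Lemma \ref{coarselyequivalentfiltration}), and under this splitting the constant functions $D\otimes\K$ embed diagonally. Hence the image of the constants is removed in $\tilde K_*$ precisely by projecting off the second summand, giving $\tilde K_*(\cfrak(\mathcal{P}_{Z^\to};D))\cong K_*(\cfrak(X;D))$. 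Since the functions realizing the ray-end constants already lift to $\cbar(\mathcal{P}_{Z^\to};D)$, the reduced connecting map coincides with the connecting map of the subsequence lying over the first summand $\cfrak(X;D)$, whose middle term is the subalgebra of $\cbar(\mathcal{P}_{Z^\to};D)$ of functions tending to $0$ along the ray.

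To identify this subsequence with the one in the statement I consider the map $\pi\colon\mathcal{P}_{Z^\to}\to\mathcal{P}_Z$ that transports all ray mass of a measure to the vertex $pt$. One checks routinely that $\pi$ is a basepoint-preserving, proper and coarse continuous morphism of $\sigma$-spaces which restricts to the identity on $\mathcal{P}_Z$ and collapses the ray to $pt$; note that it is proper at each filtration level because within $P_{Z^\to,n}$ only finitely many ray vertices occur. Consequently $\pi^*$ carries $\cbar_0(\mathcal{P}_Z;D)$ into the functions vanishing along the ray and $C_0(\mathcal{P}_Z\setminus\{pt\};D\otimes\K)$ into $C_0(\mathcal{P}_{Z^\to};D\otimes\K)$, and it induces the identity on the common quotient $\cfrak(X;D)$. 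This exhibits $\pi^*$ as a morphism from the sequence in the statement to the subsequence of the previous paragraph, so naturality of the boundary map (Axiom \ref{Ktheoryaxiomsequence}) shows that the reduced co-assembly of $X^\to$ equals $\pi^*_*$ composed with the connecting map $\partial$ of the statement.

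The main obstacle is then to prove that $\pi^*_*\colon K_{-*}(C_0(\mathcal{P}_Z\setminus\{pt\};D))\to K_{-*}(C_0(\mathcal{P}_{Z^\to};D))$ is an isomorphism — the rigorous form of the heuristic that adjoining a ray at $pt$ undoes the deletion of $pt$. I would argue in two steps. First, $\pi^{-1}(pt)$ is exactly the Rips complex of the ray; being properly homotopy equivalent to $\R^+$ it has $C_0$ with vanishing $K$-theory, so the restriction sequence for the closed subspace $\pi^{-1}(pt)$ identifies $C_0(\mathcal{P}_{Z^\to};D)$ with $C_0(\mathcal{P}_{Z^\to}\setminus\pi^{-1}(pt);D)$ in $K$-theory. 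Second, sliding ray mass back to $pt$ is a proper deformation retraction of $\mathcal{P}_{Z^\to}\setminus\pi^{-1}(pt)$ onto $\mathcal{P}_Z\setminus\{pt\}$, so $\pi$ induces a $K$-equivalence there. Composing the two yields the isomorphism, and together with the previous paragraphs this gives the canonical identification claimed in the lemma.
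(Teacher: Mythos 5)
Your argument is correct in substance and shares the skeleton of the paper's proof: the domain is identified via the splitting $\cfrak(X^\to;D)\cong\cfrak(X;D)\oplus\cfrak(\R^+;D)$ together with the fact that the constants induce an isomorphism onto $K_*(\cfrak(\R^+;D))$, the target identification rests on the vanishing of $K_*(C_0(\mathcal{P}_{\N};D))$, and the two connecting homomorphisms are matched by naturality. Where you genuinely diverge is in how the comparison of $\mathcal{P}_{Z^\to}$ with $\mathcal{P}_Z\setminus\{pt\}$ is organized. The paper includes the wedge $\mathcal{P}_Z\cup_{pt\sim 0}\mathcal{P}_{\N}$ into $\mathcal{P}_{Z^\to}$, constructs a rather delicate explicit retraction of the larger complex onto the wedge, and then excises $\mathcal{P}_{\N}$; you instead collapse the ray by a single map $\pi$, excise $\pi^{-1}(pt)=\mathcal{P}_{\N}$ directly from $\mathcal{P}_{Z^\to}$, and use the linear ``slide the ray mass to $pt$'' homotopy. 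Your route avoids the paper's retraction formula and has the advantage that one map $\pi^*$ simultaneously implements the morphism of extensions and the isomorphism on the $K$-theory of the ideals; the paper's route keeps every map an inclusion or restriction of function algebras, which makes the final naturality diagram entirely canonical.

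Two points need repair, though neither is fatal. First, $\pi$ is \emph{not} proper: the preimage of the single point $\delta_{pt}$ is all of $P_{\N,n}$ inside $P_{Z^\to,n}$, which is non-compact, and your justification (``only finitely many ray vertices occur within $P_{Z^\to,n}$'') is false --- all pure-ray simplices occur at every filtration level; only the simplices meeting $Z\setminus\{pt\}$ have their ray vertices bounded by $n$. What is true, and what you actually use, is that $\pi$ restricted to the open set $\pi^{-1}(\mathcal{P}_Z\setminus\{pt\})$ is proper: a compact subset of $P_{Z,n}$ avoiding $\delta_{pt}$ forces a positive lower bound on the mass carried by $Z\setminus\{pt\}$, which in turn confines the ray vertices of any preimage measure to $\{0,\dots,n\}$. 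Since every function you pull back vanishes at $pt$, this restricted properness suffices for all the mapping properties of $\pi^*$ you need (and the same observation is what makes your deformation retraction in the last paragraph proper). Second, the stable Higson corona of the ray is not $D\otimes\K$ as an algebra; only the map $K_*(D\otimes\K)\to K_*(\cfrak(\R^+;D))$ induced by the constant functions is an isomorphism (Theorem 30 of Emerson--Meyer), which is the statement your reduction of the domain actually requires. With these two formulations corrected, your proof is complete.
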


\begin{proof}
We obviously have $\cfrak(X^\to;D)=\cfrak(X;D)\oplus\cfrak(\R^+;D)$, so
$$K_*(\cfrak(X^\to;D))=K_*(\cfrak(X;D))\oplus K_*(\cfrak(\R^+;D)).$$
Furthermore, the map $K_*(D\otimes \K)\to K_*(\cfrak(\R^+;D))$ is an isomorphism by \cite[Theorem 30]{EmeMey}.
It follows that the composition
\begin{equation}\label{unreductionanalyticside}
K_*(\cfrak(X;D))\xrightarrow{incl.} K_*(\cfrak(X^\to;D))\to \tilde K_*(\cfrak(X^\to;D)).
\end{equation}
is an isomorphism.

We choose $Z\cup_{pt\sim 0}\N\subset X^\to$ as preferred coarsely equivalent discrete subspace to calculate the coarse $K$-theory of $X^\to$. 
Consider the inclusion of simplicial complexes
\begin{equation}\label{addedrayKX}
\mathcal{P}_Z\cup_{pt\sim 0}\mathcal{P}_{\N}\subset \mathcal{P}_{Z\cup_{pt\sim 0}\N}.
\end{equation}
The left hand side inherits the structure of a $\sigma$-coarse space by choosing the obvious filtration by the subcomplexes
$$P_{Z,n}\cup_{pt\sim 0}P_{\N,n}\subset P_{Z\cup_{pt\sim 0}\N,\,n}$$
 and giving each of them the corresponding subspace coarse structure. These inclusions are coarse equivalences for all $n$, because both sides differ only by a finite number of simplices. Furthermore, all of these subcomplexes are coarsely equivalent to $X^\to$. We obtain canonical isomorphisms of $C^*$-algebras
$$\cfrak(\mathcal{P}_Z\cup_{pt\sim 0}\mathcal{P}_{\N};D)\cong \cfrak(\mathcal{P}_{Z\cup_{pt\sim 0}\N};D)\cong \cfrak(X^\to;D)\cong \cfrak(X;D)\oplus \cfrak(\R^+;D).$$

We claim that the inclusion \eqref{addedrayKX}
is also a homotopy equivalence. We shall construct a homotopy inverse as sketched in Figure \ref{fig:Ripscomplexhomotopy}. 
\begin{figure}[htbp]
\begin{center}
\setlength{\unitlength}{0.075\textwidth}
\begin{picture}(10.4,7.4)(-6.2,-2.2)
\multiput(-6,0)(1,0){11}{\circle*{0.1}}
%\put(-0.3,-0.3){$0\sim pt$}

\multiput(1,1)(1,0){4}{\circle*{0.1}}
\multiput(2,2)(1,0){3}{\circle*{0.1}}
\multiput(3,3)(1,0){2}{\circle*{0.1}}
\multiput(2,4)(1,0){3}{\circle*{0.1}}
\multiput(3,5)(1,0){2}{\circle*{0.1}}

\multiput(1,-1)(1,0){4}{\circle*{0.1}}
\multiput(1,-2)(1,0){4}{\circle*{0.1}}

%\multiput(0.05,0.1)(0.25,0.5){10}{\line(1,2){0.15}}
%\multiput(1.8,3.6)(0.25,0.5){3}{\line(1,2){0.15}}
%\multiput(0.3,-0.6)(0.25,-0.5){4}{\line(1,-2){0.15}}
\multiput(0.95,-1.9)(-0.28,0.56){3}{\line(-1,2){0.2}}
\multiput(1.76,3.52)(0.28,0.56){3}{\line(1,2){0.2}}

\multiput(-4.15,0)(-0.5,0){5}{\line(-1,0){0.2}}

\put(3.4,0.4){$Z$}
\put(-5.8,-0.35){$\mathbb{N}$}

\put(-0.3,-0.35){$0\!=\!pt$}
\put(-4,-0.35){$n$}
\put(3,3.7){$x$}
\put(-2.1,0.7){$\kappa>\lambda$}
\put(-0.2,1.7){$\kappa<\lambda$}

\thicklines
\put(0,0){\line(-1,0){4}}
\put(0,0){\line(3,4){3}}
\qbezier(-4,0)(-0.5,2)(3,4)
\qbezier(-0.5,2)(-0.25,1)(0,0)

\thinlines
\put(-3,0.5){\vector(0,-1){0.4}}
\put(-2.5,0.75){\vector(0,-1){0.65}}
\put(-2,0.5){\vector(0,-1){0.4}}

\put(-0.6,1.8){\vector(1,-4){0.42}}
\put(-1.1,1.5){\vector(1,-4){0.35}}
\put(-1.4,0.5){\vector(1,-4){0.1}}

\put(-0.36,1.94){\vector(1,-4){0.42}}
\put(0.1,1.5){\vector(1,-3){0.3}}
\put(0.6,2.5){\vector(1,-2){0.45}}
\put(1.1,2.8){\vector(1,-1){0.48}}
\put(1.9,3.3){\vector(2,-1){0.38}}

\end{picture}
\caption{The simplex of $\mathcal{P}_{Z\cup_{pt\sim 0}\N}$ spanned by the points $n\in \mathbb{N}, x\in Z$ and $0=pt\in \mathbb{N}\cap Z$ is mapped to the union of the simplex  spanned by $n,0$ and the simplex spanned by $pt,x$.}
\label{fig:Ripscomplexhomotopy}
\end{center}
\end{figure}
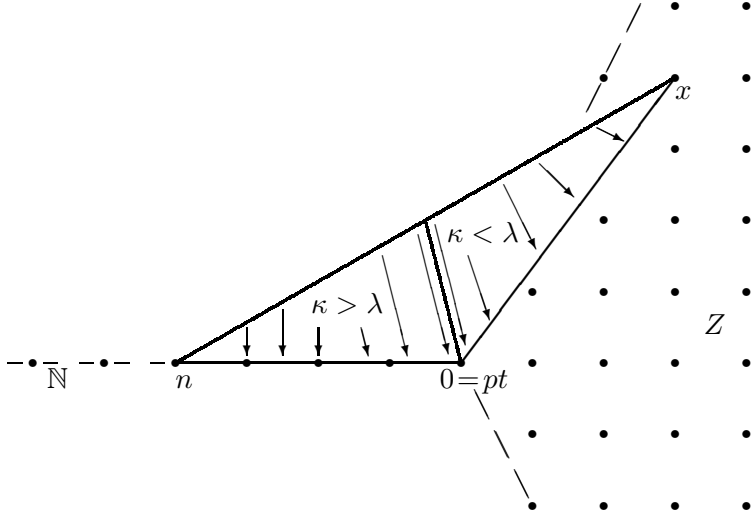

Let
$$\mu=\iota\delta_0+\sum_{n\in\N\setminus\{0\}}\kappa_n\delta_n+\sum_{x\in Z\setminus\{pt\}}\lambda_x\delta_x\quad\in\mathcal{P}_{Z\cup_{pt\sim 0}\N},$$
where $\delta_x$ denotes the Dirac measure with support $x$. The sums are finite, $\iota,\kappa_n,\lambda_x\geq 0$ for all $n\in\N,x\in Z$ and $\iota+\sum\kappa_n+\sum\lambda_x=1$. 
We map $\mu$ to
$$\sum_{n\in\N\setminus\{0\}}\left(\frac{2\kappa}{1-\iota}-1\right)\kappa_n\delta_n+\left(1-\frac{2\kappa^2}{1-\iota}+\kappa\right)\delta_0\quad\in\mathcal{P}_{\N}$$
if $\kappa:=\sum\kappa_n\geq \sum\lambda_x$ (note that $0\leq\kappa_n\leq\kappa\leq1-\iota$, so the definition can be extended continuously to the point $\mu=\delta_0$) and to
$$\sum_{x\in Z\setminus\{pt\}}\left(\frac{2\lambda}{1-\iota}-1\right)\lambda_x\delta_x+\left(1-\frac{2\lambda^2}{1-\iota}+\lambda\right)\delta_0\quad\in\mathcal{P}_Z$$
if $\lambda:=\sum\lambda_x\geq \sum\kappa_n$.
It is easy to see that this construction is  continuous, fixes the subcomplex $\mathcal{P}_Z\cup_{pt\sim 0}\mathcal{P}_{\N}$ and respects the filtration of these locally compact spaces.
Thus, it defines a retraction 
$$\mathcal{P}_{Z\cup_{pt\sim 0}\N}\to\mathcal{P}_Z\cup_{pt\sim 0}\mathcal{P}_{\N}$$
 of $\sigma$-locally compact spaces. Similarly we see that the composition 
$$\mathcal{P}_{Z\cup_{pt\sim 0}\N}\to\mathcal{P}_Z\cup_{pt\sim 0}\mathcal{P}_{\N}\subset\mathcal{P}_{Z\cup_{pt\sim 0}\N}$$
is homotopic to the identity under linear homotopy.

Consider the short exact sequence of $\sigma$-$C^*$-algebras
$$0\to C_0(\mathcal{P}_Z\setminus\{pt\};D)\to C_0(\mathcal{P}_Z\cup_{pt\sim 0}\mathcal{P}_{\N};D)\to C_0(\mathcal{P}_{\N};D)\to0.$$
By \cite[Theorem 27]{EmeMey} we have $K_*(C_0(\mathcal{P}_{\N};D))\cong K_*(C_0(\R^+;D))=0$, so the long exact sequence in $K$-theory proves 
$$K_*(C_0(\mathcal{P}_Z\setminus\{pt\};D))\cong K_*(C_0(\mathcal{P}_Z\cup_{pt\sim 0}\mathcal{P}_{\N};D)).$$
We obtain  the second canonical isomorphism by exploiting homotopy invariance of $K$-theory:
\begin{align}
K_*(C_0(\mathcal{P}_Z\setminus\{pt\};D))&\cong K_*(C_0(\mathcal{P}_Z\cup_{pt\sim 0}\mathcal{P}_{\N};D))\nonumber
\\&\cong K_*(C_0(\mathcal{P}_{Z\cup_{pt\sim 0}\N};D))\nonumber
\\&= KX^{-*}(X^\to;D)\label{unreductiontopologicalside}
\end{align}

Thus, it remains to show that the canonical isomorphisms \eqref{unreductionanalyticside} and \eqref{unreductiontopologicalside}, coarse co-assembly and the connecting homomorphism mentioned in the statement of this lemma make the diagram
$$\xymatrix{
K_{1-*}(\cfrak(X;D))\ar[r]\ar[d]_{\cong}&K_{-*}(C_0(\mathcal{P}_Z\setminus\{pt\};D))\ar[d]^{\cong}
\\\tilde K_{1-*}(\cfrak(X^\to;D))\ar[r]&KX^*(X^\to;D)
}$$
commute. This follows easily from naturality of the connecting homomorphism in $K$-theory  applied to the diagram with exact rows
$$\xymatrix@C=1pc{
0\ar[r]&C_0(\mathcal{P}_Z\setminus\{pt\};D\otimes\K)\ar[r]\ar[d]&\cbar_0(\mathcal{P}_Z;D)\ar[r]\ar[d]&\cfrak(\mathcal{P}_Z;D)\ar[r]\ar[d]&0
\\0\ar[r]&C_0(\mathcal{P}_Z\underset{pt\sim 0}{\cup}\mathcal{P}_{\N};D\otimes\K)\ar[r]&\cbar(\mathcal{P}_Z\underset{pt\sim 0}{\cup}\mathcal{P}_{\N};D)\ar[r]&\cfrak(\mathcal{P}_Z\underset{pt\sim 0}{\cup}\mathcal{P}_{\N};D)\ar[r]&0
\\0\ar[r]&C_0(\mathcal{P}_{Z\cup_{pt\sim 0}\N};D\otimes\K)\ar[r]\ar[u]^{\simeq}&\cbar(\mathcal{P}_{Z\cup_{pt\sim 0}\N};D)\ar[r]\ar[u]&\cfrak(\mathcal{P}_{Z\cup_{pt\sim 0}\N};D)\ar[r]\ar[u]_{\cong}&0
}$$
and the obvious commutative diagram
$$\xymatrix{
K_{1-*}(\cfrak(X;D))\ar[r]^{\cong}\ar[dd]&K_{1-*}(\cfrak(\mathcal{P}_Z;D))\ar[d]
\\&K_{1-*}(\cfrak(\mathcal{P}_Z\cup_{pt\sim 0}\mathcal{P}_{\N};D))
\\K_{1-*}(\cfrak(X^\to;D))\ar[r]^{\cong}&K_{1-*}(\cfrak(\mathcal{P}_{Z\cup_{pt\sim 0}\N};D))\ar[u]_{\cong}.
}$$
\end{proof}

This Lemma justifies the following definitions of unreduced co-assembly. It will be convenient to have also unstable and uncoarsened versions at hand. For any $\sigma$-coarse space $\mathcal{X}=\bigcup_{n\in\N}X_n$  and any $C^*$-algebra $D$   denote
$$K^*(\mathcal{X};D):=K_{-*}(C_0(\mathcal{X})\otimes D)\quad \text{and}\quad K^*(\mathcal{X}):=K_{-*}(C_0(\mathcal{X})).$$
\begin{mdef}\label{coassemblydefUnredUncUnst}
Let $\mathcal{X}=\bigcup_{n\in\N}X_n$ be a $\sigma$-coarse space, $pt\in X_0$ and $D$ a $C^*$-algebra. 
The \emph{unreduced, unstable and uncoarsened coarse co-assembly map with coefficients $D$} is the connecting homomorphism
$$K_{1-*}(\kfrak(\mathcal{X};D))\to K^*(\mathcal{X}\setminus\{pt\};D)$$
associated to the short exact sequence
$$0\to C_0(\mathcal{X}\setminus\{pt\})\otimes D\to\kbar_0(\mathcal{X};D)\to\kfrak(\mathcal{X};D)\to 0.$$
\end{mdef}
\begin{mdef}
Let $X$ be a coarse space,  $pt\in X$ and  $Z\subset X$ be a countably generated, discrete coarse subspace that is coarsely equivalent to $X$ with $pt\in Z$.  We define
$${KX}^*(X\setminus\{pt\};D):=K^*(\mathcal{P}_Z\setminus\{pt\};D)=K_{-*}(C_0(\mathcal{P}_Z\setminus\{pt\})\otimes D).$$
\end{mdef}
It is known that $KX^*(X;D)=K_*(C_0(\mathcal{P}_Z)\otimes D)$ is independent of the choice of $Z\subset X$ \cite[Corollary 21]{EmeMey}. The exact sequence in $K$-theory associated to the short exact sequence
$$0\to C_0(\mathcal{P}_Z\setminus\{pt\})\otimes D\to C_0(\mathcal{P}_Z)\otimes D\to D\to 0$$
and the five lemma prove, that ${KX}^*(X\setminus\{pt\};D)$ is also independent of the choice of $Z$.

\begin{mdef}\label{coassemblydefUnredCoar}
Let $X$ be a coarse space,  $pt\in X$ and $D$ a $C^*$-algebra. The \emph{unreduced, unstable coarse co-assembly map with coefficients $D$} is  obtained by appying the uncoarsened version to the Rips complex:
$$K_{1-*}(\kfrak(X;D))\cong K_{1-*}(\kfrak(\mathcal{P}_Z;D))\to {KX}^*(X\setminus\{pt\};D)$$
Replacing $D$ by $D\otimes \K$ or even by $\K$, we obtain the \emph{unreduced, stable coarse co-assembly map (with coefficients $D$)}
\begin{align*}
K_{1-*}(\cfrak(X;D))&\to {KX}^*(X\setminus\{pt\};D)\,,
\\K_{1-*}(\cfrak(X))&\to {KX}^*(X\setminus\{pt\})\,.
\end{align*}
\end{mdef}

%%%%%%%%%%%%%%%%%%%%%%%%%%%%%%%%%%%%%%%%%%%%%%%%%%%%%%%%

\section{{$K$-theory product for Higson coronas}}\label{sec:coronaringstructure}
Let $\mathcal{X}=\bigcup_{n\in \N}X_n$ be a $\sigma$-coarse space and $D,E$ coefficient $\sigma$-$C^*$-algebras. 
By multiplication of functions\footnote{The correct way of constructing this homomorphism is to use the universal property of the maximal tensor product (Proposition \ref{tensormaxunivprop}) and Corollary \ref{tensorquotients}. This method will be used implicitly a few more times, when we just prescribe the values of the map on elementary tensors. } we obtain a  homomorphism
$$\nabla:\,\kfrak(\mathcal{X};D)\otimes \kfrak(\mathcal{X};E)\to\kfrak(\mathcal{X}; D\otimes  E)\,.$$

\begin{mdef}\label{coronaproduct}
The product
$$K_i(\kfrak(\mathcal{X};D))\otimes K_j(\kfrak(\mathcal{X};E))\to K_{i+j}(\kfrak(\mathcal{X};D\otimes E))$$
is the composition of the exterior product in $K$-theory with  $\nabla_*$. Replacing $D,E$ by $D\otimes \K,E\otimes \K$ or simply both by $\K$ we obtain the products
\begin{align*}
K_i(\cfrak(\mathcal{X};D))\otimes K_j(\cfrak(\mathcal{X};E))&\to K_{i+j}(\cfrak(\mathcal{X};D\otimes E))
\\K_i(\cfrak(\mathcal{X}))\otimes K_j(\cfrak(\mathcal{X}))&\to K_{i+j}(\cfrak(\mathcal{X}))
\end{align*}
for the stable Higson corona. We denote all of them simply by ``$\,\cdot\,$''.
\end{mdef}
These product are obviously associative, graded commutative and independent of the choice of the identification $\K\otimes\K\cong \K$ which is hidden in the definition.

We are now in a position to see a first instance of the multiplicativity of the co-assembly map.
Let $Y$ be a compact metrizable space and embed it into the unit sphere of some real Hilbert space $H$. The open cone $\mathcal{O}Y$ of $Y$ is defined to be the union of all rays in $H$ starting at the origin and passing through points of $Y$. It is equipped with the subspace metric. The induced coarse structure of $\mathcal{O}Y$ is independent of the chosen embedding.
Topologically, $\mathcal{O}Y\approx Y\times[0,\infty)/Y\times\{0\}$. Furthermore, let 
$\mathcal{C}Y:= Y\times[0,\infty]/Y\times\{0\}\supset \mathcal{O}Y$ be the closed cone and $pt$ be the apex of these two cones. 

If $Y$ is a ``nice'' compact space, e.\,g.\ a compact manifold or a finite simplicial complex, then $X:=\mathcal{O}Y$ is a scalable and uniformly contractible metric space of bounded geometry. Therefore, the coarse co-assembly map of $X$ with coefficients in any $C^*$-algebra $D$ is an isomorphism
$$K_*(\cfrak(X,D))\xrightarrow[\cong]{\mu^*} K_{*-1}(C_0(X\setminus\{pt\})\otimes D)$$
by \cite[Corollary 58]{EmeMey}.

Note that continuous functions on $\bar X:=\mathcal{C}Y$ restrict to bounded continuous functions of vanishing variation on $X$. Thus, given a $C^*$-algebra $D$, there is a commutative diagram with exact rows
$$\xymatrix@C=1pc{
0\ar[r]&C_0(X\setminus\{pt\})\otimes D\ar[r]\ar[d]&C_0(\bar X\setminus\{pt\})\otimes D\ar[r]\ar[d]&C(Y)\otimes D\ar[r]\ar[d]_{p^*}&0
\\0\ar[r]&C_0(X\setminus\{pt\})\otimes D\otimes\K\ar[r]&\cbar_0(X,D)\ar[r]&\cfrak(X,D)\ar[r]&0
}$$
with vertical maps defined by choosing a rank one projection in $\K$. The induced right vertical arrow $p^*$ is in fact given by pulling back functions along the projection $X\setminus\{pt\}\to Y$.
We obtain a commutative diagram
$$\xymatrix{
K_*(C(Y)\otimes D)\ar[r]^{\partial}_\cong\ar[d]_{p^*}&K_{*-1}(C_0(X\setminus\{pt\}))\ar@{=}[d]
\\K_*(\cfrak(X,D))\ar[r]^{\mu^*}_\cong&K_{*-1}(C_0(X\setminus\{pt\})).
}$$
After identifying $K_{*-1}(C_0(X\setminus\{pt\}))$ with $K_*(C(Y)\otimes D)$ via the isomorphism $\partial$ we see that $p^*$ is inverse to the coarse co-assembly map $\tilde\mu^*:=\partial^{-1}\circ\mu^*$.

It is now a triviality to check that $p^*$ and therefore also $\tilde\mu^*$ are multiplicative in the following sense:
\begin{prop}\label{prop:openconecoassembly}
Let $X:=\mathcal{O}Y$ be the open cone over a  compact metrizable space $Y$ and assume that the topology  of $Y$ is ``nice'' enough such that $X$ is  scalable and uniformly contractible of bounded geometry. If $D,E$ are any $C^*$-algebras, then the diagram
$$\xymatrix{
K_i(\cfrak(X,D))\otimes K_j(\cfrak(X,E))\ar[r]\ar[d]^{\tilde\mu^*\otimes\tilde\mu^*}&K_{i+j}(\cfrak(X,D\otimes E))\ar[d]^{\tilde\mu^*}
\\K_i(C(Y)\otimes D)\otimes K_j(C(Y)\otimes E)\ar[r]&K_{i+j}(C(Y)\otimes D\otimes E)
}$$
commutes. In particular, the co-assembly map 
$K_*(\cfrak(X))\to K^{-*}(Y)$
is a ring isomorphism.
\end{prop}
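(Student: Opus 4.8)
The plan is to reduce the multiplicativity of $\tilde\mu^*$ to the multiplicativity of the concrete map $p^*$, exploiting the commutative diagram already established in the excerpt that identifies $\tilde\mu^* = \partial^{-1}\circ\mu^*$ with $p^*$. Since we have shown $p^*$ is a two-sided inverse to $\tilde\mu^*$, and since a multiplicative isomorphism has a multiplicative inverse, it suffices to verify that $p^*$ respects the products. Concretely, I would set up the diagram in the proposition with $p^*$ in place of $\tilde\mu^*$ throughout, and check that it commutes; then transport multiplicativity back to $\tilde\mu^*$ by composing with the inverse isomorphisms.

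First I would make the product on the corona side, $K_i(\cfrak(X,D))\otimes K_j(\cfrak(X,E))\to K_{i+j}(\cfrak(X,D\otimes E))$, explicit: by Definition \ref{coronaproduct} it is the exterior $K$-theory product followed by $\nabla_*$, where $\nabla$ is induced by pointwise multiplication $\cfrak(X;D)\otimes\cfrak(X;E)\to\cfrak(X;D\otimes E)$. On the topological side, the product $K_i(C(Y)\otimes D)\otimes K_j(C(Y)\otimes E)\to K_{i+j}(C(Y)\otimes D\otimes E)$ is the exterior product composed with the $*$-homomorphism induced by the diagonal multiplication $C(Y)\otimes C(Y)\to C(Y)$, $f\otimes g\mapsto fg$. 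The essential point is that $p^*:C(Y)\otimes D\to\cfrak(X,D)$ is itself induced by a $*$-homomorphism, namely pullback along the projection $\pi:X\setminus\{pt\}\to Y$ composed with the inclusion as constant-fiber functions. Thus the whole question becomes a compatibility statement at the level of $*$-homomorphisms before passing to $K$-theory.

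The key step is therefore to check that the square of $*$-homomorphisms
\begin{equation*}
\xymatrix{
(C(Y)\otimes D)\otimes(C(Y)\otimes E)\ar[r]\ar[d]_{p^*\otimes p^*}&C(Y)\otimes D\otimes E\ar[d]^{p^*}
\\\cfrak(X,D)\otimes\cfrak(X,E)\ar[r]^-{\nabla}&\cfrak(X,D\otimes E)
}
\end{equation*}
commutes. This is verified on elementary tensors: the top-right path sends $(f\otimes d)\otimes(g\otimes e)$ to $p^*(fg\otimes d\otimes e)$, which is the class of $(x\mapsto f(\pi(x))g(\pi(x))\otimes d\otimes e)$ times the chosen rank-one projection, while the left-bottom path multiplies the two pulled-back functions $x\mapsto f(\pi(x))\otimes d$ and $x\mapsto g(\pi(x))\otimes e$ pointwise, yielding the same element (the two rank-one projections multiply to one under the fixed $\K\otimes\K\cong\K$). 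Since $p^*$ arises from an honest $*$-homomorphism and both products are built from exterior products followed by induced maps, naturality of the exterior product (Axiom \ref{Ktheoryaxiomproduct}) then gives commutativity of the induced $K$-theory square, i.e.\ $p^*$ is multiplicative.

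Finally I would transfer this back to $\tilde\mu^*$. Because $p^*$ and $\tilde\mu^*$ are mutually inverse isomorphisms and $p^*$ is multiplicative, $\tilde\mu^*$ is automatically multiplicative: applying $p^*\otimes p^*$ and then $p^*$ to the proposition's diagram turns it into the multiplicativity square for $p^*$, which we have just shown commutes. The special case $D=E=\C$ with the identification $KX^*$ on $\mathcal{O}Y\setminus\{pt\}\simeq Y$ giving $K_*(\cfrak(X))\cong K^{-*}(Y)$ then exhibits the co-assembly map as a ring isomorphism. I expect the main obstacle to be purely bookkeeping: ensuring the identification of $p^*$ as a genuine $*$-homomorphism (including the handling of the rank-one projection and the fixed isomorphism $\K\otimes\K\cong\K$) is carried out cleanly so that the elementary-tensor computation is unambiguous, and that the degree-shifting boundary isomorphism $\partial$ does not interfere — but here $\partial$ sits in the identification of the \emph{target} group rather than in the product itself, so no extra sign subtlety arises.
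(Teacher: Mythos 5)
Your proposal is correct and follows essentially the same route as the paper, which reduces the statement to the multiplicativity of $p^*$ (declared there to be "a triviality to check") and then transfers it to $\tilde\mu^*$ via the inverse isomorphism; your elementary-tensor verification of the $*$-homomorphism square, with the rank-one projections reconciled up to homotopy under the fixed $\K\otimes\K\cong\K$, is precisely the omitted triviality.
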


For more general $\sigma$-coarse spaces $\mathcal{X}$ we have to construct a secondary product directly on the groups $K_*(C_0(\mathcal{X}\setminus\{pt\})\otimes D)$. This is done in the next section. The key property of cones which we will have to continue to assume is contractibility to the point $pt$.

%%%%%%%%%%%%%%%%%%%%%%%%%%%%%%%%%%%%%%%%%%%%%%%%%%%%%%%%

\section{{Secondary product for $\sigma$-contractible spaces}}\label{sec:secondaryproduct}
Let $\mathcal{X}=\bigcup_{n\in\N}X_n$ 
 be a $\sigma$-locally compact space and $D$ a $\sigma$-$C^*$-algebra. Recall the definitions
$$K^*(\mathcal{X};D):=K_{-*}(C_0(\mathcal{X})\otimes D)\quad \text{and}\quad K^*(\mathcal{X}):=K_{-*}(C_0(\mathcal{X})).$$
These groups are contravariantly functorial under proper continuous maps between $\sigma$-locally compact spaces and covariantly functorial under $*$-ho\-mo\-mor\-phisms between coefficient $\sigma$-$C^*$-algebras.

The exterior product in $K$-theory of $\sigma$-$C^*$-algebras specializes to an exterior product 
$$K^i(\mathcal{X};D)\otimes K^j(\mathcal{Y};E)\to K^{i+j}(\mathcal{X}\times\mathcal{Y};D\otimes E).$$

In this section, we will construct the secondary product
$$K^i(\mathcal{X}\setminus\{pt\};D)\otimes K^j(\mathcal{X}\setminus\{pt\};E)\to K^{i+j-1}(\mathcal{X}\setminus\{pt\};D\otimes E)$$
under the assumption that $\mathcal{X}$ is equipped with a $\sigma$-contraction onto the point $pt$ as defined below,
and prove the most basic properties like associativity.

\begin{mdef}
Let $\mathcal{X}=\bigcup_{n\in\N}X_n$ be a $\sigma$-locally compact space. We call a map
$$H:\mathcal{X}\times [0,1]\to\mathcal{X}$$
a \emph{$\sigma$-contraction} to the point $pt$ iff
\begin{itemize}\itemsep0pt\parskip0pt 
\item $H|_{\mathcal{X}\times\{0\}}=\id_{\mathcal{X}}$ and $H(\mathcal{X}\times\{1\}\cup\{pt\}\times [0,1])=\{pt\}$,
\item for each $n$ there is $m\geq n$ such that $H(X_n\times [0,1])\subset X_m$ and
the restriction $$H_{X_n\times [0,1]}:\,X_n\times [0,1]\to X_m$$
is continuous. 
\end{itemize}
\end{mdef}
For a locally compact space $X$ (i.\,e.\ $X_n=X\,\forall n$), a $\sigma$-contraction is nothing but a contraction which does not move the point $pt$.

Given such a $\sigma$-contraction on $\mathcal{X}$, we can define the map
\begin{align*}
\Gamma:\,[0,1]\times \mathcal{X}&\to \mathcal{X}\times \mathcal{X}
\\(t,x)&\mapsto\begin{cases}(H(x,1-2t),x)&t\leq1/2\\(x,H(x,2t-1))&t\geq1/2\end{cases}
\end{align*}

Note that $\Gamma(\,[0,1]\times\{pt\}\cup\{0,1\}\times \mathcal{X}\,)\subset\mathcal{X}\times\{pt\}\cup\{pt\}\times\mathcal{X}$ and one entry of the pair $\Gamma(x,t)$ is always $x$.
These conditions ensure that it restricts to a proper continuous map (a morphism, cf.\ Definition \ref{sigmaspacesmonoidalcategory})
$$\Gamma:\,(0,1)\times (\mathcal{X}\setminus\{pt\})\to (\mathcal{X}\setminus\{pt\})\times (\mathcal{X}\setminus\{pt\}).$$
(To be precise: the map defining this morphism is defined on the open subset $\mathcal{U}=\Gamma^{-1}((\mathcal{X}\setminus\{pt\})\times (\mathcal{X}\setminus\{pt\}))$.)

\begin{mdef}\label{secondaryproductdefinition}
Let $\mathcal{X}=\bigcup_{n\in\N}X_n$ be a $\sigma$-locally compact space together with a $\sigma$-contraction $H$ and $D,E$ coefficient $C^*$-algebras. The \emph{secondary product} is $(-1)^i$ times the composition
\begin{align*}
K^i(\mathcal{X}\setminus\{pt\};D)&\otimes K^j(\mathcal{X}\setminus\{pt\};E)\to
\\&\xrightarrow[\hspace{2.5ex}]{} K^{i+j}(\,(\mathcal{X}\setminus\{pt\})\times(\mathcal{X}\setminus\{pt\});\,D\otimes E)
\\&\xrightarrow[\hspace{2.5ex}]{\Gamma^*} K^{i+j}(\,(0,1)\times (\mathcal{X}\setminus\{pt\});\,D\otimes E)
\\&\xrightarrow[\hspace{2.5ex}]{\cong} K^{i+j-1}(\mathcal{X}\setminus\{pt\};D\otimes E),
\end{align*}
where the last isomorphism is the inverse of the connecting homomorphism associated to the short exact sequence
\begin{align*}
0\to C_0(0,1)\to C_0[0,1)\to\C\to 0
\end{align*}
tensored with $C_0(\mathcal{X}\setminus\{pt\})\otimes D\otimes E$.
\end{mdef}
We shall use the infix notation $x\otimes y\mapsto x*y$ for this secondary multiplication.
The remaining part of this section is devoted to proving basic properties of the secondary product.
\begin{prop}
The secondary product is independent of the choice of the $\sigma$-contraction to the given point $pt$ and graded commutative.
\end{prop}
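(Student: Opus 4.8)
The plan is to prove the two assertions separately, establishing graded commutativity first since it is the more structural of the two and since independence of the $\sigma$-contraction will then follow by a homotopy/symmetry argument.

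\medskip

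\textbf{Graded commutativity.} First I would unwind the definition. Let $\mathrm{fl}:(0,1)\to(0,1)$ be the flip $t\mapsto 1-t$ and let $\mathrm{sw}:\mathcal{X}\times\mathcal{X}\to\mathcal{X}\times\mathcal{X}$ be the coordinate swap. The crucial observation is the identity
$$
\Gamma\circ(\mathrm{fl}\times\id_{\mathcal{X}})=\mathrm{sw}\circ\Gamma
$$
as maps $(0,1)\times(\mathcal{X}\setminus\{pt\})\to(\mathcal{X}\setminus\{pt\})\times(\mathcal{X}\setminus\{pt\})$, which is immediate from the two-case definition of $\Gamma$: flipping $t$ interchanges the cases $t\le 1/2$ and $t\ge 1/2$ and simultaneously swaps the two entries of the output pair. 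Pulling back in $K$-theory, $\Gamma^*$ intertwines $(\mathrm{sw})^*$ with $(\mathrm{fl}\times\id)^*$. Now $(\mathrm{sw})^*$ acts on the external product $x\times y$ by sending it to $(-1)^{ij}\,y\times x$ (graded commutativity of the exterior product, Axiom \ref{Ktheoryaxiomproduct}, together with the symmetric identification $D\otimes E\cong E\otimes D$), while $(\mathrm{fl}\times\id)^*$ contributes a factor $-1$ after applying the boundary-map isomorphism, by Axiom \ref{Ktheoryaxiomreflection} (the flip on the $C_0(0,1)$-factor is multiplication by $-1$ on $K$-theory). Assembling these, and carefully tracking the definitional sign $(-1)^i$ in front of the secondary product against the corresponding $(-1)^j$ for the reversed product, I expect the net relation $x*y=(-1)^{i+j-1}\,y*x$, which is exactly graded commutativity for a product landing in degree $i+j-1$.

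\medskip

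\textbf{Independence of the $\sigma$-contraction.} Given two $\sigma$-contractions $H_0,H_1$ to $pt$, I would build a homotopy between them and show the induced $\Gamma$-maps are properly homotopic, so that $\Gamma^*$ agrees in $K$-theory by the homotopy invariance in Axiom \ref{Ktheoryaxiomfunctor}. The natural candidate is the straight-line (convex-combination) homotopy $H_s$ when $\mathcal{X}$ carries a linear structure as in the Rips-complex setting, or more robustly a concatenation homotopy through the space of $\sigma$-contractions. The point to verify is that the associated family $\Gamma_s:(0,1)\times(\mathcal{X}\setminus\{pt\})\to(\mathcal{X}\setminus\{pt\})\times(\mathcal{X}\setminus\{pt\})$ is a \emph{proper} continuous map of $\sigma$-locally compact spaces in the sense of Definition \ref{sigmaspacesmonoidalcategory}; this is where the structural constraints on $\Gamma$ matter. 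The two conditions noted after the definition — namely $\Gamma(\,[0,1]\times\{pt\}\cup\{0,1\}\times\mathcal{X}\,)\subset\mathcal{X}\times\{pt\}\cup\{pt\}\times\mathcal{X}$ and that one entry of $\Gamma(t,x)$ is always $x$ — are precisely what guarantees that the preimage of $(\mathcal{X}\setminus\{pt\})\times(\mathcal{X}\setminus\{pt\})$ is open and that the restriction is proper; I would check these persist uniformly in the homotopy parameter $s$.

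\medskip

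\textbf{Main obstacle.} The genuine difficulty is not the sign bookkeeping but properness in the $\sigma$-category: one must confirm that as $x$ escapes to infinity within each $X_n$ (or as $t\to 0,1$), the map $\Gamma_s$ escapes to infinity in the product, so that pullback along $\Gamma_s$ is defined on $C_0$ and continuity of $\Gamma^*_s$ in $s$ holds on each filtration level $X_n$. Because $H_s$ only satisfies the filtration-shift condition ($H(X_n\times[0,1])\subset X_m$ for some $m\ge n$), I must ensure a single $m$ works for the whole homotopy, which follows from compactness of $[0,1]\times[0,1]$ together with the defining property that the contraction respects the filtration. Once properness is secured, homotopy invariance of $K$-theory (Axiom \ref{Ktheoryaxiomfunctor}) closes the argument, and the already-established commutativity shows the construction does not secretly depend on an asymmetric choice.
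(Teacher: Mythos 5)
Your graded commutativity argument is essentially the paper's: the identity $\Gamma\circ(\mathrm{fl}\times\id)=\mathrm{sw}\circ\Gamma$, the sign $(-1)^{ij}$ from swapping the exterior factors, the sign $-1$ from reversing the interval (Axiom \ref{Ktheoryaxiomreflection}), and the change of the definitional prefactor from $(-1)^i$ to $(-1)^j$ are exactly the three contributions the paper tallies. But your final assembly is off: these multiply to $(-1)^{ij+i+j+1}=(-1)^{(i+1)(j+1)}=(-1)^{(i-1)(j-1)}$, not $(-1)^{i+j-1}$; the two disagree when $i$ and $j$ are both odd. The correct sign is the graded-commutativity sign for elements of degrees $i-1$ and $j-1$ (the ``true'' degrees after the boundary isomorphism), which is the point the paper makes; ``graded commutativity for a product landing in degree $i+j-1$'' is not a well-formed criterion, since the sign depends on the degrees of the factors, not of the product.

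The independence part has a genuine gap. You propose to connect two $\sigma$-contractions $H$ and $\tilde H$ by a path of $\sigma$-contractions — by convex combination ``when $\mathcal{X}$ carries a linear structure'' or by ``a concatenation homotopy through the space of $\sigma$-contractions.'' Neither is available: the proposition is stated for an arbitrary $\sigma$-locally compact space, which has no linear structure, and the existence of a path of $\sigma$-contractions joining $H$ to $\tilde H$ is precisely the kind of statement that needs proof (mere contractibility of $\mathcal{X}$ gives homotopies of maps, but not homotopies through maps satisfying the properness and filtration constraints that make $\Gamma^*$ well defined). The paper sidesteps this entirely with an interleaving trick: it writes down the single map
$$(t,x)\mapsto\begin{cases}\bigl(H(\tilde H(x,1-2t),1-2t),\;x\bigr)&t\leq1/2\\ \bigl(x,\;H(\tilde H(x,2t-1),2t-1)\bigr)&t\geq1/2\end{cases}$$
and observes that \emph{both} $\Gamma$ and $\tilde\Gamma$ are properly homotopic to it — one homotopy scales the time parameter fed to $H$ down to $0$ (using $H(\cdot,0)=\id$), the other scales the parameter fed to $\tilde H$ (using $\tilde H(\cdot,0)=\id$). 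Properness of these homotopies follows from the two structural facts you correctly single out (one output entry is always $x$, and at $t\in\{0,1\}$ or $x=pt$ one entry is $pt$), which persist for the composite map. Without some such device, your argument does not go through; with it, your properness discussion becomes the verification one actually needs.
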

\begin{proof}
If $\tilde H$ is another $\sigma$-contraction to the same point $pt$ with associated proper continuous map $\tilde\Gamma$, then both $\Gamma$ and $\tilde\Gamma$ are homotopic to the proper continuous map
$$(t,x)\mapsto\begin{cases}(H(\tilde H(x,1-2t),1-2t),\;x)&t\leq1/2\\(x,\;H(\tilde H(x,2t-1),2t-1))&t\geq1/2\end{cases}$$
in the obvious way and thus induce the same map in $K$-theory.

Exchanging the factors in the exterior product gives an additional sign $(-1)^{ij}$, changing the orientation of the interval $(0,1)$ gives another $-1$ and instead of multiplying with $(-1)^i$ we have to multiply with $(-1)^j$. In total we obtain a change in sign by $(-1)^{(i+1)(j+1)}$. Note that this is the desired prefactor, as the correct degree of $K^i(\mathcal{X}\setminus\{pt\};D)$ is in fact $i-1$, not $i$.
\end{proof}

\begin{thm}\label{thm:secprodassoc}
The secondary product is associative.
\end{thm}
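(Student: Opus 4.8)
The plan is to rewrite both triple products as a single double suspension of the pullback of the triple exterior product $x\times y\times z\in K^{i+j+k}\big((\mathcal X\setminus\{pt\})^{3};D\otimes E\otimes F\big)$ along two explicit proper maps $\Phi_{L},\Phi_{R}\colon (0,1)^{2}\times(\mathcal X\setminus\{pt\})\to(\mathcal X\setminus\{pt\})^{3}$. First I would unwind $(x*y)*z$: the inner factor $x*y$ is the $s$-suspension of $\Gamma^{*}(x\times y)$, so using naturality of the exterior product and its compatibility with connecting maps (Axioms \ref{Ktheoryaxiomproduct} and \ref{Ktheoryaxiomcompatibility}), together with naturality of the suspension isomorphism under pullbacks that fix the suspension coordinate, the inner suspension commutes past the outer $\Gamma^{*}$ and the outer exterior multiplication. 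The whole composite then becomes: pull $x\times y\times z$ back along a map $\Phi_{L}(s,t,w)$, then suspend in both $s$ and $t$. Writing $\Gamma$ out in the inner variable $s$ and the outer variable $t$ exhibits $\Phi_{L}$ as a four-region formula in $H$, with a nested contraction $H(H(w,\cdot),\cdot)$ in the first coordinate; the symmetric computation gives $\Phi_{R}$ for $x*(y*z)$, with the nesting in the second coordinate.

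The core of the argument is to show that $\Phi_{L}$ and $\Phi_{R}$ induce the same class after double suspension. Their boundary behaviours on $\partial\big((0,1)^{2}\big)$ differ, so rather than homotoping them on the fixed square I would build from the contraction $H$ a single proper map $\Phi\colon Q\times(\mathcal X\setminus\{pt\})\to(\mathcal X\setminus\{pt\})^{3}$ out of a two-dimensional parameter cell $Q$ (the relevant associahedral $2$-cell, modelled on a square or pentagon) in which the two bracketings occupy complementary arcs of $\partial Q$. I would arrange $\Phi$ so that each of its three coordinates is an iterate of $H$ applied to $w$, so that at least one coordinate is driven to $pt$ everywhere on the outer boundary of $Q$ — this is exactly what guarantees properness and places that boundary into the fat diagonal at infinity — and so that the restriction of $\Phi$ to the two complementary arcs reproduces the nested formulas of $\Phi_{L}$ and $\Phi_{R}$. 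Reading $\Phi^{*}(x\times y\times z)$ off through the two decompositions of $\partial Q$, each an iterated suspension/connecting-map computation, then yields $(x*y)*z$ on one side and $x*(y*z)$ on the other.

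Equality of the two bracketings follows from homotopy invariance (Axiom \ref{Ktheoryaxiomfunctor}) together with the exactness and suspension machinery of Section \ref{sec:KTheory}, in the same spirit as the independence-of-contraction argument preceding this theorem, which connected two choices of $\Gamma$ through a nested contraction. It then remains only to check the signs: the explicit prefactors $(-1)^{i}$ built into Definition \ref{secondaryproductdefinition} combine with the Koszul sign of Axiom \ref{Ktheoryaxiomcompatibility} (interchanging an exterior factor with a connecting map) and the reflection sign of Axiom \ref{Ktheoryaxiomreflection} (reorienting an interval and interchanging the two suspension directions) to give total sign $+1$ on both sides. I expect the construction of the interpolating cell $\Phi$ to be the main obstacle — keeping all three coordinates expressed through $H$, maintaining properness on all of $\partial Q$, and matching the nested boundary formulas of $\Phi_{L}$ and $\Phi_{R}$ simultaneously — with the sign bookkeeping the secondary difficulty.
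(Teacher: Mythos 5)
Your overall framework is the same as the paper's: both bracketings are unwound, via naturality of the exterior product, compatibility with connecting maps (Axiom \ref{Ktheoryaxiomcompatibility}) and the reflection sign (Axiom \ref{Ktheoryaxiomreflection}), into a double suspension of the pullback of $x\times y\times z$ along two explicit four-region maps $(0,1)^2\times(\mathcal{X}\setminus\{pt\})\to(\mathcal{X}\setminus\{pt\})^3$ built from nested applications of $H$, and the sign bookkeeping you describe is exactly what the paper's diagram records. But the core step is missing. You assert that the two maps cannot be compared by a homotopy on the fixed square because their boundary behaviours on $\partial\bigl((0,1)^2\bigr)$ differ, and you therefore propose an interpolating associahedral $2$-cell $Q$ with the two bracketings on complementary arcs of $\partial Q$ --- and you then leave the construction of that cell, which you yourself identify as the main obstacle, entirely open. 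That is the entire content of the theorem: without the interpolation (and without a precise mechanism for ``reading off'' each iterated suspension from an arc of $\partial Q$, which would itself require a relative or Mayer--Vietoris argument you have not formulated), nothing has been proved.

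The premise motivating the detour is also wrong. In the category of morphisms of $\sigma$-locally compact spaces used here, a homotopy between the two maps only needs to be a \emph{proper} continuous map on $[0,1]\times(0,1)^2\times(\mathcal{X}\setminus\{pt\})$; properness requires that near $\partial\bigl((0,1)^2\bigr)$ \emph{some} coordinate of the image be driven to $pt$, not the \emph{same} coordinate for both maps or throughout the homotopy. The paper exploits exactly this: it writes out both composites explicitly on the four quadrants $\{s\lessgtr 1/2\}\times\{t\lessgtr 1/2\}$ and connects them by four consecutive elementary homotopies, each of which keeps one component equal to $x$ everywhere and sends one component to $pt$ whenever $s$ or $t$ hits $0$ or $1$. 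So the ``fixed square'' approach you rejected is precisely the one that works, and the explicit four-stage homotopy is the piece your argument would need to supply in place of the unconstructed cell $Q$.
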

\begin{proof}
For space reasons, we prove this for all coefficient algebras set to $\C$. The proof for the general case is obtained by simply dropping in the coefficient $C^*$-algebras $D,E,F$ at the appropriate places.

Consider the diagram in Figure \ref{fig:secprodassoc}.
\begin{sidewaysfigure}
$$\xymatrix{
{\begin{matrix}K^i(\mathcal{X}\setminus\{pt\})\otimes\\\otimes K^j(\mathcal{X}\setminus\{pt\})\\\otimes K^k(\mathcal{X}\setminus\{pt\})\end{matrix}}\ar[r]\ar[d]&
{\begin{matrix}K^{i+j}(\,(\mathcal{X}\setminus\{pt\})\times\\\qquad\times(\mathcal{X}\setminus\{pt\})\,)\\\otimes K^k(\mathcal{X}\setminus\{pt\})\end{matrix}}\ar[r]^{\Gamma^*\otimes\id}\ar[d]&
{\begin{matrix}K^{i+j}(\,(0,1)\times\\\qquad\times(\mathcal{X}\setminus\{pt\})\,)\\\otimes K^k(\mathcal{X}\setminus\{pt\})\end{matrix}}\ar[d]&
{\begin{matrix}K^{i+j-1}(\mathcal{X}\setminus\{pt\})\\\otimes K^k(\mathcal{X}\setminus\{pt\})\end{matrix}}\ar[d]\ar[l]_{\delta_1\otimes\id}^{\cong}
\\{\begin{matrix} K^i(\mathcal{X}\setminus\{pt\})\otimes\\ K^{j+k}(\,(\mathcal{X}\setminus\{pt\})\times\\\qquad\times(\mathcal{X}\setminus\{pt\})\,)\end{matrix}}\ar[r]\ar[d]^{\id\otimes\Gamma^*}&
{\begin{matrix}K^{i+j+k}(\,(\mathcal{X}\setminus\{pt\})\\\qquad\times(\mathcal{X}\setminus\{pt\})\\\qquad\times(\mathcal{X}\setminus\{pt\})\,)\end{matrix}}\ar[r]^{(\Gamma\times\id)^*}\ar[d]_{(\id\times\Gamma)^*}&
{\begin{matrix}K^{i+j+k}(\,(0,1)\\\qquad\times(\mathcal{X}\setminus\{pt\})\\\qquad\times(\mathcal{X}\setminus\{pt\})\,)\end{matrix}}\ar[d]^{(\id\times \Gamma)^*}&
{\begin{matrix}K^{i+j+k-1}((\mathcal{X}\setminus\{pt\})\\\qquad\times(\mathcal{X}\setminus\{pt\})\,)\end{matrix}}\ar[l]_{\delta_2}^{\cong}\ar[d]^{\Gamma^*}
\\{\begin{matrix} K^i(\mathcal{X}\setminus\{pt\})\otimes\\ K^{j+k}(\,(0,1)\times\\\qquad\times(\mathcal{X}\setminus\{pt\})\,)\end{matrix}}\ar[r]&
{\begin{matrix}K^{i+j+k}(\,(\mathcal{X}\setminus\{pt\})\\\qquad\times(0,1)\\\qquad\times(\mathcal{X}\setminus\{pt\})\,)\end{matrix}}\ar[r]_{\tilde\Gamma^*}&
{\begin{matrix}K^{i+j+k}(\,(0,1)^2\\\quad\times(\mathcal{X}\setminus\{pt\})\,)\end{matrix}}&
{\begin{matrix}K^{i+j+k-1}((0,1)\\\qquad\times(\mathcal{X}\setminus\{pt\})\,)\end{matrix}}\ar[l]_{\cong}^{\delta_3}
\\{\begin{matrix} K^i(\mathcal{X}\setminus\{pt\})\otimes\\ K^{j+k-1}(\mathcal{X}\setminus\{pt\})\end{matrix}}\ar[r]\ar[u]^{\cong}_{\id\otimes\delta_1}&
{\begin{matrix}K^{i+j+k-1}((\mathcal{X}\setminus\{pt\})\\\qquad\times(\mathcal{X}\setminus\{pt\})\,)\end{matrix}}\ar[r]^{\Gamma^*}\ar[u]_{\delta_5}^{\cong}&
{\begin{matrix}K^{i+j+k-1}((0,1)\\\qquad\times(\mathcal{X}\setminus\{pt\})\,)\end{matrix}}\ar[u]_{\delta_4}^{\cong}\ar@{=}[ur]&
K^{i+j+k-2}(\mathcal{X}\setminus\{pt\})\ar[l]_{\cong}^{\delta_1}\ar[u]_{\delta_1}^{\cong}
}$$
\caption{Proving Theorem \ref{thm:secprodassoc}}\label{fig:secprodassoc}
\end{sidewaysfigure}
The upper vertical and left horizontal maps are exterior multiplication. All the $\delta_i$ are connecting homomorphisms associated to inclusions of closed subsets: 
\begin{description}
\item[ $\delta_1$] is associated to the inclusion $\mathcal{X}\setminus\{pt\}\to [0,1)\times(\mathcal{X}\setminus\{pt\}),\;x\mapsto (0,x)$.\item[ $\delta_2$] is associated to the inclusion 
\begin{align*}
(\mathcal{X}\setminus\{pt\})\times(\mathcal{X}\setminus\{pt\})&\to [0,1)\times(\mathcal{X}\setminus\{pt\})\times(\mathcal{X}\setminus\{pt\}),
\\(x,y)&\mapsto (0,x,y).
\end{align*}
\item[ $\delta_3$] is associated to the inclusion 
\begin{align*}
(0,1)\times(\mathcal{X}\setminus\{pt\})&\to  [0,1)\times(0,1)\times(\mathcal{X}\setminus\{pt\})
\\(s,x)&\mapsto (0,s,x).
\end{align*}
\item[ $\delta_4$] is associated to the inclusion 
\begin{align*}
(0,1)\times(\mathcal{X}\setminus\{pt\})&\to  (0,1]\times(0,1)\times(\mathcal{X}\setminus\{pt\})
\\(s,x)&\mapsto (1,s,x).
\end{align*}
\item[ $\delta_5$] is associated to the inclusion 
\begin{align*}
(\mathcal{X}\setminus\{pt\})\times(\mathcal{X}\setminus\{pt\})&\to (\mathcal{X}\setminus\{pt\})\times[0,1)\times(\mathcal{X}\setminus\{pt\})
\\(x,y)&\mapsto (x,0,y).
\end{align*}
\end{description}
Note that all of them are isomorphisms, because $C_0[0,1)$ and $C_0(0,1]$ are contractible. 
 Finally, 
\begin{align*}
\tilde\Gamma:\,(0,1)^2\times\mathcal{X}\setminus\{pt\}&\to\mathcal{X}\setminus\{pt\}\times(0,1)\times\mathcal{X}\setminus\{pt\}
\\(s,t,x)&\mapsto\begin{cases}(\,H(x,1-2t),\,1-s,\,x\,)&t\leq1/2\\(\,x,\,1-s,\,H(x,2t-1)\,)&t\geq1/2.\end{cases}
\end{align*}
The upper left square commutes by associativity of the exterior product.
The middle squares on the upper and left side commute by naturality of the exterior product.
The the lower left square commutes up to a sign $(-1)^i$ and the upper right square commutes by Axiom \ref{Ktheoryaxiomcompatibility}.
In the lower right square, $\delta_4$ is the negative of $\delta_3$ by Axiom \ref{Ktheoryaxiomreflection}.

The middle squares on the bottom and right side commute by naturality of the connecting homomorphism under the proper continuous maps of pairs of spaces
\begin{align*}
\big([0,1)\times&(0,1)\times(\mathcal{X}\setminus\{pt\}),\,\{0\}\times(0,1)\times(\mathcal{X}\setminus\{pt\})\big)
\\&\to\big([0,1)\times(\mathcal{X}\setminus\{pt\})\times(\mathcal{X}\setminus\{pt\}),\,\{0\}\times(\mathcal{X}\setminus\{pt\})\times(\mathcal{X}\setminus\{pt\})\big)
\\(s,t,x)&\mapsto (s,\Gamma(t,x))
\end{align*}
and
\begin{align*}
\big((0,1]\times&(0,1)\times(\mathcal{X}\setminus\{pt\}),\,\{1\}\times(0,1)\times(\mathcal{X}\setminus\{pt\})\big)
\\&\to\big((\mathcal{X}\setminus\{pt\})\times[0,1)\times(\mathcal{X}\setminus\{pt\}),\,(\mathcal{X}\setminus\{pt\})\times\{0\}\times(\mathcal{X}\setminus\{pt\})\big)
\\(s,t,x)&\mapsto\begin{cases}(\,H(x,1-2t),\,1-s,\,x\,)&t\leq1/2\\(\,x,\,1-s,\,H(x,2t-1)\,)&t\geq1/2.\end{cases}
\end{align*}

It remains to prove commutativity of the middle square. The  proper continuous maps $(0,1)^2\times\mathcal{X}\setminus\{pt\}\to (\mathcal{X}\setminus\{pt\})^3$ inducing the two compositions are
\begin{align*}
(\Gamma\times\id)&\circ(\id\times\Gamma)(s,t,x)=\begin{cases}(\Gamma(s,H(x,1-2t)),\;x)&t\leq1/2\\(\Gamma(s,x),\;H(x,2t-1))&t\geq1/2\end{cases}
\\&=\begin{cases}
(\,H(H(x,1-2t),1-2s),\,H(x,1-2t),\,x\,)&s\leq1/2,t\leq1/2
\\(\,H(x,1-2s),\,x,\,H(x,2t-1)\,)&s\leq1/2,t\geq1/2
\\(\,H(x,1-2t),\,H(H(x,1-2t),2s-1),\,x\,)&s\geq1/2,t\leq1/2
\\(\,x,\,H(x,2s-1),\,H(x,2t-1)\,)&s\geq1/2,t\geq1/2
\end{cases}
\\(\id\times\Gamma)&\circ\tilde\Gamma(s,t,x)=\begin{cases}(\,H(x,1-2t),\,\Gamma(1-s,\,x)\,)&t\leq1/2\\(\,x,\,\Gamma(1-s,\,H(x,2t-1))\,)&t\geq1/2\end{cases}
\\&=\begin{cases}
(\,H(x,1-2t),\,x,\,H(x,1-2s\,)&s\leq1/2,t\leq1/2
\\(\,x,\,H(x,2t-1),\,H(H(x,2t-1),1-2s)\,)&s\leq1/2,t\geq1/2
\\(\,H(x,1-2t),\,H(x,2s-1),x\,)&s\geq1/2,t\leq1/2
\\(\,x,\,H(H(x,2t-1),2s-1),\,H(x,2t-1)\,)&s\geq1/2,t\geq1/2.
\end{cases}
\end{align*}
These two maps are homotopic, as can be seen by performing the following four consecutive homotopies:
\begin{enumerate}
\item Homotop within $t\leq1/2$ (always leaving the rest constant):
$$(r,s,t,x)\mapsto\begin{cases}
(H(H(x,1-2t),1-2s),H(x,(1-r)(1-2t)),x)&s\leq 1/2
\\(H(x,1-2t),H(H(x,(1-r)(1-2t)),2s-1),x)&s\geq 1/2
\end{cases}$$
\item Homotop within $s\leq 1/2$:
$$(r,s,t,x)\mapsto\begin{cases}
(H(H(x,1-2t),1-2s),x,H(x,r(1-2s)))&t\leq 1/2
\\(H(x,1-2s),x,H(H(x,2t-1),r(1-2s)))&t\geq 1/2
\end{cases}$$
\item Homotop again within $s\leq 1/2$:
$$(r,s,t,x)\mapsto\begin{cases}
(H(H(x,1-2t),(1-r)(1-2s)),x,H(x,1-2s))&t\leq 1/2
\\(H(x,(1-r)(1-2s)),x,H(H(x,2t-1),1-2s))&t\geq 1/2
\end{cases}$$
\item Homotop within $t\geq 1/2$:
$$(r,s,t,x)\mapsto\begin{cases}
(x,H(x,r(2t-1)),H(H(x,2t-1),1-2s))&s\leq 1/2
\\(x,H(H(x,r(2t-1)),2s-1),H(x,2t-1))&s\geq 1/2
\end{cases}$$\end{enumerate}
Checking continuity is straightforward. To see properness, note that for any $(r,s,t,x)$ there is always one component equal to $x$, and if $s$ or $t$ is set equal to $0$ or $1$, then one component is equal to $pt$.

Composing the maps on the left and bottom side yields
$$x\otimes y\otimes z\mapsto (-1)^jx\otimes (y*z)\mapsto (-1)^{i+j}x*(y*z)$$
while the composition along top and right side is 
$$x\otimes y\otimes z\mapsto (-1)^i(x*y)\otimes z\mapsto (-1)^{i+(i+j-1)}(x*y)*z\,.$$
As we have seen above, the diagram commutes up to a total sign of $(-1)^{i-1}$. This implies $x*(y*z)=(x*y)*z$.
\end{proof}

%%%%%%%%%%%%%%%%%%%%%%%%%%%%%%%%%%%%%%%%%%%%%%%%%%%%%%%%

\section{Multiplicativity of the coarse co-assembly map}\label{sec:multiplicativity}
\begin{thm}\label{maintheorem}
Let $\mathcal{X}=\bigcup_{n\in \N}X_n$ be a $\sigma$-coarse space together with a $\sigma$-contraction $H$ and let $D,E$ be coefficient $\sigma$-$C^*$-algebras. Then the products we have constructed and  coarse co-assembly fit into a commutative diagram.
\begin{equation}\xymatrix{
K_{i}(\kfrak(\mathcal{X};D))\otimes K_{j}(\kfrak(\mathcal{X};E))\ar[r]
\ar[d]&K^{1-i}(\mathcal{X}\setminus\{pt\};D)\otimes K^{1-j}(\mathcal{X}\setminus\{pt\};E)\ar[d]
\\K_{i+j}(\kfrak(\mathcal{X};D\otimes E))\ar[r]
&K^{1-i-j}(\mathcal{X}\setminus\{pt\};D\otimes E)
}\end{equation}
\end{thm}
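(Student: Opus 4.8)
The plan is to reduce the whole statement to the level of boundary maps and then to compare two ways of producing a single class in $K_{i+j-1}(C_0(\mathcal{X}\setminus\{pt\})\otimes D\otimes E)$. Writing $B_D=\kbar_0(\mathcal{X};D)$, $I_D=C_0(\mathcal{X}\setminus\{pt\})\otimes D$ and $C_D=\kfrak(\mathcal{X};D)$, the co-assembly map with coefficients $D$ is by Definition~\ref{coassemblydefUnredUncUnst} the boundary map $\partial_D$ of $0\to I_D\to B_D\to C_D\to 0$, and likewise for $E$ and $D\otimes E$. I would carry out the argument for general $\sigma$-$C^*$-coefficients $D,E$ exactly as in the proof of Theorem~\ref{thm:secprodassoc}; the stable, reduced versions stated as Theorems~\ref{mainthm} and~\ref{mainthmcoeff} then follow by substituting $D\otimes\K,E\otimes\K$ and invoking the identification of unreduced co-assembly with this boundary map established in the preceding lemma.

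First I would lift the multiplication. Pointwise multiplication of functions gives, via the universal property of the maximal tensor product (Proposition~\ref{tensormaxunivprop}), a homomorphism $m\colon B_D\otimes B_E\to B_{D\otimes E}$ inducing $\nabla$ on coronas and carrying the ideal $J:=I_D\otimes B_E+B_D\otimes I_E=\ker(B_D\otimes B_E\to C_D\otimes C_E)$ into $I_{D\otimes E}$. Thus $m$ is a morphism of extensions, so by naturality of the six-term sequence (Axiom~\ref{Ktheoryaxiomsequence}) the left-down composition equals $(m|_J)_*\circ\partial^{\mathrm{prod}}$ applied to the exterior product $a\times b$, where $\partial^{\mathrm{prod}}$ is the boundary of the product extension $0\to J\to B_D\otimes B_E\to C_D\otimes C_E\to 0$. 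Using the quotient identifications of Corollary~\ref{tensorquotients} and exactness of the tensor product (Theorem~\ref{tensorexact}), the compatibility of the exterior product with boundary maps (Axiom~\ref{Ktheoryaxiomcompatibility}), applied once in each variable, identifies the images of $\partial^{\mathrm{prod}}(a\times b)$ under the two surjections $J\to I_D\otimes C_E$ and $J\to C_D\otimes I_E$ with $\partial_D(a)\times b$ and, up to the sign $(-1)^i$, with $a\times\partial_E(b)$.

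The crux is then to prove the identity $(m|_J)_*\partial^{\mathrm{prod}}(a\times b)=(-1)^{1-i}\,\delta^{-1}\Gamma^*\bigl(\partial_D(a)\times\partial_E(b)\bigr)$, and this is exactly where $\sigma$-contractibility enters. On the double ideal $I_D\otimes I_E\cong C_0((\mathcal{X}\setminus\{pt\})^2)\otimes D\otimes E$ the map $m$ restricts to the homomorphism $\Delta^*$ induced by the (proper) diagonal $\Delta=\Gamma|_{t=1/2}$. The two halves of $\Gamma$ provide, through the contraction $H$, proper homotopies of maps of pairs that push each factor of the diagonal out to $pt$; combining these with the auxiliary extension $0\to C_0(0,1)\to C_0[0,1)\to\C\to0$ from Definition~\ref{secondaryproductdefinition} realizes the passage from the two co-assembly boundaries $\partial_D,\partial_E$ to the single boundary $\partial_{D\otimes E}\circ\nabla_*$. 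Concretely, I would assemble a large commutative diagram, in the spirit of Figure~\ref{fig:secprodassoc}, whose squares commute by naturality of exterior products and of boundary maps, by Axiom~\ref{Ktheoryaxiomcompatibility}, and, for the one genuinely geometric square, by an explicit homotopy of proper continuous maps $(0,1)\times(\mathcal{X}\setminus\{pt\})\to(\mathcal{X}\setminus\{pt\})^2$ built from $H$.

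The main obstacle is precisely this last square: producing, from the contraction $H$, the explicit homotopy of proper maps that identifies diagonal multiplication fed into co-assembly with $\Gamma^*$ followed by the inverse interval boundary, and verifying properness throughout (using that one coordinate always remains $x$ and that the interval ends land at $pt$, exactly as in the properness check of Theorem~\ref{thm:secprodassoc}). Everything else is formal. Finally I would collect the signs — the $(-1)^i$ from the second-variable compatibility axiom, the reversal of interval orientation handled by Axiom~\ref{Ktheoryaxiomreflection}, and the defining $(-1)^{1-i}$ of the secondary product — and check that they cancel, so that the square commutes on the nose.
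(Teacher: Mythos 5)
Your overall skeleton --- realize both co-assembly maps as connecting homomorphisms, lift the pointwise multiplication to a morphism of extensions, and then chase naturality, Axiom \ref{Ktheoryaxiomcompatibility} and the signs --- matches the paper's strategy in outline. But the reduction you carry out is essentially a restatement: the identity you isolate as ``the crux,'' namely $(m|_J)_*\partial^{\mathrm{prod}}(a\times b)=(-1)^{1-i}\,\delta^{-1}\Gamma^*(\partial_D(a)\times\partial_E(b))$, is (after unwinding Definition \ref{coronaproduct}, Definition \ref{secondaryproductdefinition} and naturality of the boundary map under $m$) precisely the commutativity of the square to be proved, and your proposal does not prove it. Moreover, the mechanism you offer for the one non-formal square --- a homotopy of proper continuous maps $(0,1)\times(\mathcal{X}\setminus\{pt\})\to(\mathcal{X}\setminus\{pt\})^2$ built from $H$ --- is not the right tool here: such a homotopy only controls the induced maps on the $K$-theory of the ideals, i.e.\ $K_*(I_D\otimes I_E)\to K_*(C_0(0,1)\otimes I_{D\otimes E})$, whereas what must be compared are connecting homomorphisms of \emph{different extensions}. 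For that one needs compatible lifts over the middle terms of the extensions, not homotopies on the ideals. (The only homotopy the paper uses in this proof is a reparametrization of the interval, $\gamma'\simeq\mathrm{id}$ on $C_0(0,1)\otimes I_{D\otimes E}$.)

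The missing construction, which is exactly where the $\sigma$-contraction enters, is the interpolating extension $0\to I_2\to A_2\to A_2/I_2\to 0$ with $A_2=C_b\bigl([0,1]\times\mathcal{X},\,[0,1]\times\{pt\}\cup\{0,1\}\times\mathcal{X};D\otimes E\bigr)$ and $I_2=C_0(0,1)\otimes I_{D\otimes E}$, together with the homomorphism $\alpha:I_D\otimes A_E\to A_2$ sending $f\otimes g$ to the function equal to $f(H(x,1-2t))\otimes g(x)$ for $t\le 1/2$ and to $f(x)\otimes g(H(x,2t-1))$ for $t\ge 1/2$. This interleaves the contraction into the two tensor factors over the two halves of the interval and restricts to $\Gamma^*$ on $I_D\otimes I_E$, which is what ties the analytic product to the map $\Gamma$. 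The decisive --- and, once stated, easy --- observation is that $\alpha$ agrees with the lift induced by pointwise multiplication modulo the contractible ideal $I_3=C_0(0,1]\otimes I_{D\otimes E}$, because for $f\in I_D$ the difference $f(x)\otimes\bigl(g(H(x,2t-1))-g(x)\bigr)$ vanishes at infinity; two further reparametrizing extension morphisms then identify the connecting map of the $A_2$-extension with the suspension isomorphism used in Definition \ref{secondaryproductdefinition}. Without $\alpha$ and this comparison, the large diagram you propose to assemble has no square connecting the multiplication lift to $\Gamma^*$, so the argument is incomplete at exactly the point you flag as the main obstacle.
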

\begin{proof}
We will use the following abbreviations: Let
$$I=C_0(\mathcal{X}\setminus\{pt\}),\quad I_D=C_0(\mathcal{X}\setminus\{pt\};D)=I\otimes D,\quad A_D=\kbar_0(\mathcal{X};D),$$
so that $$A_D/I_D=\kfrak(\mathcal{X};D),$$
and $I_E,A_E,I_{D\otimes E},A_{D\otimes E}$ are defined analogously. Furthermore, let
\begin{align*}
A_1&=C_b(\mathcal{X},pt;\,D\otimes E),
\\A_2&=C_b(\,[0,1]\times\mathcal{X},\,[0,1]\times\{pt\}\cup\{0,1\}\times\mathcal{X};\,D\otimes E),
\\A_3&=C_b(\,[0,1]\times\mathcal{X},\,[0,1]\times\{pt\}\cup\{0\}\times\mathcal{X};\,D\otimes E),
\\I_2&=C_0(\,(0,1)\times (\mathcal{X}\setminus\{pt\});\,D\otimes E)
\\&=C_0(0,1)\otimes I\otimes D\otimes E\quad=C_0(0,1)\otimes I_{D\otimes E},
\\I_3&=C_0(\,(0,1]\times(\mathcal{X}\setminus\{pt\});\,D\otimes E)
\\&=C_0(0,1]\otimes I\otimes D\otimes E\quad=C_0(0,1]\otimes I_{D\otimes E},
\\I_4&=C_0(\,[0,1)\times(\mathcal{X}\setminus\{pt\});\,D\otimes E)
\\&=C_0[0,1)\otimes I\otimes D\otimes E\quad=C_0[0,1)\otimes I_{D\otimes E}.
\end{align*}
We will make use of the naturality of the connecting homomorphism in $K$-theory with respect to the following 
commutative diagrams with exact rows:
\begin{enumerate}
\item There is a homomorphism 
\begin{align*}
\alpha:\,I_D\otimes A_E&\to A_2
\\f\otimes g&\mapsto\,\left[(t,x)\mapsto\begin{cases}f(H(x,1-2t))\otimes g(x)&t\leq 1/2\\f(x)\otimes  g(H(x,2t-1))&t\geq 1/2\end{cases}\right]\,.
\end{align*}
It maps $I_D\otimes I_E$ to $I_2=C_0(0,1)\otimes I_{D\otimes E}$ and the restriction of $\alpha$ to $I_D\otimes I_E$ is in fact $\Gamma^*$.
\iffalse
The ranges of the homomorphisms
\begin{align*}
I_D&\to C_b(\,[0,1]\times\mathcal{X},\,[0,1]\times\{pt\};\,\tilde D\otimes\tilde E)
\\f&\mapsto\,\left[(t,x)\mapsto\begin{cases}f(H(x,1-2t))\otimes 1&t\leq 1/2\\f(x)\otimes 1&t\geq 1/2\end{cases}\right],
\\A_E&\to C_b(\,[0,1]\times\mathcal{X},\,[0,1]\times\{pt\};\,\tilde D\otimes\tilde E)
\\ g&\mapsto\,\left[(t,x)\mapsto \begin{cases}1\otimes g(x)&t\leq 1/2\\1\otimes g(H(x,2t-1))&t\geq 1/2\end{cases}\right],
\end{align*}
commute, so they define a homomorphism
$$\alpha:\,I_D\otimes A_E\to C_b(\,[0,1]\times\mathcal{X},\,[0,1]\times\{pt\};\,\tilde D\otimes\tilde E).$$
Its image is contained in $A_2\subset  C_b(\,[0,1]\times\mathcal{X},\,[0,1]\times\{pt\};\,\tilde D\otimes\tilde E)$ and it maps $I_D\otimes I_E$ to $I_2=C_0(0,1)\otimes I_{D\otimes E}$. 
Note, that the restriction of $\alpha$ to $I_D\otimes I_E$ is in fact $\Gamma^*$. 
\fi 
Thus we obtain the commutative diagram with exact rows
$$\xymatrix{
0\ar[r]&I_D\otimes I_E\ar[r]\ar[d]^{\Gamma^*}&I_D\otimes  A_E\ar[r]\ar[d]^{\alpha}& I_D\otimes  A_E/I_E\ar[r]\ar[d]^{\bar\alpha}&0
\\0\ar[r]&I_2\ar[r]&A_2\ar[r]&A_2/I_2\ar[r]&0
}$$
with  $\bar\alpha$ being the quotient of $\alpha$.

\item The quotient homomorphism $\bar\alpha$ from the previous diagram also appears  in the diagram
$$\xymatrix{
0\ar[r]&I_D\otimes A_E/I_E\ar[r]\ar[d]^{\bar\alpha}&A_D\otimes  A_E/I_E\ar[r]\ar[d]^{\beta}& A_D/I_D\otimes  A_E/I_E\ar[r]\ar[d]^{\bar\beta}&0
\\0\ar[r]&A_2/I_2\ar[r]&A_3/I_3\ar[r]&A_1/I_{D\otimes E}\ar[r]&0.
}$$
The lower row consists of the canonical maps: quotients of the inclusion $A_2\subset A_3$ and  evaluation at one $A_3\to A_1$. It is easily seen to be an exact sequence.

The middle vertical homomorphism $\beta:A_D\otimes A_E/I_E\to A_3/I_3$ is defined  by mapping $f\otimes\bar g$ to the residue class of
$$(t,x)\mapsto\begin{cases}f(H(x,1-2t))\otimes g(x)&t\leq 1/2\\f(x)\otimes g(x)&t\geq 1/2\,.\end{cases}$$

We quickly check commutativity of the left square in the diagram by considering the images of elementary tensors $f\otimes\bar g\in I_D\otimes A_E/I_E$. The lower left path takes it to the residue class of
$$(t,x)\mapsto\begin{cases}f(H(x,1-2t))\otimes g(x)&t\leq 1/2\\f(x)\otimes g(H(x,2t-1))&t\geq 1/2\,.\end{cases}$$
Its difference to the representative obtained by following the upper right path is
$$(t,x)\mapsto\begin{cases}0&t\leq 1/2\\f(x)\otimes [g(H(x,2t-1))-g(x)]&t\geq 1/2\,,\end{cases}$$
which
is contained in $I_3$, because $f$ vanishes at infinity.
Thus, the two compositions agree on elementary tensors and consequently on all elements.

\iffalse
The middle vertical homomorphism $\beta$ is defined the following way:
The homomorphisms
\begin{align*}
A_D&\to C_b(\,[0,1]\times\mathcal{X},\,[0,1]\times\{pt\};\,\tilde D\otimes\tilde E)
\\f&\mapsto\,\left[(t,x)\mapsto\begin{cases}f(H(x,1-2t))\otimes 1&t\leq 1/2\\f(x)\otimes 1&t\geq 1/2\end{cases}\right],
\\A_E&\to C_b(\,[0,1]\times\mathcal{X},\,[0,1]\times\{pt\};\,\tilde D\otimes\tilde E)
\\ g&\mapsto\,\left[(t,x)\mapsto 1\otimes g(x)\right]
\end{align*}
induce a homomorphism $A_D\otimes A_E\to A_3$ into the subalgebra $A_3\subset C_b(\,[0,1]\times\mathcal{X},\,[0,1]\times\{pt\};\,\tilde D\otimes\tilde E)$.  The composition $A_D\otimes A_E\to A_3\to A_3/I_3$ vanishes on $A_D\otimes I_E$ yielding the homomorphism $\beta$.

We quickly check commutativity of the left square in the diagram by considering the images of elementary tensors $f\otimes\bar g\in I_D\otimes A_E/I_E$. The lower left path takes it to the residue class of
$$(t,x)\mapsto\begin{cases}f(H(x,1-2t))\otimes g(x)&t\leq 1/2\\f(x)\otimes g(H(x,2t-1))&t\geq 1/2\end{cases}$$
while the upper right path takes it to  the residue class of
$$(t,x)\mapsto\begin{cases}f(H(x,1-2t))\otimes g(x)&t\leq 1/2\\f(x)\otimes g(x)&t\geq 1/2.\end{cases}$$
Their difference 
$$(t,x)\mapsto\begin{cases}0&t\leq 1/2\\f(x)\otimes [g(H(x,2t-1))-g(x)]&t\geq 1/2\end{cases}$$
is contained in $I_3$, because $f$ vanishes at infinity.
Thus, the two compositions agree on elementary tensors and consequently on all elements.
\fi

By exactness of both rows and commutativity of the left square, $\beta$ passes to the quotient and we obtain
$$\bar\beta:\,A_D/I_D\otimes A_E/I_E\to A_1/I_{D\otimes E}.$$

\item The homomorphism
$$\gamma:\,I_4\to A_2,\,f\mapsto\left[(t,x)\mapsto\begin{cases}f(0,H(x,1-2t))&t\leq1/2\\f(2t-1,x)&t\geq1/2\end{cases}\right]$$
restricts to a the homomorphism 
$$\gamma':\,I_2\to I_2,\,f\mapsto\left[(t,x)\mapsto\begin{cases}0&t\leq1/2\\f(2t-1,x)&t\geq1/2\end{cases}\right]$$
which is obviously homotopic to the identity. They induce the third diagram of exact sequences:
$$\xymatrix{
0\ar[r]&I_2\ar[r]&A_2\ar[r]&A_2/I_2\ar[r]&0
\\0\ar[r]&I_2\ar[r]\ar[u]^{\gamma'}&I_4\ar[r]\ar[u]^{\gamma}&I_{D\otimes E}\ar[r]\ar[u]^{\bar\gamma}&0
}$$
The induced homomorphism $\bar\gamma$ is easily seen to map $f\in I_{D\otimes E}$ to the residue class of
\begin{equation}\label{formulagamma3}
(t,x)\mapsto\begin{cases}f(H(x,1-2t))&t\leq1/2\\2(1-t)\cdot f(x)&t\geq1/2.\end{cases}
\end{equation}

Note furthermore, that the connecting homomorphism
$$K_*(I_{D\otimes E})\xrightarrow{\cong}K_{*+1}(I_2)=K_{*+1}(C_0(0,1)\otimes I_{D\otimes E})$$
is our preferred identification of these two groups.

\item The fourth is
$$\xymatrix{
0\ar[r]&A_2/I_2\ar[r]&A_3/I_3\ar[r]&A_1/I_{D\otimes E}\ar[r]&0
\\0\ar[r]&I_{D\otimes E}\ar[r]\ar[u]^{\bar\gamma}&A_{D\otimes E}\ar[r]\ar[u]^{\delta}&A_{D\otimes E}/I_{D\otimes E}\ar[r]\ar[u]^{\bar\delta}&0
}$$
with $\bar\gamma$ as above and $\delta$ mapping $f\in A_{D\otimes E}$ to the residue class of
$$(t,x)\mapsto\begin{cases}f(H(x,1-2t))&t\leq1/2\\ f(x)&t\geq1/2.\end{cases}$$
Note that for $f\in I_{D\otimes E}$ the difference between this function and the one given by \eqref{formulagamma3} lies in $I_3$. Thus, the left square commutes and the quotient homomorphism $\bar\delta$ is simply inclusion.
\end{enumerate}
We now obtain the diagram
$$\xymatrix{
K_i\left(\frac{A_D}{I_D}\right)\otimes K_j\left(\frac{A_E}{I_E}\right)\ar[r]\ar[d]&K_{i-1}(I_D)\otimes K_{j}\left(\frac{A_E}{I_E}\right)\ar[r]\ar[d]&K_{i-1}(I_D)\otimes K_{j-1}(I_E)\ar[d]
\\K_{i+j}\left(\frac{A_D}{I_D}\otimes \frac{A_E}{I_E}\right)\ar[r]\ar[d]^{\bar\beta_*}\ar@/_3pc/[dd]_{\nabla_*}&K_{i+j-1}\left(I_D\otimes \frac{A_E}{I_E}\right)\ar[r]\ar[d]^{\bar\alpha_*}&K_{i+j-2}(I_D\otimes I_E)\ar[d]^{\Gamma^*}
\\K_{i+j}\left(\frac{A_1}{I_{D\otimes E}}\right)\ar[r]&K_{i+j-1}\left(\frac{A_2}{I_2}\right)\ar[r]&K_{i+j-2}(I_2)
\\K_{i+j}\left(\frac{A_{D\otimes E}}{I_{D\otimes E}}\right)\ar[r]\ar[u]_{\bar\delta_*}&K_{i+j-1}(I_{D\otimes E})\ar[r]^{\cong}\ar[u]_{\bar\gamma_*}&K_{i+j-2}(I_2).\ar@{=}[u]_{\gamma'_*=\id}
}$$
All horizontal maps are connecting homomorphisms and the upper vertical maps are exterior products.

The upper left square commutes and the upper right commutes up to a sign  $(-1)^{i-1}$ by Axiom \ref{Ktheoryaxiomcompatibility}.
The lower four square commute by naturality of the connecting homomorphisms (Axiom \ref{Ktheoryaxiomsequence}) under the morphisms of exact sequences  just constructed.
The bulge on the left already commutes on the level of homomorphisms: $\bar\beta=\bar\delta\circ\nabla$

The outer left path from the upper left corner to the middle of the bottom side is multiplication on $K_*(\kfrak(\mathcal{X};\_))$ followed by co-assembly and the outer right path is co-assembly applied twice followed by the secondary product on $K^*(\mathcal{X};\_)$ multiplied by $(-1)^{i-1}$. Exactly this factor appears inside the diagram, so the claim follows.
\end{proof}

%%%%%%%%%%%%%%%%%%%%%%%%%%%%%%%%%%%%%%%%%%%%%%%%%%%%%%%%

\section{Coarse contractibility}\label{sec:coarsecontractability}
So far we studied multiplicativity of uncoarsened co-assembly maps. In this section we finally transfer the results to the coarsened versions. The required $\sigma$-contractibility of the Rips complex is provided by the following notion of coarse contractibility. 
\begin{mdef}\label{coarsecontraction}
Let $X$ be a coarse space. We call a Borel map
$$H:X\times\N\to X$$
a \emph{coarse contraction} to a point $pt\in X$ (and $X$ \emph{coarsely contractible}), if 
\begin{itemize}\itemsep0pt\parskip0pt 
\item $H\times H$ maps entourages of the product coarse structure on $X\times\N$ to entourages of $X$,
\item the restriction of $H$ to $X\times\{0\}$ is the identity on $X$,
\item for any bounded subset $B\subset X$ there is $N_B\in\N$ such that $H(x,n)=pt$ for all $(x,n)\in B\times \{n:n\geq N_B\}$,
\item $H(pt,n)=pt$ for all $n\in\N$.
\end{itemize}
\end{mdef}
This map lacks properness and is thus no coarse map. 
However, the induced map
$$\{(x,n)\in X\times\N\,:\,n\leq N_{\{x\}}\}\to X\times X,\quad (x,n)\mapsto(H(x,n),x)$$
is a coarse map. Note the similarity to our earlier definition of the map $\Gamma$.

Coarse contractibility is obviously invariant under coarse equivalences such as passing from a countably generated coarse space $X$ to a coarsely equivalent discrete coarse subspace $Z\subset X$.

Now here are some examples of coarse contractibility:
\begin{example}
Open cones $\mathcal{O}Y\approx Y\times[0,\infty)/Y\times\{0\}$ are coarsely contractible for any compact metrizable space $Y$. The coarse contraction $H$ is defined by $H((y,t),n):=(y,\max\{t-n,0\})$. The same argument shows that the foliated cones of \cite{RoeFoliations} are coarsely contractible.
\end{example}
\begin{example}
Locally compact, complete $\operatorname{CAT}(0)$ spaces $X$ are coarsely contractible by moving a point $y\in X$ with constant speed along the unique geodesic from $y$ to some fixed base point $p\in X$.
\end{example}
We only assume locally compactness and completeness in this and the following example, because our definition of coarse spaces involved a compatible locally compact topology on the underlying set. This premise becomes unnecessary, if we define coarse contractibility more generally for sets equipped with a coarse structure.
\begin{example}
In this example we show that locally compact, complete hyperbolic metric spaces (cf.\ \cite{BriHae,GroHyperbolic}) are coarsely contractible. 
So in particular, Gromov's hyperbolic groups equipped with an arbitrary word metric are coarsely contractible.

We shall first recall the definition of hyperbolic metric spaces as presented in \cite[Section 4.2]{HigGue}.
Let $X$ be a metric space. A \emph{geodesic segment} in $X$ is a curve $\gamma:[a,b]\to X$ such that 
$$d(\gamma(s),\gamma(t))=|s-t|\quad\forall s,t\in [a,b]\,.$$
A \emph{geodesic metric space} is a metric space in which each two points are connected by a geodesic segment.
A \emph{geodesic triangle} in $X$ consists of three points of $X$ and for each two of these points a geodesic segment connecting them.
A geodesic triangle is called $D$-slim, $D\geq0$, if each point on each edge has distance at most $D$ from one of the other two edges.
\begin{mdef}
A geodesic metric space $X$ is called \emph{$D$-hyperbolic}, if every geodesic triangle in $X$ is $D$-slim. It is called \emph{hyperbolic} if it $D$-hyperbolic for some $D\geq0$.
\end{mdef}
Now let $X$ be a $D$-hyperbolic space and $pt\in X$. For each $x\in X$ we choose a geodesic segment $\gamma_x:[0,l_x]\to X$ with $\gamma(0)=x$ and $\gamma(l_x)=pt$,  so $l_x=d(x,pt)$. We extend each $\gamma_x$ to $[0,\infty)$ by $\gamma(t)=pt$ for $t>l_x$ and define
$$H:\,X\times\N\to X,\quad (x,n)\mapsto \gamma_x(n)\,.$$
We claim that $H$ is a coarse contraction to the point $pt$.\footnote{We do not care whether this map is actually a Borel map, because we may always choose a  discrete, coarsely equivalent subspace $Z\subset X$, restrict $H$ to $Z\times\N$ and finally re-extend to $X\times\N$ in a Borel fashion to obtain a Borel coarse contraction. }
Clearly $H|_{X\times\{0\}}=\id_X$, $H(pt,n)=pt$ for all $n\in \N$ and if $B\subset X$ is bounded and $x\in B$, then $H(x,n)=pt$ for all $n\geq N_B:=\sup_{x\in B}d(pt,x)$.

It remains to show that entourages are mapped to entourages. 
Let $R>0$. We claim that for all $x,y\in X$ with $d(x,y)\leq R$ and $n\in\N$ the inequality $d(H(x,n),H(y,n))\leq 4D+R$ holds. 
Denote the geodesic triangle consisting of the points $pt,x,y$, the geodesics $\gamma_x,\gamma_y$ and another geodesic $\overline{xy}$ between $x,y$ by $\triangle$.
We may assume w.\,l.\,o.\,g.\ that $l_x\geq l_y$. The triangle inequality in $\Delta$ implies $\delta:=l_x-l_y\leq d(x,y)\leq R$.

Assume that there is $n\in\N$ such that 
$$M:=d(\gamma_x(n),\gamma_y(n))=d(H(x,n),H(x,m))>4D+R\,.$$
Define $z:=\gamma_x(n)$, $w:=\gamma_y(n)$. 
Clearly $n< l_y$, because otherwise $d(z,w)=d(z,pt)=l_x-\max(n,l_x)\leq l_x-l_y=\delta\leq R$.
Furthermore, the inequality 
$$2n+R\geq d(z,x)+d(x,y)+d(y,z)\geq M> 4D+R$$
implies $n>2D$. Because $\triangle$ is $D$-slim and 
$$\operatorname{dist}(z,\overline{xy})\geq d(z,x)-d(x,y)\geq n-D>D$$
there must be a point $v=\gamma_y(t)$ on the geodesic $\gamma_y$ such that $d(z,v)\leq D$.
\begin{description}
\item[Case 1: $t\leq n$.] This means that $v$ lies between $w$ and $y$ on $\gamma_y$.  The two triangle inequalities
\begin{align*}
M=d(z,w)&\leq d(z,v)+d(v,w)\leq D+(n-t)\,,
\\l_y-t=d(pt,v)&\leq d(pt,z)+d(z,v)\leq (l_x-n)+D
\end{align*}
imply $M\leq 2D+l_x-l_y\leq 2D+R$, contradicting our assumption.
\item[Case 2: $t\geq n$.] This time $v$ lies between $w$ and $pt$ on $\gamma_y$ and we consider the triangle inequalities
\begin{align*}
M=d(z,w)&\leq d(z,v)+d(v,w)\leq D+(t-n)\,,
\\l_x-n=d(pt,z)&\leq d(pt,v)+d(z,v)\leq (l_y-t)+D
\end{align*}
which imply $M\leq l_y-l_x+D\leq D$. Again, there is a contradiction.
\end{description}
For fixed $x$ we clearly have $d(H(x,n),H(x,m))\leq|n-m|$ for all $n,m\in\N$. Thus, for arbitrary $(x,m),(y,m)\in X\times\N$ of distance at most $R$ the equality 
$$d(H(x,m),H(y,m))\leq 4D+2R$$
holds. Therefore $H$ is a coarse contraction to the point $pt$.
\end{example}

\begin{thm}\label{coarsesigmacontractibility}
Let $Z$ be a countably generated discrete coarse space which is coarsely contractible. Then the Rips complex $\mathcal{P}_Z$ is $\sigma$-contractible.
\end{thm}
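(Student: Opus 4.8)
The plan is to build the $\sigma$-contraction $\mathcal{H}\colon \mathcal{P}_Z\times[0,1]\to\mathcal{P}_Z$ directly from a coarse contraction $h\colon Z\times\N\to Z$ to $pt$ (Definition~\ref{coarsecontraction}), which exists by hypothesis. Each map $h(\,\cdot\,,n)\colon Z\to Z$ pushes forward Dirac measures, and I would interpolate these push-forwards linearly inside the Rips complex: writing $s=n+r$ with $n=\lfloor s\rfloor$ and $r\in[0,1)$, set $\phi_s(z):=(1-r)\,\delta_{h(z,n)}+r\,\delta_{h(z,n+1)}\in\mathcal{P}_Z$ for $z\in Z$. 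Letting $F:=\{(k,k')\in\N\times\N:|k-k'|\le1\}$, which is an entourage of $\N$, this is a genuine point of the complex because the edge joining $h(z,n)$ and $h(z,n+1)$ exists: the pair $((z,n),(z,n+1))$ lies in the product entourage $\Delta_Z\times F$, so $h\times h$ sends it into some entourage, i.e.\ $(h(z,n),h(z,n+1))$ is controlled. For a general $\mu=\sum_z\mu(z)\delta_z$ I then take the convex combination $\Phi_s(\mu):=\sum_z\mu(z)\,\phi_s(z)$, again a finitely supported probability measure, and finally reparametrise by $\mathcal{H}(\mu,t):=\Phi_{t/(1-t)}(\mu)$ for $t<1$ and $\mathcal{H}(\mu,1):=\delta_{pt}$.

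The boundary conditions are immediate: $\Phi_0(\mu)=\sum_z\mu(z)\delta_{h(z,0)}=\mu$ gives $\mathcal{H}|_{t=0}=\id$; since $h(pt,n)=pt$ we get $\phi_s(pt)=\delta_{pt}$, hence $\mathcal{H}(pt,t)=\delta_{pt}$ for all $t$; and $\mathcal{H}(\,\cdot\,,1)\equiv\delta_{pt}$ by definition. To see that $\mathcal{H}$ respects the filtration, fix $n$ and $\mu\in P_{Z,n}$, so $\supp\mu\times\supp\mu\subset E_n$. For a given $s$, any two points of $\supp\Phi_s(\mu)$ have the form $h(z,k),h(z',k')$ with $z,z'\in\supp\mu$ and $k,k'\in\{n,n+1\}$, hence $((z,k),(z',k'))\in E_n\times F$; applying $h\times h$ places $(h(z,k),h(z',k'))$ in a single entourage $E_m$ depending only on $n$. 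Thus $\Phi_s(\mu)\in P_{Z,m}$ for all $s$, and enlarging $m$ if necessary we obtain $\mathcal{H}(P_{Z,n}\times[0,1])\subset P_{Z,m}$ with $m\geq n$.

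The real work is continuity of $\mathcal{H}|_{P_{Z,n}\times[0,1]}\colon P_{Z,n}\times[0,1]\to P_{Z,m}$. Since $P_{Z,m}$ is locally finite, its weak topology agrees with the metric topology, which on finitely supported probability measures is that of the $\ell^1$-distance $\|\nu-\nu'\|_1=\sum_w|\nu(w)-\nu'(w)|$. On $P_{Z,n}\times[0,\infty)$ the estimate
$$\|\Phi_s(\mu)-\Phi_{s'}(\mu')\|_1\le\|\mu-\mu'\|_1+2\,|s-s'|,$$
obtained from $\|\phi_s(z)\|_1=1$ together with the bound $\|\phi_s(z)-\phi_{s'}(z)\|_1\le2|s-s'|$ uniform in $z$ (the path $\phi_\bullet(z)$ has $\ell^1$-speed at most $2$), shows that $\Phi$ is Lipschitz, so $\mathcal{H}$ is continuous on $P_{Z,n}\times[0,1)$.

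Continuity at $t=1$ is the main obstacle, because over $P_{Z,n}$ the time $N_{\supp\mu}$ needed to reach $pt$ is unbounded, so $\Phi_s$ does \emph{not} converge uniformly to $\delta_{pt}$; I would therefore argue pointwise. Fix $\mu$ with finite, hence bounded, support $B=\supp\mu$ and let $N:=N_B$ be as in Definition~\ref{coarsecontraction}. For $s\ge N$ and any $\mu'$, since $\phi_s(w)=\delta_{pt}$ whenever $w\in B$, while $\|\phi_s(w)-\delta_{pt}\|_1\le2$ always and $\mu(w)=0$ for $w\notin B$, one obtains
$$\|\Phi_s(\mu')-\delta_{pt}\|_1\le\sum_w\mu'(w)\,\|\phi_s(w)-\delta_{pt}\|_1\le2\sum_{w\notin B}\mu'(w)\le2\,\|\mu'-\mu\|_1.$$
Hence, as $(\mu',t')\to(\mu,1)$ we have $\mu'\to\mu$ and $s(t')\to\infty$, so eventually $s(t')\ge N$ and the right-hand side tends to $0$. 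This yields continuity at every point of $P_{Z,n}\times\{1\}$ and completes the verification that $\mathcal{H}$ is a $\sigma$-contraction of $\mathcal{P}_Z$ onto $pt$.
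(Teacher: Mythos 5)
Your construction is the same as the paper's: push Dirac measures forward along $H(\cdot,n)$, interpolate linearly between consecutive stages (your reparametrisation $s=t/(1-t)$ versus the paper's dyadic strips is cosmetic), verify the filtration condition via the entourage $\{((x,k),(y,l)):(x,y)\in E_n,\ |k-l|\le 1\}$, and handle continuity at $t=1$ using that the contraction is eventually constant on bounded sets. Your quantitative $\ell^1$-estimates replace the paper's simplex-by-simplex eventual-constancy argument, but this is only a difference in bookkeeping, not in approach; the proof is correct.
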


\begin{proof}
Let $H:Z\times\N\to Z$ be a coarse contraction of $Z$ to a point $pt\in Z$. For every $n\in\N$ we obtain a continuous map
$$H_n:\mathcal{P}_Z\to\mathcal{P}_Z$$
by pushing forward probability measures along the maps $Z\to Z,\, z\mapsto H(z,n)$.

A $\sigma$-contraction $H_*:\,\mathcal{P}_Z\times[0,1]\to\mathcal{P}_Z$ of the Rips complex to the same point $pt$ is then given on the strips $\mathcal{P}_Z\times[1-2^{-n},1-2^{-n-1}]$ by interpolating linearly between
$$H_*(\mu,1-2^{-n})=H_n\quad\text{and}\quad H_*(\mu,1-2^{-n-1})=H_{n+1}$$
and defining $H_*|_{\mathcal{P}_Z\times\{1\}}:=pt$.

Obviously, $H_*$ restricts to the identity on $\mathcal{P}_Z\times\{0\}$ and $H_*(\mathcal{P}_Z\times\{1\}\cup\{pt\}\times[0,1])=\{pt\}$.

Continuity of $H_*$ at $\mathcal{P}_Z\times\{1\}$ follows from continuity of the restrictions $H_*|_{\Delta\times(1-\delta_\Delta,1]}$ for every simplex $\Delta$ of $\mathcal{P}_Z$ and some $\delta_\Delta>0$. But this is trivial, because the  third condition in the definition of coarse contractibility ensures that these restrictions are constantly equal to $pt$ for $\delta_\Delta>0$ small enough.

It remains to show that for each $n$ there is $m\geq n$ such that $H_*(\mathcal{P}_n\times[0,1])\subset\mathcal{P}_m$. Denote the generating entourages of the coarse structure on $Z$ as usually by $E_n$. The first property of coarse contractibility ensures that the entourage
$$\{((x,k),\,(y,l))\in(Z\times\N)^2\,|\,(x,y)\in E_n, |k-l|\leq 1\}$$
of the product coarse structure on $Z\times \N$ is mapped to some entourage $E_m$ of $Z$ by $H\times H$. If $\mu\in\mathcal{P}_n$, i.\,e.\ $\mu$ is a probability measure on $Z$ with $\supp\mu\times\supp\mu\subset E_n$, then for all $s\in[0,1]$ and $j\in\N$ the probability measure $\tilde\mu:=(1-s)\cdot H_j(\mu)+s\cdot H_{j+1}(\mu)$ satisfies
$$\supp\tilde\mu\subset\supp H_j(\mu)\cup\supp H_{j+1}(\mu)=H(\supp\mu\times\{j\}\cup\supp\mu\times\{j+1\})$$
and therefore we have
$\supp\tilde\mu\times\supp\tilde\mu\subset E_m,$
i.\,e.\ $\tilde\mu\in P_m$. This proves $H_*(\mathcal{P}_n\times[0,1])\subset\mathcal{P}_m$. 
\end{proof}

We shall now summarize our results to obtain the main result of this paper:
\begin{thm}\label{mainthmcoeff}
Let $X$ be a coarsely contractible countably generated coarse space. Then the unreduced coarse co-assembly map with coefficients in $D$,
$$\mu^*:K_{*}(\cfrak(X;D))\to KX^{1-*}(X\setminus\{pt\};D)\,,$$
is multiplicative.
\end{thm}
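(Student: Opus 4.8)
The plan is to deduce this from the uncoarsened, unstable multiplicativity already established in Theorem \ref{maintheorem}, using the Rips complex as the relevant $\sigma$-coarse space. First I would replace $X$ by a countably generated discrete coarse subspace $Z\subset X$ that is coarsely equivalent to $X$ and contains $pt$; such a $Z$ exists by \cite[Lemma 4]{EmeMey}, and since coarse contractibility is invariant under coarse equivalence (as observed after Definition \ref{coarsecontraction}), $Z$ is again coarsely contractible. Theorem \ref{coarsesigmacontractibility} then produces a $\sigma$-contraction $H_*$ of the Rips complex $\mathcal{P}_Z$ onto $pt$, so that $\mathcal{P}_Z$ carries exactly the data that Theorem \ref{maintheorem} requires.

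Next I would unwind the definitions on both sides. By Definition \ref{coassemblydefUnredCoar} together with the isomorphism $\cfrak(X;D)\cong\cfrak(\mathcal{P}_Z;D)$ of Lemma \ref{coarselyequivalentfiltration}, the stable coarsened co-assembly map $\mu^*\colon K_*(\cfrak(X;D))\to KX^{1-*}(X\setminus\{pt\};D)$ is \emph{by construction} the uncoarsened, unstable co-assembly map of Definition \ref{coassemblydefUnredUncUnst} for the $\sigma$-coarse space $\mathcal{P}_Z$ with coefficients $D\otimes\K$, under the identifications $\cfrak(\mathcal{P}_Z;D)=\kfrak(\mathcal{P}_Z;D\otimes\K)$ and $KX^{1-*}(X\setminus\{pt\};D)=K^{1-*}(\mathcal{P}_Z\setminus\{pt\};D\otimes\K)$, the latter using stability of $K$-theory. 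Likewise the corona product on $K_*(\cfrak(X;D))$ corresponds under these isomorphisms to the product of Definition \ref{coronaproduct} on $K_*(\kfrak(\mathcal{P}_Z;D\otimes\K))$, and the secondary ring structure on $KX^*(X\setminus\{pt\};D)$ is the secondary product of Definition \ref{secondaryproductdefinition} on $K^*(\mathcal{P}_Z\setminus\{pt\};\_)$; since that product is independent of the chosen $\sigma$-contraction, I may take it to be the one induced by $H_*$.

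With all three structures matched, I would invoke Theorem \ref{maintheorem} for $\mathcal{X}=\mathcal{P}_Z$, the $\sigma$-contraction $H_*$, and coefficient algebras $D\otimes\K$ and $E\otimes\K$. Its commutative square is precisely the statement that $\mu^*$ carries the corona product to the secondary product, i.e.\ that $\mu^*$ is multiplicative. The stabilization $(D,E)\mapsto(D\otimes\K,E\otimes\K)$ turns the exterior coefficient product into $(D\otimes\K)\otimes(E\otimes\K)\cong D\otimes E\otimes\K$ via the fixed isomorphism $\K\otimes\K\cong\K$, which is exactly the coefficient algebra $D\otimes E$ (stabilized) appearing in the target of the stable products, and the degree shift $i+j\rightsquigarrow i+j-1$ matches the $KX$-regrading $K_*(\cfrak)\to KX^{1-*}$.

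The work here is not analytic but a matter of bookkeeping across three successive identifications: coarsening ($X\rightsquigarrow\mathcal{P}_Z$), stabilization ($D\rightsquigarrow D\otimes\K$), and the uncoarsened/unstable reformulation of co-assembly. The point I would treat most carefully is that each of these is simultaneously compatible with \emph{both} products. Concretely, one must check that the isomorphism $\cfrak(X;D)\cong\cfrak(\mathcal{P}_Z;D)$ respects the products of Definition \ref{coronaproduct} — which follows from naturality of the multiplication homomorphism $\nabla$ under the coarse equivalence — and that passing from $D$ to $D\otimes\K$ intertwines the unstable $\nabla$ with the stable one. The independence statements already in hand (independence of the corona product from the choice $\K\otimes\K\cong\K$, and independence of the secondary product from the contraction) are exactly what render these compatibilities routine, so no further diagram chase beyond Theorem \ref{maintheorem} is needed.
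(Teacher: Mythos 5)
Your proposal is correct and follows exactly the route the paper takes: apply Theorem \ref{coarsesigmacontractibility} to obtain a $\sigma$-contraction of the Rips complex $\mathcal{P}_Z$, transport the products through the identifications of Definition \ref{coassemblydefUnredCoar} and Lemma \ref{coarselyequivalentfiltration}, and invoke Theorem \ref{maintheorem} with coefficients $D\otimes\K$, $E\otimes\K$. The paper states this in two sentences and calls it trivial; your write-up simply makes the bookkeeping of the coarsening, stabilization, and product-compatibility steps explicit.
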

Of course, the secondary product on $KX^{1-*}(X\setminus\{pt\};D)$ is obtained by applying Definition \ref{secondaryproductdefinition} to the $\sigma$-contraction of the Rips complex obtained as in Theorem \ref{coarsesigmacontractibility} from the given coarse contraction.
With these ingredients, the theorem follows trivially from Theorem \ref{maintheorem}.

For $D=\C$ we obtain a ring homomorphism:
\begin{thm}\label{mainthm}
Let $X$ be a coarsely contractible countably generated coarse space. Then the unreduced coarse co-assembly map 
$$\mu^*:K_{*}(\cfrak(X))\to KX^{1-*}(X\setminus\{pt\})$$
is a ring homomorphism.
\end{thm}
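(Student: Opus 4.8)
The plan is to recognize Theorem~\ref{mainthm} as the special case $D=\C$ of Theorem~\ref{mainthmcoeff}, with the extra observation that when the coefficient algebra is trivial the ``interior in space, exterior in coefficients'' products collapse to genuine ring multiplications. Concretely, the ring structure on the domain is the stable corona product of Definition~\ref{coronaproduct} with $D=E=\C$, namely $K_i(\cfrak(X))\otimes K_j(\cfrak(X))\to K_{i+j}(\cfrak(X))$, which is already known there to be associative and graded commutative; the ring structure on the target is the secondary product of Definition~\ref{secondaryproductdefinition} with $D=E=\C$, shown associative in Theorem~\ref{thm:secprodassoc} and graded commutative in the proposition preceding it. Thus the only thing left to prove is that $\mu^*$ carries the first product to the second, i.e.\ multiplicativity.

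To obtain multiplicativity I would assemble the hypotheses of Theorem~\ref{maintheorem}. First I pass from $X$ to a countably generated discrete coarse subspace $Z\subset X$ that is coarsely equivalent to $X$ and contains $pt$; coarse contractibility is invariant under coarse equivalence, so $Z$ carries a coarse contraction to $pt$. Theorem~\ref{coarsesigmacontractibility} then upgrades this to a $\sigma$-contraction $H_*$ of the Rips complex $\mathcal{P}_Z$ onto $pt$, and it is exactly this $\sigma$-contraction that defines, via Definition~\ref{secondaryproductdefinition}, the secondary product on $KX^{1-*}(X\setminus\{pt\})=K^*(\mathcal{P}_Z\setminus\{pt\})$. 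Since the unreduced stable co-assembly map of Definition~\ref{coassemblydefUnredCoar} is by construction the uncoarsened map of Definition~\ref{coassemblydefUnredUncUnst} applied to $\mathcal{X}=\mathcal{P}_Z$ (with coefficients in $\K$), I may now invoke Theorem~\ref{maintheorem} for $\mathcal{X}=\mathcal{P}_Z$ and coefficient algebras $D=E=\K$. Its commuting square is precisely the statement that $\mu^*$ intertwines the two products.

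The final bookkeeping step is to see that this square really expresses a ring homomorphism rather than a mere bilinear compatibility. Here one uses the Morita identification $\K\otimes\K\cong\K$ hidden in the stable product of Definition~\ref{coronaproduct} together with the trivial $\C\otimes\C\cong\C$: these identify the targets $\cfrak(\mathcal{P}_Z;\K\otimes\K)$ and $K^*(\mathcal{P}_Z\setminus\{pt\};\K\otimes\K)$ of the two exterior pairings with $\cfrak(X)$ and $KX^{1-*}(X\setminus\{pt\})$ respectively, so the pairings become the advertised ring multiplications. Additivity of $\mu^*$ is automatic, since as a connecting homomorphism it is a homomorphism of abelian groups; combined with multiplicativity this makes it a ring homomorphism. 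I expect no genuine obstacle at this level: all of the real difficulty---the four auxiliary exact sequences and the product--boundary compatibility Axiom~\ref{Ktheoryaxiomcompatibility} in Theorem~\ref{maintheorem}, and the verification that a coarse contraction respects the Rips filtration in Theorem~\ref{coarsesigmacontractibility}---has already been discharged, so what remains is purely the organizational glue described above.
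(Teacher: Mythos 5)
Your proposal is correct and follows the paper's own route exactly: the paper also deduces Theorem \ref{mainthm} by specializing Theorem \ref{mainthmcoeff} to $D=\C$, which in turn is obtained by feeding the $\sigma$-contraction of the Rips complex produced by Theorem \ref{coarsesigmacontractibility} into Theorem \ref{maintheorem} with the coefficient algebras set to $\K$ and using the (choice-independent) identification $\K\otimes\K\cong\K$. Nothing is missing.
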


%%%%%%%%%%%%%%%%%%%%%%%%%%%%%%%%%%%%%%%%%%%%%%%%%%%%%%%%

\appendix
\section{Fréchet algebras}\label{sec:Frechet}
$\sigma$-$C^*$-algebras are a special case of Fréchet algebras:
\begin{mdef}[{c.f. \cite[Section 1]{PhiFre}} and the reference therein]
A locally multiplicatively convex Fréchet algebra (over the complex numbers) is a complex topological algebra such that its topology
\begin{itemize}\itemsep0pt\parskip0pt 
\item is Hausdorff,
\item is generated by a countable family of submultiplicative semi-norms $(p_n)_{n\in\N}$, 
\item is complete with respect to the family of semi-norms.
\end{itemize}
In the following, even without explicit mention, all Fréchet algebras appearing are assumed to be locally multiplicatively convex.
Furthermore, we will always assume that there are constants $c_n$, such that $p_{n}\leq c_np_{n+1}$ for all $n$.
\end{mdef}
Important examples of Fréchet algebras are the following algebras of smooth functions \cite[Section 1.1]{CunBiv}:

Let $C^\infty[a,b]$ denote the algebra of smooth functions $f:[a,b]\to\C$, whose derivations vanish at the endpoints. Furthermore, let $C_0^\infty(a,b]$, $C_0^\infty[b,a)$, $C_0^\infty(a,b)$ denote the subalgebras of $C^\infty[a,b]$ consisting of those functions which vanish at $a$ resp.\  $b$ resp.\ $a$ and $b$. All of them are Fréchet algebras with topology generated by the submultiplicative norms
$$p_n(f)=\|f\|+\|f'\|+\frac12\|f''\|+\dots+\frac1{n!}\|f^{(n)}\|.$$
They are the ingredients of a short exact sequence\footnote{Again, a sequence $0\to I\xrightarrow{\alpha}A\xrightarrow{\beta}B\to 0$ of Fréchet-algebras is called \emph{exact} if it is algebraically exact, $\alpha$ is a homeomorphism onto its image, and $\beta$ defines a homeomorphism $A/\ker(\beta)\to B$.}
%%%%%Any algebraically exact sequence of Frèchet spaces ist topologically exact. [Bill Casselman, Essays in analysis] 
 of Fréchet algebras
\begin{equation}\label{smoothexact}
0\to C_0^\infty(0,1)\to C_0^\infty[0,1)\to\C\to 0
\end{equation}
which splits by a continuous linear map $\C\to C_0^\infty[0,1)$.

We will make use of the external product in $kk_*$-theory. It is formulated in terms of the projective tensor product of Fréchet algebras:
\begin{mdef}[{cf.\ \cite[Section 43]{Treves},\cite[Theorem 2.3.(2e)]{PhiFre} and \cite[Section 1]{CunBiv}}]
Let $A,B$ be  two Fréchet algebras $A,B$ whose topologies are generated by the seminorms $(p_n),(q_n)$.
Their projective tensor product $A\otimes_\pi B$ is the completion of the algebraic tensor product with respect to the seminorms 
$$p\otimes_\pi q(z)=\inf\left\{\sum_{i=1}^mp(a_i)q(b_i):\,z=\sum_{i=1}^ma_i\otimes b_i,\,a_i\in A,b_i\in B\right\},$$
where $p$ is a continuous seminorm on $A$ and $q$ is a continuous seminorm on $B$. 

The projective tensor product of Fréchet algebras is again a Fréchet algebra, its topology being generated by the seminorms $p_n\otimes_\pi q_n$.
\end{mdef}
Its universal property is of course the following:
\begin{lem}\label{projtensoruniv}
Let $A,B,C$ be Fréchet algebras and let $f:A\times B\to C$ be a bilinear map such that for all continuous seminorms $r$ on $C$ there are continuous seminorms $p$ on $A$ and $q$ on $B$ and $K>0$ satisfying $r(f(a,b))\leq p(a)q(b)$ for all $a\in A$ and $b\in B$. Then there is a unique continuous  linear map $g:A\otimes_\pi B\to C$ such that $g(a\otimes b)=f(a,b)$ for all $a\in A,b\in B$.

If $g$ restricted to $A\odot B$ is an algebra homomorphism, then so is $g$.
\end{lem}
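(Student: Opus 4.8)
The plan is to mimic the familiar $C^*$-algebra argument: first obtain a map on the algebraic tensor product from its universal property, then use the estimate in the hypothesis to show the resulting map is continuous for the projective seminorms, and finally extend by density and completeness.

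Concretely, I would start from the purely algebraic universal property of $A\odot B$: bilinearity of $f$ produces a unique linear map $g_0\colon A\odot B\to C$ with $g_0(a\otimes b)=f(a,b)$. The heart of the argument is the continuity estimate. Fix a continuous seminorm $r$ on $C$ and pick $p,q,K$ as in the hypothesis; after replacing $q$ by $Kq$ we may assume $r(f(a,b))\le p(a)q(b)$. For $z\in A\odot B$ and any representation $z=\sum_{i=1}^m a_i\otimes b_i$,
$$r(g_0(z))=r\Big(\sum_{i=1}^m f(a_i,b_i)\Big)\le\sum_{i=1}^m r(f(a_i,b_i))\le\sum_{i=1}^m p(a_i)q(b_i).$$
The key observation---and the only subtle point---is that the left-hand side is independent of the chosen representation, so passing to the infimum over all representations gives $r(g_0(z))\le (p\otimes_\pi q)(z)$, which is exactly continuity of $g_0$ for the topologies defining $A\otimes_\pi B$ and $C$.

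Since $A\odot B$ is dense in its completion $A\otimes_\pi B$ and $C$ is complete, $g_0$ then extends uniquely to a continuous linear map $g\colon A\otimes_\pi B\to C$ satisfying $g(a\otimes b)=f(a,b)$. Uniqueness needs no further work: a continuous linear map is determined by its values on the dense subspace $A\odot B$, and there it is forced to equal $g_0$ by linearity.

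For the final clause I would argue by density once more. Assuming $g|_{A\odot B}$ is multiplicative, take $x,y\in A\otimes_\pi B$ and approximating sequences $x_n,y_n\in A\odot B$; joint continuity of multiplication in $A\otimes_\pi B$ and in $C$, together with continuity of $g$, yields
$$g(xy)=\lim_n g(x_ny_n)=\lim_n g(x_n)g(y_n)=g(x)g(y).$$
I do not expect a genuine obstacle here; essentially all of the content sits in the representation-independence of $r(g_0(z))$ that lets the estimate pass to the infimum defining the projective seminorm.
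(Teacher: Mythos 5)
Your argument is correct and is exactly the standard one the paper has in mind; the paper in fact states this lemma without proof, presenting it as the routine universal property of the projective tensor product. The one step you single out as subtle --- that $r(g_0(z))$ is independent of the representation $z=\sum a_i\otimes b_i$, so the estimate passes to the infimum defining $p\otimes_\pi q$ --- is indeed the whole content, and the density/completeness arguments for the extension, its uniqueness, and multiplicativity are all standard.
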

The projective tensor product with nuclear Fréchet algebras is exact \cite[Theorem 2.3.(3b)]{PhiFre}. So is  the projective tensor product of any Fréchet algebra with short exact sequences of Fréchet algebras, which split by a continuous linear map \cite[Remark following Definition 3.8]{CunBiv}. This is, because the projective tensor product is in fact a tensor product of Fréchet spaces. In particular, the projective tensor product of \eqref{smoothexact} with another Fréchet algebra $A$,
\begin{equation}\label{smoothtensorAexact}
0\to C_0^\infty(0,1)\otimes_\pi A\to C_0^\infty[0,1)\otimes_\pi A\to A\to 0,
\end{equation}
is exact.

For $\sigma$-$C^*$-algebras $A,B$, the universal property of Lemma \ref{projtensoruniv}
 yields a continuous homomorphism
\begin{equation}
A\otimes_\pi B\to A\otimes B,
\end{equation}
which will enables us to pass to the maximal tensor product of $\sigma$-$C^*$-algebras whenever an exact tensor product is needed.

%%%%%%%%%%%%%%%%%%%%%%%%%%%%%%%%%%%%%%%%%%%%%%%%%%%%%%%%

\section{Cuntz's bivariant theory}\label{sec:kkTheory}
The bivariant groups $kk_*(A,B)$ ($*=0,1$) of \cite{CunBiv} are defined for all locally multiplicatively convex topological algebras $A,B$, but we are only interested in the special case of Fréchet algebras here. We mention some of the properties:
\begin{enumerate}
\item There is a composition product
$$\cdot:\,kk_i(A,B)\otimes kk_j(B,C) \to kk_{i+j}(A,C)$$
and a graded commutative exterior product
$$\otimes:\,kk_i(A_1,B_1)\otimes kk_j(A_2,B_2)\to kk_{i+j}(A_1\otimes_\pi A_2,B_1\otimes_\pi B_2)$$
whose compatibility properties are completely analogous to those of Kasparov's $KK$-theory. 
%\footnote{
Some of them, but not all, are mentioned and proved in \cite{CunBiv}. The proof of the remaining properties is straightforward with the methods introduced therein.
%}
\item Continuous homomorphisms $\alpha:A\to B$ induce elements $kk(\alpha)\in kk_0(A,B)$. For $\alpha:A\to B$ and $\beta:B\to C$ we have $kk(\beta\circ\alpha)=kk(\alpha)\cdot kk(\beta)$. The groups $kk_*$ are functorial in the second and contravariantly functorial in the first variable with respect to continuous homomorphisms. This functoriality is given by the composition product with the elements induced by the homomorphisms.
\item The covariant functor $kk_*(\C,\_)$ (defined on the category of Fréchet algebras) is naturally isomorphic to the $K$-theory functor $K_*$  of Phillips.
\item For each $A$, there is an invertible element $\mathfrak{b}_A\in kk_1(A,C_0^\infty(0,1)\otimes_\pi A)$. In fact, it is enough to know $\mathfrak{b}=\mathfrak{b}_\C\in K_1(C_0^\infty(0,1))$, because $\mathfrak{b}_A=\mathfrak{b}\otimes kk(\id_A)$. 
The composition product 
$$\_\cdot \mathfrak{b}_B:\,kk_i(A,B)\to kk_{i+1}(A,C_0^\infty(0,1)\otimes_\pi B)$$
is an isomorphism, which coincides up to a sign with the exterior product
$$\mathfrak{b}\otimes \_:\,kk_i(A,B)\to kk_{i+1}(A,C_0^\infty(0,1)\otimes_\pi B).$$
We will always use the latter as the Bott periodicity identification $kk_i(A,B)\cong kk_{i+1}(A,C_0^\infty(0,1)\otimes_\pi B)$, which becomes 
\begin{equation}\label{BottPeriodicity}
\mathfrak{b}\otimes\_:\,K_i(A)\cong K_{i+1}(C_0^\infty(0,1)\otimes_\pi A)
\end{equation}
in the special case of $K$-theory of Fréchet algebras.
\item There are the usual six-term exact sequences
\begin{equation}\label{sixtermkk}
\xymatrix{
kk_0(D,I)\ar[r]&kk_0(D,A)\ar[r]&kk_0(D,B)\ar[d]
\\kk_1(D,B)\ar[u]&kk_1(D,A)\ar[l]&kk_1(D,I)\ar[l]}
\end{equation}
(and analogously in the first variable) 
 for all short exact sequences 
\begin{equation}\label{splitshortexactsequence}
0\to I\to A\xrightarrow{q} B\to 0
\end{equation}
of Fréchet algebras, which split by a continuous linear map. 
We briefly recall the construction of the connecting homomorphisms. 
Even if the sequence does not split linearly, there is an exact sequence
\begin{align*}
\dots &\to kk_*(D,C_0^\infty(0,1)\otimes_\pi A)\to kk_*(D,C_0^\infty(0,1)\otimes_\pi B)\to
\\&\to kk_*(D,C_q^\infty)\to kk_*(D,A)\to kk_*(D,B),
\end{align*}
where 
$C_q^{\infty}=\{(x,f)\in A\oplus C_0^\infty[0,1)\otimes_\pi B:\,q(x)=f(0)\}$ 
(the evaluation $f(0)$ is given by the map in \eqref{smoothtensorAexact}) 
is the smooth mapping cone. 

We can allways replace $kk_*(D,C_0^\infty(0,1)\otimes_\pi B)$ by $kk_{*-1}(D,B)$ using Bott periodicity. 
The exact sequence
\begin{equation}\label{idealincone}
0\to I\to C_q^{\infty}\to C_0^\infty[0,1)\otimes_\pi B\to 0
\end{equation}
splits, if \eqref{splitshortexactsequence} splits, and $C_0^\infty[0,1)\otimes_\pi B$ is diffeotopically contractible. 
Thus, the exact sequence associated to \eqref{idealincone} shows, that
$$kk_*(D,I)\to kk_*(D,C_q^{\infty})$$
is an isomorphism in this case. We now obtain \eqref{sixtermkk} from these pieces and the boundary map is therefore given by the composition
$$kk_{*-1}(D,B)\xrightarrow{\mathfrak{b}\otimes\_}kk_*(D,C_0^\infty(0,1)\otimes_\pi B)\to kk_*(D,C_q^{\infty})\xleftarrow{\cong}kk_*(D,I).$$

However, if we work with $K$-theory of Fréchet algebras, i.e.\ $D=\C$, then there is always a six term exact sequence, even if \eqref{splitshortexactsequence} does not split. Thus, $K_*(I)\to K_*(C_q^{\infty})$ is always an isomorphism by diffeotopy invariance of $K$-theory. So we see that the boundary map of the exact sequence 
$$\xymatrix{
K_0(I)\ar[r]&K_0(A)\ar[r]&K_0(B)\ar[d]
\\K_1(B)\ar[u]&K_1(A)\ar[l]&K_1(I)\ar[l]
}$$
can be chosen to be
$$K_{*-1}(B)\xrightarrow{\mathfrak{b}\otimes\_}K_*(C_0^\infty(0,1)\otimes_\pi B)\to K_*(C_q^{\infty})\xleftarrow{\cong}K_*(I).$$
\end{enumerate}

Now that we have reviewed some basic properties of $K$-theory of Fréchet algebras, we specialize to $\sigma$-$C^*$-algebras and prove that the remaining Axioms given in Section \ref{sec:KTheory} are satisfied.
We start with Axiom \ref{Ktheoryaxiomproduct}.
\begin{mdef}
The exterior product in $K$-theory of $\sigma$-$C^*$-algebras with respect to the maximal tensor product is defined to be the composition
$$K_i(A)\otimes K_j(B)\to K_{i+j}(A\otimes_\pi B)\to K_{i+j}(A\otimes B).$$
\end{mdef}
Associativity follows from commutativity of 
$$\xymatrix{
A\otimes_\pi B\otimes_\pi C\ar[r]\ar[d]&A\otimes_\pi(B\otimes C)\ar[d]
\\(A\otimes B)\otimes_\pi C\ar[r]&A\otimes B\otimes C.
}$$
The product is obviously graded commutative and natural in both $A$ and $B$, because it has these properties with respect to the projective tensor product.

Given a short exact sequence of $\sigma$-$C^*$-algebras
$$0\to I\to A\xrightarrow{q} B\to 0,$$
we can also define the continuous mapping cone 
$$C_q=\{(x,f)\in A\oplus C_0[0,1)\otimes B:\,q(x)=f(0)\}.$$
It contains the ideals $C_0(0,1)\otimes B$ and $I$. As $C_q/I=C_0[0,1)$ is contractible, the inclusion $I\subset C_q$ induces an isomorphism
$K_*(I)\cong K_*(C_q),$ too.

The canonical continuous homomorphisms $\iota:C_0^\infty(0,1)\to C_0(0,1)$, $C_0^\infty(0,1)\otimes_\pi B\to C_0(0,1)\otimes_\pi B\to C_0(0,1)\otimes B$ and $C_q^\infty\to C_q$ induce a commutative diagram
$$\xymatrix{
K_{*-1}(B)\ar[rr]^{\mathfrak{b}\otimes\_}\ar@{=}[d]&&K_*(C_0^\infty(0,1)\otimes_\pi B)\ar[r]\ar[d]& K_*(C_q^{\infty})\ar[dd]&\ar[l]_{\cong}K_*(I)\ar@{=}[dd]
\\K_{*-1}(B)\ar[rr]^{\iota_*(\mathfrak{b})\otimes\_}\ar@{=}[d]&&K_*(C_0(0,1)\otimes_\pi B)\ar[d]&&
\\K_{*-1}(B)\ar[rr]^{\iota_*(\mathfrak{b})\otimes\_}&&K_*(C_0(0,1)\otimes B)\ar[r]& K_*(C_q)&\ar[l]_{\cong}K_*(I)
}$$
If we use the letter $\mathfrak{b}$ for $\iota_*(\mathfrak{b})$, too, we obtain the $\sigma$-$C^*$-algebra description of the connecting homomorphism:
\begin{lem}
The connecting homomorphism in $K$-theory of $\sigma$-$C^*$-algebras associated to the short exact sequence 
$$0\to I\to A\xrightarrow{q} B\to 0$$
is the composition
$$K_{*-1}(B)\xrightarrow{\mathfrak{b}\otimes\_}K_*(C_0(0,1)\otimes B)\to K_*(C_q)\xleftarrow{\cong}K_*(I).$$
\end{lem}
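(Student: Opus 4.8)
The plan is to read the assertion off the commutative diagram displayed immediately above the lemma. Its top row is precisely the smooth description of the connecting homomorphism constructed in Appendix \ref{sec:kkTheory}, specialized to the first variable $D=\C$: namely $K_{*-1}(B)\xrightarrow{\mathfrak{b}\otimes\_}K_*(C_0^\infty(0,1)\otimes_\pi B)\to K_*(C_q^{\infty})\xleftarrow{\cong}K_*(I)$. Since the $K$-theory of a $\sigma$-$C^*$-algebra is, via the natural isomorphism $kk_*(\C,\_)\cong K_*$, computed by viewing it as a Fréchet algebra, this top composition genuinely \emph{is} the boundary map of Axiom \ref{Ktheoryaxiomsequence} attached to $0\to I\to A\xrightarrow{q}B\to 0$. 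The bottom row is the continuous description in the statement. Hence it suffices to verify that every square of the diagram commutes and that the two outermost vertical columns are identities; the resulting equality of the top and bottom horizontal compositions is exactly the claim.

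First I would dispatch the two left-hand squares. The canonical homomorphisms $\iota\colon C_0^\infty(0,1)\to C_0(0,1)$ and the chain $C_0^\infty(0,1)\otimes_\pi B\to C_0(0,1)\otimes_\pi B\to C_0(0,1)\otimes B$ carry $\mathfrak{b}$ to $\iota_*(\mathfrak{b})$, and the exterior product is natural in its first variable by Axiom \ref{Ktheoryaxiomproduct}; therefore the image of $\mathfrak{b}\otimes x$ under the left verticals is $\iota_*(\mathfrak{b})\otimes x$, which is what the lower horizontal arrows produce. Next I would treat the middle squares, where commutativity is functoriality of $K_*$ applied to $C_q^\infty\to C_q$ together with the fact that this cone map is, by construction, compatible with the inclusions of $C_0^\infty(0,1)\otimes_\pi B$, $C_0(0,1)\otimes_\pi B$ and $C_0(0,1)\otimes B$ as ideals in the respective mapping cones. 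Finally, the right-hand squares commute because the isomorphisms $K_*(I)\xrightarrow{\cong}K_*(C_q^\infty)$ and $K_*(I)\xrightarrow{\cong}K_*(C_q)$ are both induced by the inclusion of $I$, and these inclusions are intertwined by $C_q^\infty\to C_q$.

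The one genuinely new input, and the step I expect to require the most care, is that the \emph{continuous} cone map induces an isomorphism $K_*(I)\xrightarrow{\cong}K_*(C_q)$. In the smooth picture this is already recorded around \eqref{idealincone}: $C_0^\infty[0,1)\otimes_\pi B$ is diffeotopically contractible, so the six-term sequence associated to \eqref{idealincone} forces $K_*(I)\to K_*(C_q^\infty)$ to be invertible. In the continuous picture I would instead apply the six-term exact sequence of Axiom \ref{Ktheoryaxiomsequence} to $0\to I\to C_q\to C_0[0,1)\otimes B\to 0$ and use that $C_0[0,1)$, hence $C_0[0,1)\otimes B$, is contractible, so that $K_*(I)\to K_*(C_q)$ is an isomorphism as well. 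Everything else is naturality and functoriality bookkeeping; once the square-by-square commutativity and this second cone isomorphism are in place, the outer columns being identities lets one simply read the bottom composition as the connecting homomorphism, proving the lemma.
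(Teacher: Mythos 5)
Your proposal is correct and follows essentially the same route as the paper: the lemma is read off the commutative diagram comparing the smooth ($kk$-theoretic) description of the boundary map with the continuous one, with the only substantive new input being that $I\subset C_q$ induces an isomorphism on $K$-theory because $C_q/I\cong C_0[0,1)\otimes B$ is contractible. The square-by-square naturality checks you describe are exactly the ones implicit in the paper's construction of that diagram.
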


Proving compatibility of boundary maps and exterior products as mentioned in Axiom \ref{Ktheorypropertyboundary} is now straightforward:
Consider the diagram
{\small
$$\xymatrix@C=0pc{
K_1(C_0(0,1))\otimes K_i(B)\otimes K_j(D)\ar[d]\ar[rr]&&K_1(C_0(0,1))\otimes K_{i+j}(B\otimes D)\ar[d]
\\K_{i+1}(C_0(0,1)\otimes B)\otimes K_j(D)\ar[rr]\ar[d]&&K_{i+j+1}(C_0(0,1)\otimes B\otimes D)\ar[d]\ar[dl]
\\K_{i+1}(C_q)\otimes K_j(D)\ar[r]&K_{i+1}(C_q\otimes D)\ar[r]&K_{i+j+1}(C_{q\otimes\id_D})
\\K_{i+1}(I)\otimes K_j(D)\ar[u]^{\cong}\ar[rr]&&K_{i+j+1}(I\otimes D).\ar[u]^\cong\ar[ul]
}$$}
The upper square commutes by associativity of the external product, the other two quadrilaterals commute by functoriality of the external product under the canonical inclusions $C_0(0,1)\otimes B\subset C_q$ and $I\subset C_q$ and the triangles commute already on the level of homomorphisms.

This proves commutativity of
$$\xymatrix{
K_i(B)\otimes K_j(D)\ar[r]\ar[d]& K_{i+1}(I)\otimes K_j(D)\ar[d]
\\K_{i+j}(B\otimes D)\ar[r]&K_{i+j+1}(I\otimes D).
}$$
Commutativity of 
$$\xymatrix{
K_i(D)\otimes K_j(B)\ar[r]\ar[d]& K_{i}(D)\otimes K_{j+1}(I)\ar[d]
\\K_{i+j}(D\otimes B)\ar[r]&K_{i+j+1}(D\otimes I)
}$$
up to a sign $(-1)^i$ follows from the commutativity of the first  by graded commutativity of the external product.

%%%%%%%%%%%%%%%%%%%%%%%%%%%%%%%%%%%%%%%%%%%%%%%%%%%%%%%%

{
\bigskip

\noindent
\textsc{Institut f\"ur Mathematik, Universit\"at Augsburg, D-86135 Augsburg, Germany }

\noindent
\textit{E-mail address:} \textsf{christopher.wulff@math.uni-augsburg.de}

}


\begin{thebibliography}{{Cun}97}

\bibitem[BH99]{BriHae}
Martin~R. Bridson and Andr{\'e} Haefliger.
\newblock {\em Metric spaces of non-positive curvature}, volume 319 of {\em
  Grundlehren der Mathematischen Wissenschaften [Fundamental Principles of
  Mathematical Sciences]}.
\newblock Springer-Verlag, Berlin, 1999.

\bibitem[{Cun}97]{CunBiv}
Joachim {Cuntz}.
\newblock {Bivariante $K$-Theorie f\"ur lokalconvexe Algebren und der
  Chern-Connes-Charakter.}
\newblock {\em {Doc. Math., J. DMV}}, 2:139--182, 1997.

\bibitem[EM06]{EmeMey}
Heath Emerson and Ralf Meyer.
\newblock Dualizing the coarse assembly map.
\newblock {\em J. Inst. Math. Jussieu}, 5(2):161--186, 2006.

\bibitem[Gro87]{GroHyperbolic}
M.~Gromov.
\newblock Hyperbolic groups.
\newblock In {\em Essays in group theory}, volume~8 of {\em Math. Sci. Res.
  Inst. Publ.}, pages 75--263. Springer, New York, 1987.

\bibitem[HG04]{HigGue}
Nigel Higson and Erik Guentner.
\newblock Group {$C^\ast$}-algebras and {$K$}-theory.
\newblock In {\em Noncommutative geometry}, volume 1831 of {\em Lecture Notes
  in Math.}, pages 137--251. Springer, Berlin, 2004.

\bibitem[HR00]{HigRoe}
Nigel Higson and John Roe.
\newblock {\em Analytic {$K$}-homology}.
\newblock Oxford Mathematical Monographs. Oxford University Press, Oxford,
  2000.
\newblock Oxford Science Publications.

\bibitem[Phi88]{PhiInv}
N.~Christopher Phillips.
\newblock Inverse limits of {$C^*$}-algebras.
\newblock {\em J. Operator Theory}, 19(1):159--195, 1988.

\bibitem[{Phi}89]{PhiRep}
N.Christopher {Phillips}.
\newblock {Representable K-theory for $\sigma$-C${}\sp*$-algebras.}
\newblock {\em {$K$-Theory}}, 3(5):441--478, 1989.

\bibitem[{Phi}91]{PhiFre}
N.Christopher {Phillips}.
\newblock {$K$-theory for Fr\'echet algebras.}
\newblock {\em {Int. J. Math.}}, 2(1):77--129, 1991.

\bibitem[Roe95]{RoeFoliations}
John Roe.
\newblock From foliations to coarse geometry and back.
\newblock In {\em Analysis and geometry in foliated manifolds ({S}antiago de
  {C}ompostela, 1994)}, pages 195--205. World Sci. Publ., River Edge, NJ, 1995.

\bibitem[Roe96]{RoeITCGTM}
John Roe.
\newblock {\em Index theory, coarse geometry, and topology of manifolds},
  volume~90 of {\em CBMS Regional Conference Series in Mathematics}.
\newblock Published for the Conference Board of the Mathematical Sciences,
  Washington, DC; by the American Mathematical Society, Providence, RI, 1996.

\bibitem[Roe03]{RoeCoarseGeometry}
John Roe.
\newblock {\em Lectures on coarse geometry}, volume~31 of {\em University
  Lecture Series}.
\newblock American Mathematical Society, Providence, RI, 2003.

\bibitem[Tr{\`e}67]{Treves}
Fran{\c{c}}ois Tr{\`e}ves.
\newblock {\em Topological vector spaces, distributions and kernels}.
\newblock Academic Press, New York-London, 1967.

\end{thebibliography}
\end{document}